\newtheorem{prop}{Proposition}
\newtheorem{lemma}{Lemma}
\newtheorem{rem}{Remark}
\newtheorem{defi}{Definition}
\newtheorem{thm}{Theorem}
\numberwithin{equation}{section}
\numberwithin{prop}{section}
\numberwithin{lemma}{section}
\numberwithin{rem}{section}
\numberwithin{thm}{section}
\numberwithin{defi}{section}
\begin{document}

\title{Killing $2$-forms in dimension $4$}
\author{Paul Gauduchon and Andrei Moroianu}
\address{Paul Gauduchon \\ CMLS\\ {\'E}cole
  Polytechnique \\ UMR 7640 du CNRS
\\ 91128 Palaiseau \\ France}
\email{paul.gauduchon@polytechnique.edu}

\address{Andrei Moroianu \\ Universit\'e de Versailles-St Quentin \\
Laboratoire de Math\'ema\-tiques \\ UMR 8100 du CNRS\\
45 avenue des \'Etats-Unis\\
78035 Versailles, France }
\email{andrei.moroianu@math.cnrs.fr}
\date{\today}
\maketitle

\tableofcontents

\section{Introduction}

 On any $n$-dimensional Riemannian manifold $(M, g)$, an exterior $p$-form $\psi$ is called {\it conformal Killing}\footnote{Conformal Killing forms have the following conformal invariance property: if $\psi$ is a conformal Killing $p$-form with respect to the metric $g$, then, for any positive function $f$, $\tilde{\psi} := f ^{p + 1} \, \psi$ is conformal Killing with respect to the conformal metric $\tilde{g} := f ^2 \, g$. In other words, if $L$ denotes the real line bundle $|\Lambda ^n TM| ^{\frac{1}{n}}$ and $\ell, \tilde{\ell}$ denote 
the sections of $L$ determined by $g, \tilde{g}$,    then, for {\it any} Weyl connection $D$ relative to the conformal class $[g]$,  the section ${\bm \psi} := \psi \otimes \ell ^{p + 1} = \tilde{\psi} \otimes \tilde{\ell} ^{p + 1}$ of $\Lambda ^p T ^* M \otimes L ^{p + 1}$ satisfies
\begin{equation} D _X {\bm \psi} = {\bm \alpha} \wedge X + X \lrcorner {\bm \beta}, \end{equation}
for some section ${\bm \alpha}$ of $\Lambda ^{p - 1}  T^* M \otimes L ^{p - 1}$ and some section ${\bm \beta}$ of $\Lambda ^{p + 1} T^* M \otimes L ^{p + 1}$ (depending on $D$),  cf. {\it e.g.} \cite[Appendix B]{ACG2}.} \cite{uwe} if its covariant derivative $\nabla  \psi$  is of the form
\begin{equation} \label{conf-killing} \nabla  _X \psi = \alpha \wedge X ^{\flat} + X \lrcorner \beta, \end{equation}
for some $(p - 1)$-form $\alpha$ and some $(p + 1)$-form $\beta$, which are then given by
\begin{equation} \alpha = \frac{(- 1) ^p}{n - p + 1} \, \delta \psi, \qquad \beta = \frac{1}{p + 1} \, d \psi. \end{equation}

The $p$-form $\psi$ is called {\it Killing}, resp. $*$-{\it Killing}, with respect to $g$, if $\psi$ satisfies 
(\ref{conf-killing}) {\it and}  $\alpha = 0$, resp. $\beta = 0$. In particular, Killing forms are  co-closed, $*$-Killing forms are closed, and, if $M$ is oriented and $*$ denotes the induced Hodge star operator, $\psi$ is Killing if and only if $* \psi$ is $*$-Killing. 

\smallskip

\smallskip

Although the terminology comes from the fact that Killing $1$-forms are just metric duals of Killing vector fields, and thus encode infinitesimal symmetries of the metric, no geometric interpretation of Killing $p$-forms exists in general in terms of symmetries when  $p \geq 2$, except in the case of Killing $2$-forms in dimension $4$, which is special for various reasons, the most important  being the self-duality phenomenon. 

On any oriented four-dimensional manifold $(M, g)$, the Hodge star operator $*$, acting on $2$-forms, is an involution and, therefore, induces the well known orthogonal decomposition
\begin{equation} \label{split-lambda} \Lambda ^2 M = \Lambda ^+ M \oplus \Lambda ^- M, \end{equation}
where $\Lambda ^2 M$ stands for the vector bundle of (real) $2$-forms on $M$ and $\Lambda ^{\pm} M$ the eigen-subbundle for the eigenvalue $\pm 1$ of $*$. 
Accordingly, any $2$-form $\psi$ splits as 
\begin{equation} \label{split-psi} \psi = \psi _+ + \psi _-, \end{equation}
 where $\psi _+$, resp. $\psi _-$, is the {\it self-dual}, resp. the {\it anti-self-dual} part of $\psi$, defined by $\psi _{\pm}  = \frac{1}{2} (\psi \pm  * \psi)$. Since $*$ acting on $2$-forms is conformally invariant, a $2$-form 
$\psi$ is conformal Killing if and only if $\psi _+$ and $\psi _-$ are separately conformal Killing, meaning that 
\begin{equation} \label{conf-killing+} \nabla  \psi _+ = (\alpha _+ \wedge X ^{\flat}) _+, \qquad 
\nabla  \psi _- = (\alpha _- \wedge X ^{\flat}) _- \end{equation}
 for some real $1$-forms $\alpha _+, \alpha _-$, and $\psi$ is Killing, resp. $*$-Killing, if, in addition, 
\begin{equation} \label{coupling} \alpha _+ = -  \alpha _-, \quad \text{resp.} \quad  \alpha _+ =  \alpha _-. \end{equation}

\smallskip

Throughout  this paper, $(M, g)$ will denote  a connected,  oriented, $4$-dimen\-sional Riemannian  manifold and $\psi = \psi _+ + \psi _-$ a non-trivial $*$-Killing $2$-form on $M$  (the choice of the $*$-Killing $\psi$, instead of the  Killing  $2$-form $* \psi$ is of pure convenience). We also discard the non-interesting case when $\psi$ is parallel.

\smallskip

On the open set, $M _0 ^+$, resp. $M ^- _0$, where $\psi _+$, resp. $\psi _-$,  has no zero, the associated skew-symmetric operators $\Psi _+, \Psi _-$, are of the form $\Psi _+ = f _+ \, J _+$, resp. $\Psi _- = f _- \, J _-$, where $J _+$, resp. $J _-$,  is an almost complex structure inducing the chosen, resp. the opposite, orientation of $M$,  and $f _+$, resp. $f _-$,   is a positive function. It is then easily checked, cf. Section \ref{sambikaehler} below, that the 
first, resp. the second,  condition in (\ref{conf-killing+}) is equivalent to the condition that  the pair 
$(g _+ := f _+ ^{-2} \, g, J _+)$, resp. the pair $(g _- := f _- ^{-2} \, g, J _-)$, is {\it K\"ahler}. On the open set $M _0 = M ^+ _0 \cap M ^- _0$, which is actually dense in $M$, cf. Lemma 
\ref{lemma-M0dense} below, we thus get {\it two} K\"ahler structures, whose metrics belong to the same conformal class and whose complex structures induce opposite orientations (in particular, commute), hence an {\it ambik\"ahler structure}, as defined in \cite{ACG2}. This actually holds if $\psi$ is simply conformal Killing and had been observed in the twistorial setting by M. Pontecorvo in \cite{pontecorvo}, cf. also \cite[Appendix B2]{ACG2}.  The additional coupling condition (\ref{coupling}), which, on $M _0$, reads $J _+  d f _+ = J _- d f _-$, 
cf. Section  \ref{sambikaehler}, has  then
strong additional consequences, that we now explain.   
\smallskip

A first main observation, cf. Proposition \ref{prop-f}, is that the open subset, $M _S$, where $\psi$ is of maximal rank, hence a symplectic $2$-form,  is either empty or dense in $M$. 
\smallskip

The case when $M _S$ is empty is the case when $\psi$ is 
{\it decomposable}, {\it i.e.}  $\psi \wedge \psi = 0$ everywhere; 
equivalently, $|\psi _+| = |\psi _-|$  everywhere;  on $M _0$, we then have $f _+ = f _-$, hence  $g _+ = g _- =: g _K$, and $(M _0, g _K)$ is locally a product of two (real) K\"ahler surfaces $(\Sigma, g _{\Sigma}, \omega _{\Sigma})$ and 
$(\tilde{\Sigma}, g _{\tilde{\Sigma}}, \omega _{\tilde{\Sigma}})$, whereas  $f _+ = f _-$ is constant on $\tilde{\Sigma}$, cf. Section \ref{sdecomposable}. In this case, no non-trivial Killing vector field shows up in general, but a number of compact examples involving Killing vector fields are provided, coming from  \cite{am}. 
\smallskip
 
The case when $M _S$ is dense is first handled with in Proposition \ref{prop-killing}, where we show that the vector 
field $K _1 := - \frac{1}{2} \, 
 \alpha ^{\sharp}$ is then Killing with respect to $g$ --- the chosen normalization is for further convenience --- and that each  eigenvalue of the   Ricci tensor, ${\rm Ric}$,  of $g$ is of multiplicity at least 2; moreover, on the (dense) open set $M _1 = M _S \cap M _0$, $K _1$ 
is Killing with respect to $g _+, g _-$ and Hamiltonian with respect to the K\"ahler forms $\omega _+ := g _+ (J _+ \cdot, \cdot)$ and $\omega _- := 
g _- (J_- \cdot, \cdot)$, whereas ${\rm Ric}$ is both $J _+$- and $J _-$-invariant, cf. Proposition \ref{prop-killing} below. 
On $M _1$, the ambik\"ahler structure $(g _+, J _+, \omega _+)$, $(g _-, J _-, \omega _-)$ is then of the type described in Proposition 11 (iii) 
of \cite{ACG2}. 
\smallskip

In Section \ref{sorthogonal}, we set the stage for a  {\it separation of variables} by introducing new functions $x, y$, defined by $x = \frac{1}{2} (f _+ + f _-)$ and
$y = \frac{1}{2} (f _+ - f _-)$, which, up to a factor $2$, are the ``eigenvalues'' of  $\psi$, and whose gradients are easily shown to be orthogonal. In Proposition \ref{prop-separate}, we show that $|d x| ^2 = A (x)$ and $|d y| ^2 = B (y)$, for some positive functions $A$ and $B$ of one variable. In terms of the new functions $x, y$, the dual $1$-form of $K _1$ with respect to $g$ is simply 
$ J _+ d x + J _+ d y$, whereas in Proposition \ref{prop-killingvf}  a second Killing vector field, $K _2$, shows up, whose dual $1$-form is 
$y ^2 \, J _+ d x + x ^2 \, J _+ dy$ and which turns out to coincide, up to a constant factor, with the Killing vector field  constructed  by W. Jelonek in \cite[Lemma B]{jelonek}, cf. also the proof of Proposition 11 in \cite{ACG2},  namely the image of $K _1$ by the {\it Killing symmetric endomorphism}  $S = \Psi _+ \circ \Psi _- + \frac{(f _+ ^2 + f _- ^2)}{2} {\rm I}$, cf. Remark \ref{rem-jelonek}. 
\smallskip

In Proposition \ref{prop-f}, we then show that either $K _2$ is a (positive) constant  multiple of $K _1$, and we end up with an ambik\"ahler structure {\it of Calabi type}, according to Definition \ref{defi-calabi} taken from \cite{ACG1}, or $K _1, K _2$ are independent on a dense open subset of $M$, determining  an {\it ambitoric structure}, as defined in \cite{ACG2}, \cite{ACG2bis}. 
\smallskip

The Calabi  case is considered in Section \ref{scalabi}, where it is shown that, conversely, any ambik\"ahler structure of Calabi type gives rise, up to scaling, to a $1$-parameter family of pairs $(g ^{(k)}, \psi ^{(k)})$, where  $g ^{(k)}$ is a Riemannian metric in the conformal class and $\psi ^{(k)}$ a $*$-Killing $2$-form with respect to $g ^{(k)}$, cf. Theorem \ref{thm-calabi} and Remark \ref{rem-k}. The example of {\it Hirzebruch-like} ruled surfaces is described in Section \ref{shirzebruch}. 
\smallskip

The ambitoric case is the case when $d x$ and $d y$ are independent on a dense open subset of $M$. In Section \ref{sambitoric}, we show that $x, y$ can be locally completed into a full system of coordinates by the addition of two  ``angular coordinates'', $s, t$, in such a way that $K _1 = \frac{\partial}{\partial s}$ and $K _2 = \frac{\partial}{\partial t}$ and giving rise to a general Ansatz, described in Theorem \ref{thm-ambitoric}. As an Ansatz for the underlying ambik\"ahler structure, this turns out to  be the same as the ambitoric Ansatz of  Proposition  13 in \cite{ACG2} 
for the ``quadratic'' polynomial $q (z) = 2 z$, 
hence  in the {\it hyperbolic} normal form  of \cite[Section 5.4]{ACG2}, when the functions $x, y$  are identified with the {\it adapted coordinates} $x, y$ in \cite{ACG2}.  
\smallskip

The main observation at this point  is that, while the adapted  coordinates  in \cite{ACG2} are obtained via a quadratic transformation, cf. \cite[Section 4.3]{ACG2}, the functions $x, y$ are here naturally attached to the $*$-Killing $2$-form $\psi$ which determines the ambitoric structure.  This is quite reminiscent of the {\it orthotoric} situation, described in \cite{ACG1} in dimension $4$ and in \cite{ACG3} in all dimensions, where the separation of variables --- and the corresponding Ansatz --- are similarly obtained via the ``eigenvalues'' of a {\it Hamiltonian $2$-form}, which share  the same properties as the ``eigenvalues'' $x, y$ of the $*$-Killing $2$-form $\psi$.  
\smallskip

In spite of this, the $*$-Killing $2$-forms considered in this paper are {\it not} Hamiltonian $2$-forms in general --- for a general discussion  about Killing or $*$-Killing $2$-forms versus  Hamiltonian $2$-forms, cf. \cite{au}, in particular Theorem 4.5 and Proposition 4.8,  and, 
also, \cite[Appendix A]{ACG3} ---  but,  
in many respects, at least in dimension $4$, the role  played by Hamiltonian $2$-forms in the orthotoric case is played by $*$-Killing $2$-forms in the (hyperbolic) ambitoric case.
\smallskip

The three situations described above, namely the decomposable, the Calabi ambik\"ahler and the ambitoric case, cf. Proposition \ref{prop-f},   are nicely illustrated  in the example of the round 4-sphere described in Section 
\ref{ssphere}, on which  every $*$-Killing form can be written as the restriction of a constant $2$-form ${\sf a} \in\mathfrak{so}(5)\simeq\Lambda^2\mathbb{R}^5$, which is also the $2$-form associated to the covariant derivative of the Killing vector field induced by ${\sf a}$.  If ${\sf a}$ has rank $2$, the same holds for its restriction on a dense open subset of the sphere, so this corresponds to the decomposable case. Otherwise, ${\sf a}$ can be expressed as $\lambda \, e_1 \wedge e_2 + \mu \, e_3 \wedge e_4$  --- cf. Section \ref{ssphere} for the notation --- with 
$\lambda, \mu$ both positive, and,  depending on whether $\lambda$ and 
$\mu$ are equal or not,  we obtain on a dense subset of the sphere an ambik\"ahler structure of Calabi type or a hyperbolic  ambitoric structure respectively. By using the hyperbolic ambitoric Ansatz of Section 4,  it is eventually shown that the resulting $*$-Killing $2$-forms are actually $*$-Killing with respect to infinitely many non-isometric Riemannian metrics on $S ^4$, cf. Remark \ref{rem-deformation}. 

{\sc Acknowledgments.} We warmly thank Vestislav Apostolov and David Calderbank for their interest in this work and for many useful suggestions. This work was partially supported by the Procope Project No. 32977YJ.

\section{Killing $2$-forms and ambik\"ahler structures} \label{sambikaehler}

In what follows,  $(M, g)$ denotes  a connected,  oriented, $4$-dimensional Riemannian manifold admitting  a non-parallel Killing $2$-form $\varphi$, and $\psi := * \varphi$ denotes the corresponding $*$-Killing $2$-form;  we then have
\begin{equation} \label{nablapsi} \nabla _X \psi = \alpha \wedge X ^{\flat}, \end{equation} 
for some real, non-zero,  $1$-form $\alpha$, where $\nabla$ denotes the Levi-Civita connection of $g$ and $X ^{\flat}$ the dual $1$-form of  $X$ with respect to $g$, cf. \cite{uwe}.  By anti-symmetrizing and by contracting (\ref{nablapsi}), it is easily checked that $\psi$ is closed and that 
\begin{equation}\label{2.2} \delta \psi =3 \alpha, 
\end{equation}
where $\delta$ denotes the codifferential with respect to $g$.   
Denote by $\psi _+ = \frac{1}{2} (\psi + * \psi)$, resp. $\psi _- = \frac{1}{2} (\psi - * \psi)$, the self-dual, resp. the anti-self-dual, part of $\psi$, where $*$ is the Hodge operator induced by the metric $g$ and the chosen orientation. Then, (\ref{nablapsi}) is equivalent to the following two conditions 
\begin{equation} \begin{split} 
& \nabla _X \psi _+ = \big(\alpha \wedge X ^{\flat} \big) _+ = \frac{1}{2} 
\alpha \wedge X ^{\flat} + \frac{1}{2} X \lrcorner* \alpha, \\ & 
\nabla _X \psi _- = \big(\alpha \wedge X ^{\flat} \big) _- = \frac{1}{2} 
\alpha \wedge X ^{\flat} -  \frac{1}{2} X \lrcorner  * \alpha. \end{split}
\end{equation}
Here, we used the general identity:
\begin{equation} \label{identity} * (X ^{\flat} \wedge \phi) = (- 1) ^p \, X \lrcorner  * \phi, \end{equation}
for any vector field $X$ and any $p$-form $\phi$ on any oriented Riemannian manifold. 
In particular, $\psi _+$ and $\psi _-$ are  {\it conformally Killing}, cf. \cite{uwe}. The datum of a (non-parallel) $*$-Killing $2$-form $\psi$ on $(M, g)$ is then equivalent to the datum of a pair $(\psi _+, \psi _-)$ consisting 
of a  self-dual $2$-form $\psi _+$ and an anti-self-dual $2$-form $\psi _-$, both conformally Killing and linked together by
\begin{equation} d \psi _+  + d \psi _- = 0, \end{equation}
or, equivalently, by
\begin{equation} \delta  \psi _+  = \delta  \psi _-. \end{equation}

We denote by $\Psi$, $\Psi _+$, $\Psi _-$ the anti-symmetric endomorphisms of $T M$ associated to $\psi$, $\psi _+$, $\psi _-$ respectively via the metric $g$, so that
$g (\Psi (X), Y) = \psi (X, Y)$, $g (\Psi _+ (X), Y) = \psi _+ (X, Y)$, 
$g (\Psi _- (X), Y) = \psi _- (X, Y)$. On the open set, $M _0$, of $M$ where $\Psi _+$ and $\Psi _-$ have no zero, denote by $J _+$, $J _-$ the corresponding almost complex structures:
\begin{equation} \label{J+-} J _+ := \frac{\Psi _+}{f_+},  \qquad 
J _- := \frac{\Psi _-}{f_-}, \end{equation}
where the positive functions $f_+, f_-$ are defined by
\begin{equation} \label{f+-} f _+ := \frac{|\Psi _+|}{\sqrt{2}}, \qquad f _- := 
\frac{|\Psi _-|}{\sqrt{2}} \end{equation}
(here, the norms $|\Psi _+|$, $|\Psi _-|$, are relative to the conformally invariant inner product defined on the space of anti-symmetric endomorphisms of $TM$  by $(A, B) := - \frac{1}{2} {\rm tr}{(A \circ B)}$); 
the open set $M _0$ is then defined by the condition
\begin{equation} \label{M0} f _+ > 0, \qquad f _- >  0. \end{equation}
Notice that $J _+$ and $J _-$ induce opposite orientations, hence commute to each other, so that the endomorphism
\begin{equation} \label{tau} \tau := - J _+ J _- = - J _- J _+, \end{equation}
is an involution of the tangent bundle of $M _0$.  
\smallskip

From (\ref{nablapsi}), we get 
\begin{equation} \label{nablaA} \nabla _X \Psi = \alpha \wedge X, \end{equation}
with the following general convention: for any $1$-form $\alpha$ and any vector field $X$,  $\alpha \wedge X$ denotes the anti-symmetric endomorphism of $T M$ defined by $(\alpha \wedge X) (Y) = \alpha (Y) X - g (X, Y) \alpha ^{\sharp}$, where $\alpha ^{\sharp}$ is the dual vector field to $\alpha$ relative to $g$  (notice that the latter expression is actually independent of $g$ in the conformal class $[g]$ of $g$). Equivalently:
\begin{equation} \label{nablaA+-} \nabla _X \Psi _+ 
= (\alpha \wedge X) _+, \qquad  
\nabla _X \Psi _- = (\alpha \wedge X) _-. \end{equation}
We infer 
$(\nabla _X \Psi _+, \Psi _+) = \frac{1}{2} (d |\Psi _+| ^2) (X) = (\Psi _+, \alpha \wedge X) = \big(\Psi _+ (\alpha)\big) (X)$, hence $\Psi _+ (\alpha) = \frac{1}{2} d |\Psi _+| ^2$. Similarly, $\Psi _- (\alpha) = \frac{1}{2} d |\Psi _-| ^2$. 
By using (\ref{J+-}), we then get
\begin{equation} \label{alpha-J} \begin{split} & \alpha \,  =  - 2 \Psi _+ \left(\frac{d |\Psi _+|}{|\Psi _+|}\right) = - 2 J _+ d f _+ \\ 
& \quad  = - 2 \Psi _- \left(\frac{d |\Psi _-|}{|\Psi _-|}\right) = - 2 J _- df _-. \end{split} \end{equation}
In particular, 
\begin{equation} \label{Jdf} J _+  d f_+ = J _- d f _-. \end{equation}
\begin{rem} \label{rem-Skilling} {\rm For any $*$-Killing $2$-form $\psi$ as above, denote by $\Phi = \Psi _+ - \Psi _-$ the skew-symmetric endomorphism associated to the Killing $2$-form $\varphi = * \psi$  and by $S$ the symmetric endomorphism defined by
\begin{equation} \label{S} S = - \frac{1}{2} \, \Phi \circ \Phi =   \Psi _+ \circ \Psi _- + \frac{1}{2} (f _+ ^2 + f _- ^2)  \, {\rm I} =  \frac{1}{2} \, \Psi \circ \Psi +  (f _+ ^2 + f _- ^2) \, {\rm I},  \end{equation}
where ${\rm I}$ denotes the identity of $TM$. Then, $S$  is {\it Killing} with respect to $g$, meaning that the symmetric part of $\nabla S$ is zero or, 
equivalently, that $g ((\nabla _X S) X, X) = 0$ for any vector field $X$, cf. 
\cite{pw}, \cite[Appendix B]{ACG2}. This readily follows from the fact that 
$\nabla _X \Phi (X) = X \lrcorner * (\alpha \wedge X) = 0$, so that $g (\nabla _X S (X), X) = - 2 g (\nabla _X \Phi (X), \Phi (X)) = 0$, for any vector field $X$.}
\end{rem}

\begin{lemma} \label{lemma-M0dense}  The open subset $M _0$ defined by {\rm \eqref{M0}} is dense in $M$.
\end{lemma}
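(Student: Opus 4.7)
The plan is to treat $\psi_+$ and $\psi_-$ separately and show that each of the open sets $M_0^+ := \{p \in M : \psi_+(p) \neq 0\}$ and $M_0^- := \{p \in M : \psi_-(p) \neq 0\}$ is dense in $M$; then $M_0 = M_0^+ \cap M_0^-$ is open and dense as the intersection of two open dense subsets of a connected manifold.

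I would argue density of $M_0^+$ by contradiction: suppose $\psi_+$ vanishes identically on some non-empty open set $U$. Then $\nabla \psi_+ \equiv 0$ on $U$, and the self-dual part of the basic identity (\ref{nablaA+-}) forces $(\alpha \wedge X^\flat)_+ = 0$ on $U$ for every vector field $X$. At this point I would use the following pointwise linear-algebra observation: any decomposable $2$-form $\omega$ in dimension four satisfies $\omega \wedge \omega = 0$, which, via $\omega \wedge \omega = (|\omega_+|^2 - |\omega_-|^2)\,\mathrm{vol}_g$, is equivalent to $|\omega_+| = |\omega_-|$. Applying this to $\omega = \alpha \wedge X^\flat$ with $X \neq 0$ chosen orthogonal to $\alpha^\sharp$, any non-vanishing of $\alpha(p)$ would produce a non-zero $\omega$ with $\omega_+ \neq 0$, a contradiction. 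Hence $\alpha \equiv 0$ on $U$, and (\ref{nablapsi}) gives $\nabla \psi \equiv 0$ on $U$.

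To globalize, I would invoke the unique continuation property for conformal Killing $p$-forms, which are known to correspond bijectively with parallel sections of a canonical connection on a finite-rank vector bundle, and are therefore determined by their full jet at a single point, cf.\ \cite{uwe}. Since $\psi_+$ is a conformal Killing $2$-form vanishing on $U$, this yields $\psi_+ \equiv 0$ on $M$. Consequently $\psi = \psi_-$ is anti-self-dual everywhere, and the same linear-algebra observation applied now at \emph{every} point of $M$ forces $\alpha \equiv 0$ on $M$, so $\nabla \psi \equiv 0$, contradicting the standing assumption that $\psi$ is non-parallel. The argument for $M_0^-$ is entirely symmetric, with the role of the self-dual projector replaced by the anti-self-dual one.

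The only non-routine ingredient is the unique continuation step invoked from \cite{uwe}; everything else is elementary linear algebra combined with the relations (\ref{nablapsi})--(\ref{nablaA+-}) already at our disposal. An alternative, self-contained route would avoid the conformal Killing prolongation by instead applying unique continuation directly to $\psi$ as a solution of the $*$-Killing overdetermined system $\nabla\psi = \alpha \wedge X^\flat$, combined with the fact, established above, that both $\psi$ and $\alpha$ are already $\nabla$-parallel (with $\alpha \equiv 0$) on $U$.
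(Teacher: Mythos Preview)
Your proof is correct and follows essentially the same route as the paper: split $M_0 = M_0^+ \cap M_0^-$, argue each factor is dense by contradiction, invoke the unique continuation property of conformal Killing forms from \cite{uwe} to pass from $\psi_{\pm} = 0$ on an open set to $\psi_{\pm} \equiv 0$ on $M$, and then deduce $\alpha \equiv 0$, contradicting non-parallelism. The only difference is cosmetic: your intermediate step showing $\alpha = 0$ on $U$ before applying unique continuation is unnecessary (you re-derive the same conclusion globally afterwards), and the paper leaves the pointwise implication $(\alpha \wedge X^\flat)_{\pm} = 0 \Rightarrow \alpha = 0$ implicit rather than spelling out the decomposable-form linear algebra.
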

\begin{proof} Denote by $M_0 ^{\pm}$ the open set where $f _{\pm} \neq 0$, so that $M _0 = M_0 ^{+} \cap M_0 ^{-}$. It is sufficient to show that each $M ^{\pm}_0$ 
is dense. If not, $f _{\pm} = 0$ on some non-empty open set, $V$,  of $M$, so that  $\psi _{\pm} = 0$ 
on $V$, hence is identically zero, since $\psi _{\pm}$ is conformally Killing, cf. \cite{uwe}; this, in turn, implies that $\alpha$, hence also $\nabla \psi$,  is identically zero, in contradiction to  the hypothesis that $\psi$ is non-parallel.  
\end{proof}
In view of the next proposition, we recall  the following definition, taken from \cite{ACG2}:
\begin{defi}[\cite{ACG2}] \label{defi-ambikaehler} {\rm An {\it ambik\"ahler structure} on an oriented $4$-manifold $M$ consists of a pair of K\"ahler structures, $\big(g_+, J _+, \omega _+ = g_+ (J_+ \cdot, \cdot)\big)$ and $\big(g_-, J_-, \omega _- = g_- (J_- \cdot, \cdot)\big)$, where the Riemannian metrics $g _+, g _-$ belong to the same conformal class, i.e. $g _- = f ^2 \, g _+$, for some positive function $f$,  and the complex structure $J _+$, resp. the complex structure $J _-$,  induces the chosen orientation, resp. the opposite orientation; equivalently, the K\"ahler forms $\omega _+$ and $\omega _-$ are self-dual and  anti-self-dual respectively. }
\end{defi}
We then have:
\begin{prop} \label{prop-ambikaehler} Let $(M, g)$ be a connected,  oriented, 
 $4$-dimensional Riemannian manifold, equipped with a non-parallel  $*$-Killing $2$-form $\psi = \psi _+ + \psi _-$ as above.  Then, on the dense open subset, $M _0$,  of $M$ defined by {\rm (\ref{M0})}, the pair $(g, \psi)$ gives rise to an ambik\"ahler structure  
$(g_+, J_+, \omega _+)$, $(g_-, J_-, \omega_-)$, with $g _{\pm} = f _{\pm} ^{-2} \, g$ and  $J _{\pm} = f _{\pm} ^{-1} \, \Psi _{\pm}$, by setting $f _{\pm} = |\Psi _{\pm}|/\sqrt{2}$. In particular, this ambik\"ahler structure is equipped with two non-constant positive functions $f_+, f_-$, satisfying the two conditions 
\begin{equation} \label{f} f = \frac{f_+}{f_-}, \end{equation}
and
\begin{equation} \label{taudf} \tau (df_+) = d f _-. \end{equation}

Conversely, any ambik\"ahler structure $(g_+, J_+, \omega _+)$, $(g_- = f ^2 \, g _+, J_-, \omega_-)$ equipped with two non-constant positive functions $f _+, f _-$ satisfying {\rm (\ref{f})--(\ref{taudf})} arises from a unique pair $(g, \psi)$, where $g$ is the Riemannian metric in the conformal class $[g_+] = [g _-]$ defined by
\begin{equation} \label{g} g = f _+ ^2 \, g _+ = f _- ^2 \, g _-, \end{equation}
and $\psi$ is the $*$-Killing $2$-form relative to $g$ defined by 
\begin{equation} \label{psi} \psi = f _+ ^3 \, \omega _+ + f _- ^3 \, \omega _-. \end{equation} 
\end{prop}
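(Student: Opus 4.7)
The plan is to use the conformal invariance of conformal Killing forms recalled in the introduction's footnote as a black box in both directions, so as to sidestep explicit conformal-change formulas for the Levi-Civita connection.

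\smallskip

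\textbf{Direct implication.} On $M_0$, the operator $J_+ := f_+^{-1}\Psi_+$ is $g$-orthogonal with square $-\mathrm{Id}$ (since in dimension $4$ any self-dual $2$-form satisfies $\Psi_+^2 = -f_+^2\,\mathrm{Id}$), hence it remains an orthogonal almost complex structure for the rescaled metric $g_+ := f_+^{-2}g$; one checks that the corresponding fundamental $2$-form is $\omega_+ = f_+^{-3}\psi_+$, of constant $g_+$-norm $\sqrt 2$. Applying the footnote identity with conformal factor $f_+^{-1}$, the conformal-Killingness of $\psi_+$ with respect to $g$ transfers to conformal-Killingness of $\omega_+$ with respect to $g_+$. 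Pairing the resulting conformal Killing equation with $\omega_+$, the constant-norm hypothesis kills the left-hand side, while invertibility of $J_+$ forces the defining $1$-form to vanish; hence $\omega_+$ is $g_+$-parallel and $(g_+,J_+,\omega_+)$ is K\"ahler. The analogous argument treats $(g_-,J_-,\omega_-)$. Relation~\eqref{f} is then nothing but the definition $g_\pm = f_\pm^{-2}g$, and~\eqref{taudf} follows from~\eqref{Jdf} by applying $-J_-$ to both sides and using $\tau = -J_-J_+$, $J_-^2 = -\mathrm{Id}$.

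\smallskip

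\textbf{Converse implication.} Condition~\eqref{f} ensures that $g := f_+^2 g_+ = f_-^2 g_-$ is well defined. Since $\omega_\pm$ is self-dual (resp.\ anti-self-dual) with respect to $g_\pm$, the same holds with respect to the conformally equivalent $g$; thus $\psi_\pm := f_\pm^3\omega_\pm$ is the self-dual/anti-self-dual splitting of $\psi$ for $g$. Because $\omega_\pm$ is $g_\pm$-parallel (hence trivially conformal Killing), the footnote identity applied in the reverse direction, now with conformal factor $f_\pm$, makes $\psi_\pm$ conformal Killing with respect to $g$, i.e.\ $\nabla^g_X\psi_\pm = (\alpha_\pm\wedge X^\flat)_\pm$ for some $1$-forms $\alpha_\pm$. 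Pairing this equation with $\psi_\pm$ exactly as in the derivation of~\eqref{alpha-J}, and using $|\psi_\pm|_g^2 = 2f_\pm^2$, identifies $\alpha_\pm = -2J_\pm df_\pm$. The $*$-Killing condition for $\psi = \psi_+ + \psi_-$ is $\alpha_+ = \alpha_-$, equivalent to $J_+df_+ = J_-df_-$, which is in turn equivalent to~\eqref{taudf} via $\tau = -J_-J_+$. Uniqueness of $(g,\psi)$ is immediate from the construction.

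\smallskip

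The only mildly delicate step is the constant-norm claim in the direct implication: one must verify that a conformal Killing self-dual $2$-form of constant norm is parallel. This reduces to a short linear-algebra argument, using that the skew-symmetric endomorphism associated to a nowhere-vanishing self-dual $2$-form in dimension $4$ is, up to a positive scalar factor, an almost complex structure and hence invertible on $TM$. Everything else is an application of the conformal invariance black box together with the identification of the defining $1$-form via pairing with $\psi_\pm$.
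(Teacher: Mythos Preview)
Your proof is correct and takes a genuinely different route from the paper's. The paper proceeds by direct computation: it writes down the conformal-change identity \eqref{nabla-conf} for the covariant derivative of a skew endomorphism, together with the explicit expressions \eqref{betaX+}--\eqref{betaX-} for $(\beta\wedge X)_\pm$, and from the $*$-Killing equation computes $\nabla_X J_+ = \bigl[\tfrac{df_+}{f_+}\wedge X,\,J_+\bigr]$, which via \eqref{nabla-conf} is exactly $\nabla^{g_+}J_+ = 0$; the converse is handled by running the same computation backwards. You instead invoke the conformal invariance of conformal Killing forms as a black box in both directions, and replace the explicit calculation by the observation that a self-dual conformal Killing $2$-form of constant nonzero norm is automatically parallel, since pairing the conformal Killing equation with the form itself places the defining $1$-form in the kernel of the invertible $J_+$.

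Your argument is cleaner and more conceptual, and the constant-norm-implies-parallel step is a pleasant reusable lemma. The paper's hands-on approach, however, produces the intermediate identities \eqref{nablaJ+}--\eqref{nablaJ-} explicitly, and these are reused later (for instance in deriving \eqref{nabladf+-} in the proof of Proposition~\ref{prop-separate}); so within the paper the direct computation is not wasted effort, whereas your route would require establishing those formulas separately when they are needed downstream.
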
 
\begin{proof} Before starting the proof, we recall the following general facts. (i) For any two Riemannian metrics, $g$ and $\tilde{g} = \varphi ^{-2} \, g$, in a same conformal class, and for any anti-symmetric endomorphism, $A$, of the tangent bundle with respect to the conformal class $[g] = [\tilde{g}]$,  the covariant derivatives $\nabla ^{\tilde{g}} A$ and $\nabla ^{g} A$ are related by
\begin{equation} \label{nabla-conf} \nabla ^{\tilde{g}} _X A = \nabla ^g _X A + \left[A, \frac{d \varphi}{\varphi} \wedge X\right] = A \left(\frac{d \varphi}{\varphi}\right) \wedge X + \frac{d \varphi}{\varphi} \wedge A (X),  \end{equation} 
by setting  $A \left(\frac{d \varphi}{\varphi}\right) = - \frac{d \varphi}{\varphi} \circ A$. 
(ii) For any $1$-form $\beta$ and any vector field $X$, we have
\begin{equation} \label{betaX+} \begin{split} (\beta \wedge X) _+ & = \frac{1}{2} \beta \wedge X - \frac{1}{2} J _+ \beta \wedge J _+ X -  \frac{1}{2} \beta (J _+ X) \, J _+ \\ & = \frac{1}{2} \beta \wedge X +  \frac{1}{2} J _- \beta \wedge J _- X + \frac{1}{2} \beta (J _-  X) \, J _-, \end{split} \end{equation}
and
\begin{equation} \label{betaX-} \begin{split} (\beta \wedge X) _- & = \frac{1}{2} \beta \wedge X - \frac{1}{2} J _- \beta \wedge J _- X -  \frac{1}{2} 
\beta (J _- X) \, J _- \\ & = \frac{1}{2} \beta \wedge X +  \frac{1}{2} J _+ \beta \wedge J _+ X + \frac{1}{2} \beta (J _+ X) \, J _+, \end{split} \end{equation}
for {\it any} orthogonal (almost) complex structures $J _+$ and $J -$ inducing the chosen and the opposite orientation respectively. 
\smallskip

From (\ref{J+-}), (\ref{nablaA+-}), (\ref{alpha-J}) and (\ref{betaX+}), we thus infer
\begin{equation} \label{nablaJ+} \begin{split} \nabla _X J _+ & 
= - 2 \left(J _+ \left(\frac{d f_+}{|f_+|}\right) \wedge X\right) _+ 
- \frac{d f_+}{f_+} (X) \, J _+ \\ & = 
- J _+ \left(\frac{d f_+}{f_+}\right) \wedge X - \frac{d f_+}{f_+} \wedge J_+ X + \frac{d f_+}{f_+} (X) \, J _+ - \frac{d f_+}{f_+} (X) \, J _+ 
\\ & = - J _+ \left(\frac{d f_+}{f_+}\right) \wedge X - \frac{d f_+}{f_+} \wedge J_+ X
= \left[\frac{d f_+}{f_+} \wedge X, J_+\right] \end{split} \end{equation}
which, by using (\ref{nabla-conf}), is equivalent to
\begin{equation}\label{nJ+} \nabla ^{g_+} J _+ = 0, \end{equation}
where $\nabla ^{g_+}$ denotes the Levi-Civita connection of the conformal metric 
$g _+ = f_+ ^{- 2} \, g$, meaning that the pair $(g_+, J_+)$ is {\it K\"ahler}.  
Similarly, we have
\begin{equation} \label{nablaJ-} \nabla _X J _- = \left[\frac{d f_-}{f_-} \wedge X, J _-\right] \end{equation}
or, equivalently:
\begin{equation}\label{nJ-} \nabla ^{g_-} J _- = 0, \end{equation}
where $\nabla ^{g_-}$ denotes the Levi-Civita connection of the conformal metric 
$g _- = f_- ^{- 2} \, g$, meaning that the pair $(g_-, J_-)$ is K\"ahler as well.
We thus get on $M _0$ an {\it ambik\"ahler structure} in the sense of Definition \ref{defi-ambikaehler}.
Moreover, because of (\ref{Jdf}), $f_+$ and $f_-$ evidently satisfy (\ref{f})--(\ref{taudf}).

For the converse, define $g$ by 
\begin{equation} g = f _+ ^2 \, g_+ = f _- ^2 \, g_- \end{equation}
and denote by $\nabla$ the Levi-Civita connection of $g$. 
 By defining $\Psi _+ = f _+ \, J_+$, $\Psi _- = f_-  \, J_-$ and $\Psi = \Psi _+ + \Psi _-$,  
we get
\begin{equation} \begin{split} \nabla _X \Psi _+ & =  \nabla _X (f _+ \, J _+) \\ & =  \nabla ^{g_+} _X (f _+ \, J_+) + \left[\frac{d f_+}{f_+} \wedge X, f _+ \, J_+\right] \\ & =  d f _+ (X) \, J _+ - J _+  d f _+ \wedge X - d f _+ \wedge J_+ X \\ & = - 2  (J _+ d f _+ \wedge X)_+. \end{split} \end{equation}
Similarly, 
\begin{equation} \nabla _X \Psi _- = - 2 \,  (J _- d f_- \wedge X) _-. \end{equation}
By using (\ref{Jdf}), we obtain
\begin{equation} \nabla _X \Psi = \alpha \wedge X, \end{equation}
with 
$\alpha: = - 2  \,  J _+ d f _+ = - 2 \,  J _- d f _-$,
 meaning that the associated $2$-form $\psi (X, Y): = g (\Psi (X), Y)$, is $*$-Killing. Finally $\psi = f _+ \, g (J _+ \cdot, \cdot) + f _- \, g (J _- \cdot, \cdot) = f _+ ^3 \, \omega _+ + f _- ^3 \, \omega _-$. 
\end{proof} 
\begin{rem} {\rm The fact that the pair  $(g _+ = f _+ ^{-2} \, g, J _+)$, resp. the pair $(g _- = f _- ^{-2} \, g, J _-)$,  is K\"ahler only depends on, in fact is equivalent to,  $\Psi _+ = f _+ \, J _+$, resp. $\Psi _- = f _- \, J _-$,  being conformal Killing, i.e. $\psi$ being conformally Killing. This was observed in a twistorial setting by M. Pontecorvo in \cite{pontecorvo}, cf. also Appendix B2 in \cite{ACG2}.
}
\end{rem}
We now explain under which circumstances an ambik\"ahler structure satisfies the conditions (\ref{f})--(\ref{taudf}). 
\begin{prop} \label{prop-kappa} Let $M$ be an oriented $4$-manifold equipped with an ambik\"ahler structure $(g_+, J_+, \omega_+)$, $(g_- = f ^2 \, g _+, J_-, \omega_-)$. Assume moreover that $f$ is not constant. Then, on the open set 
where $f \neq 1$, there exist non-constant positive functions $f _+, f _-$ satisfying {\rm (\ref{f})--(\ref{taudf})} of {\rm Proposition \ref{prop-ambikaehler}}  if and only if the $1$-form
\begin{equation} \label{kappa} \kappa := \frac{\tau (df)}{1 - f ^2} \end{equation}
is exact.
\end{prop}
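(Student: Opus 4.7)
The plan is to show that conditions (2.10)--(2.11) collapse into a single exact-form condition for $\log f_-$, with $\kappa$ being precisely the form that must be integrated. Assuming first that positive $f_+, f_-$ satisfying (2.10)--(2.11) exist, I would eliminate $f_+$ using $f_+ = f\,f_-$, which gives $df_+ = f\,df_- + f_-\,df$. Because $\tau^2 = \mathrm{id}$ and $\tau$ is linear over functions, applying $\tau$ to $\tau(df_+) = df_-$ yields the companion relation $\tau(df_-) = df_+$. Substituting both expressions into $\tau(df_+) = df_-$ and collecting terms produces
\[
(1-f^2)\,df_- \;=\; f_-\bigl(f\,df + \tau(df)\bigr).
\]
Dividing by $f_-(1-f^2)$ on $\{f\neq 1\}$ and noting that $\frac{f\,df}{1-f^2} = -\tfrac{1}{2}\,d\log|1-f^2|$, this rearranges to
\[
d\log\bigl(f_-\sqrt{|1-f^2|}\,\bigr) \;=\; \frac{\tau(df)}{1-f^2} \;=\; \kappa,
\]
so $\kappa$ is necessarily exact.

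For the converse, given $\kappa = dh$ for some function $h$ on (each connected component of) $\{f\neq 1\}$, I would define
\[
f_- \;:=\; \frac{e^h}{\sqrt{|1-f^2|}}, \qquad f_+ \;:=\; f\,f_-.
\]
These are positive on the open set under consideration; since $f$ is non-constant, $f = f_+/f_-$ forces both $f_\pm$ to be non-constant. The verification of $\tau(df_+) = df_-$ then reverses the manipulation above: using $df_- = f_-\,\kappa + \tfrac{f f_-}{1-f^2}\,df$ from the definition, the required identity reduces after expansion to $\tau(\kappa) = \frac{df}{1-f^2}$, which is automatic from $\tau(\kappa) = \tau^2(df)/(1-f^2) = df/(1-f^2)$.

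The main conceptual step is the observation that the apparently two-function system (2.10)--(2.11) hides only a one-dimensional integrability problem, with $\kappa$ as the precise obstruction; once this is seen, no real difficulty remains. The only technical care needed is that the sign of $1-f^2$ is locally constant on $\{f \neq 1\}$, so the square root $\sqrt{|1-f^2|}$ is a well-defined smooth positive function on each connected component and the argument applies without modification.
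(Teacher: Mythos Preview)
Your proof is correct and follows essentially the same approach as the paper: both derive the identity $(1-f^2)\,df_-/f_- = f\,df + \tau(df)$ from (2.10)--(2.11), recognize the exact piece $f\,df/(1-f^2) = -\tfrac12\,d\log|1-f^2|$, and conclude that $\kappa$ is exact; for the converse both set $f_- = \varphi/|1-f^2|^{1/2}$ (with $\varphi = e^h$ in your notation) and $f_+ = f f_-$. The only cosmetic difference is that the paper also records the parallel equation for $df_+/f_+$, while you work entirely with $f_-$ and recover $f_+$ from $f_+ = f f_-$.
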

\begin{proof} 
For any ambik\"ahler structure $(g_+, J_+, \omega_+)$, $(g_- = f^2 \, g_+, J_-, \omega_-)$ and any positive functions $f _+$, $f _-$ satisfying  (\ref{f})--(\ref{taudf}), we have 
\begin{equation} \label{1-f} \begin{split} & (1 - f ^2) \, \frac{df _+}{f_+} = 
\frac{df}{f} + \tau (df), \\ 
& (1 - f ^2) \, \frac{df _-}{f_-} = f \, df + \tau (df). \end{split} \end{equation}
On the open set where $f \neq 1$, this can be rewritten as 
\begin{equation} \label{taudf+-} \begin{split} & 
\frac{d f_+}{f_+} = \frac{d f}{f (1 - f ^2)} + \frac{\tau (df)}{(1 - f ^2)}, \\ & 
\frac{d f_-}{f_-} = \frac{f d f}{(1 - f ^2)} + \frac{\tau (df)}{(1 - f ^2)}; \end{split} \end{equation}
in particular, $\kappa$ is exact on this open set. 
 Conversely, if $\kappa$ is exact, but not identically zero, then $\kappa = \frac{d \varphi}{\varphi}$, for some, non-constant,  positive function,  $\varphi$,  and we then {\it define} $f _+, f _-$ by 
$\frac{df _+}{f_+} = \frac{d \varphi}{\varphi} + \frac{d f}{f (1 - f ^2)}$ and  $\frac{d f _-}{f_-} = \frac{d \varphi}{\varphi} + \frac{f \, d f}{(1 - f ^2)}$, hence by 
$f _+ := \frac{f \, \varphi}{|1 - f ^2| ^{\frac{1}{2}}}$ and 
$f _- := \frac{\varphi}{|1 -  f ^2| ^{\frac{1}{2}}}$, which clearly satisfy (\ref{f})--(\ref{taudf}).
\end{proof}
\begin{rem} \label{rem-constant} {\rm It follows from (\ref{1-f}) that if $f = k$, where $k$ is a constant different from $1$, then $f _+$ and $f _-$ are constant and the corresponding $*$-Killing $2$-form $\psi$ is then parallel.
More generally, the existence of a pair $(g, \psi)$ inducing an ambik\"ahler structure depends on the chosen relative scaling of the K\"ahler metrics. More precisely, if the ambik\"ahler structure $(g_+, J_+, \omega_+)$, $(g_- = f ^2 \, g _+, J _-, \omega _-)$ arises from a $*$-Killing $2$-form in the conformal class, in the sense of Proposition \ref{prop-ambikaehler}, then for any positive constant $k \neq 1$, the ambik\"ahler structure 
$(g_+, J_+, \omega_+)$, $(\tilde{g} _- = k ^2 \, g _-, J _-, k^2\omega _-)$ {\it does not} arise from a $*$-Killing $2$-form, unless $\tau (df) = \pm df$. This is because the $1$-forms $\frac{\tau (df)}{(1 - f ^2)}$ and $\frac{\tau (df)}{(1 - k ^2 \, f ^2)}$ would then be both closed, implying that $\tau (df) = \phi \, df$ for some function $\phi$; since $|\tau (df)| = |d f|$, we would then 
have $\phi = \pm 1$. 
} \end{rem}
The $1$-form $\kappa$ in Proposition \ref{prop-kappa} is clearly exact on the open set where $f \neq 1$ whenever $\tau (df) = df$ or $\tau (df) = - df$, and it readily follows from (\ref{taudf+-}) that $f _+, f_-$ are then given by
\begin{equation} \label{f+} f _+ = \frac{c \, f}{|1 - f|}, \quad f _- = \frac{c}{|1 - f|} = \pm c + f _+, \end{equation}
if $\tau (df) = df$, or by
\begin{equation} \label{f-} f _+ = \frac{c \, f}{1 + f}, \quad f _- = \frac{c}{1 + f} = c - f _+, \end{equation}
if $\tau (df) = - df$, for some positive constant $c$.   If 
\begin{equation} \label{split} T M _0 = T^+ \oplus T ^-, \end{equation}
denotes the orthogonal splitting determined by $\tau$, where $\tau$ is the identity on $T ^+$ and minus the identity on $T ^-$ --- equivalently, $J _+$, $J _-$ coincide on $T ^+$ and are opposite on $T ^-$ --- then $\tau (df) = \pm
df$ if and only if $df _{|T ^{\mp}} = 0$ and we also have:
\begin{prop} \label{prop-involutive}
The distribution $T ^{\pm}$  is involutive if and only if $\tau (df) = \pm df$.
\end{prop}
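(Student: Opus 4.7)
The plan is to convert the Frobenius condition for $T^\pm$ into a condition on $df$ via the K\"ahler closedness of $\omega_\pm$. First I set $\sigma_\pm := g(J_\pm\cdot,\cdot) = f_\pm^2\,\omega_\pm$, so that $d\omega_\pm = 0$ becomes
\begin{equation*}
d\sigma_\pm = 2\,\theta_\pm\wedge\sigma_\pm,\qquad \theta_\pm := df_\pm/f_\pm.
\end{equation*}
Since $J_+$ and $J_-$ agree on $T^+$ and are opposite on $T^-$, on $M_0$ we have $\sigma_\pm = \nu_+ \pm \nu_-$, where $\nu_\pm$ is the Riemannian area $2$-form of the distribution $T^\pm$; in particular each $\nu_\pm$ is decomposable with $\ker\nu_\pm = T^\mp$. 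Taking half-sum and half-difference of the two displayed identities gives
\begin{equation*}
d\nu_\mp = (\theta_+ - \theta_-)\wedge\nu_\pm + (\theta_+ + \theta_-)\wedge\nu_\mp.
\end{equation*}

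Next I would invoke the ``defining-form'' version of the Frobenius criterion: a $2$-dimensional distribution $\mathcal D$ in a $4$-manifold, with decomposable defining $2$-form $\nu$ (so $\ker\nu = \mathcal D$), is involutive if and only if $d\nu = \nu\wedge\mu$ for some $1$-form $\mu$. Applied to $\mathcal D = T^\pm$ with $\nu = \nu_\mp$, the second summand above is manifestly a multiple of $\nu_\mp$; hence involutivity of $T^\pm$ reduces to $(\theta_+ - \theta_-)\wedge\nu_\pm$ being a multiple of $\nu_\mp$ by a $1$-form. In any local orthonormal coframe adapted to the splitting (with $\nu_+ = e^1\wedge e^2$, $\nu_- = e^3\wedge e^4$) this is immediate: the obstruction is exactly the $T^\mp$-component of $\theta_+ - \theta_-$, so the criterion reads $(\theta_+ - \theta_-)|_{T^\mp} = 0$.

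Finally, $\theta_+ - \theta_- = d\log(f_+/f_-) = d\log f$, so the condition becomes $df|_{T^\mp}=0$; since the splitting $TM_0 = T^+\oplus T^-$ is $g$-orthogonal with $\tau = \pm\,\mathrm{Id}$ on $T^\pm$, this is exactly $\tau(df) = \pm df$, as required. The only point that requires care is matching signs correctly (involutivity of $T^+$ pairs with $df|_{T^-}=0$, hence $\tau(df) = +df$); beyond that no conceptual obstacle is expected, and all computations are routine.
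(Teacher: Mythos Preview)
Your argument is correct. Both proofs rest on the closedness of $\omega_+$ and $\omega_-$, but the organization is genuinely different. The paper evaluates $d\omega_+$ and $d\omega_-$ on triples $X,Y\in T^+$, $Z\in T^-$ (and vice versa) via the Cartan formula, then compares the two resulting equations --- using that $\omega_-$ restricted to $T^+\times T^+$ equals $f^2\omega_+$ --- to extract the identity $\tfrac{df(Z)}{f}\,\omega_+(X,Y)=-\omega_+([X,Y],Z)$, from which involutivity is read off directly. You instead pass to the $g$-forms $\sigma_\pm=f_\pm^2\omega_\pm$, split them as $\nu_+\pm\nu_-$, and invoke the decomposable-$2$-form version of Frobenius ($\mathcal D=\ker\nu$ involutive iff $d\nu\in\nu\wedge\Omega^1$); the obstruction then falls out as the $T^\mp$-component of $d\log f$. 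Your route is a bit slicker and coordinate-free, and makes the role of the conformal factor $f$ transparent from the outset; the paper's route, on the other hand, yields the finer pointwise identities \eqref{dfZ-}--\eqref{dfZ+}, which measure exactly how far $[X,Y]$ fails to stay in $T^\pm$ in terms of $df$.
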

\begin{proof}
For a general ambik\"ahler structure 
$(g _+, J_+, \omega_+)$ and 
$(g _- = f ^2 \, g_+, J _-, \omega_-)$, with $g _- = f ^2 \, g _+$, we have
\begin{equation} \label{dfZ-} 
\frac{df (Z)}{f} \, \omega _+ (X, Y) = - \omega _+ ([X, Y], Z), \quad \frac{df (Z)}{f} \, \omega _- (X, Y) =  
\omega _- ([X, Y], Z), \end{equation}
for any $X, Y$ in $T ^+$ and any $Z$ in $T ^-$, and
\begin{equation} \label{dfZ+} 
\frac{df (Z)}{f} \, \omega _+ (X, Y) =  \omega _+ ([X, Y], Z), \quad  
\frac{df (Z)}{f} \, \omega _- (X, Y) =  
- \omega _- ([X, Y], Z),  \end{equation}
for any $X, Y$ in $T ^-$ and any $Z$ in $T ^+$. This can be shown as follows. 
Suppose that $X, Y$ are in $T ^+$ and $Z$ is in $T ^-$. Then, since the K\"ahler form $\omega _+ (\cdot, \cdot) = g _+ (J _+ \cdot, \cdot)$ and $\omega _- (\cdot, \cdot) = g (J_- \cdot, \cdot)$ are closed and $T ^+, T^-$ are $\omega _+$- and $\omega _-$-orthogonal, we have
\begin{equation} \label{domega+} 
Z \cdot \omega _+ (X, Y) = \omega _+ ([X, Y], Z) 
+  \omega _+ ([Y, Z], X) +  \omega _+ ([Z, X], Y), \end{equation}
and
\begin{equation} \label{domega-} Z \cdot \omega _- (X, Y) = \omega _- ([X, Y], Z) 
+  \omega _- ([Y, Z], X) +  \omega _- ([Z, X], Y), \end{equation}
which can be rewritten as
\begin{equation} \label{domega-+} 
Z \cdot \left(f ^2 \omega _+ (X, Y)\right) = - f ^2 \, 
\omega _+   ([X, Y], Z) 
+  f ^2 \, \omega _+ ([Y, Z], X) +  f ^2 \, \omega _+ ([Z, X], Y),  
\end{equation}
or else:
\begin{equation} \label{domega-+-bis} \begin{split} 
2 \frac{df (Z)}{f} \, \omega _+ (X, Y) & + Z \cdot \omega _+ (X, Y)  = \\ &  -  
\omega _+   ([X, Y], Z) 
+  \omega _+ ([Y, Z], X) +   \omega _+ ([Z, X], Y).  
\end{split} \end{equation}
Comparing (\ref{domega+}) and (\ref{domega-+-bis}), we readily deduce the first identity in (\ref{dfZ-}); the other three identities are checked similarly. Proposition \ref{prop-involutive} then readily follows from (\ref{dfZ-})--(\ref{dfZ+}).
\end{proof}

In the following statement, $M _0$ stills denotes the (dense) open subset of $M$ defined by (\ref{M0}); we also denote by $M _S$ the open subset of $M$ defined by
\begin{equation} \label{MS} f _+ \neq f _-, \end{equation}
on which $\psi$ is a symplectic $2$-form, and by $M _1$ the intersection 
$M _1 := M _0 \cap M _S$. 
\begin{prop} \label{prop-killing}  Let $(M, g)$ be an oriented Riemannian $4$-dimensional manifold 
admitting a non-parallel  $*$-Killing $2$-form $\psi$. Denote by 
$(g _+ = f _+ ^2 \, g, J _+, \omega _+)$, 
$(g _- = f _- ^2 \, g, J _-, \omega _-)$ the induced ambik\"ahler structure on $M _0$ as explained above. Then, on the open set $M _1$, the Ricci endomorphism, ${\rm Ric}$,  of $g$ is $J_+$- and $J_-$-invariant, hence of the form
\begin{equation} \label{Ric} {\rm Ric} = a \, {\rm I} + b \, \tau, 
\end{equation}
for some functions $a, b$, where ${\rm I}$ denotes the identity of $TM _1$ and $\tau$ is defined by {\rm (\ref{tau})}. Moreover,   the vector field 
\begin{equation} \label{K} K_1 := J _+ {\rm grad} _g f_+ = 
J _- {\rm grad} _g f _- = - \frac{1}{2} \alpha ^{\sharp} \end{equation}
is Killing with respect to $g$ and preserves the whole ambik\"ahler structure.
\end{prop}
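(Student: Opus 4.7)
Plan.

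\emph{Step 1.} The identifications $K_1 = J_+\mathrm{grad}_g f_+ = J_-\mathrm{grad}_g f_- = -\tfrac12\alpha^\sharp$ are read off directly from $\alpha = -2 J_+ df_+ = -2 J_- df_-$ in \eqref{alpha-J}. The $g$-skew-symmetry of $J_\pm$ gives
\[
K_1(f_\pm) \;=\; df_\pm(K_1) \;=\; g(V_\pm, J_\pm V_\pm) \;=\; 0, \qquad V_\pm := \mathrm{grad}_g f_\pm,
\]
so $K_1$ preserves both conformal factors.

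\emph{Step 2.} Differentiating $\nabla_X\Psi = \alpha\wedge X$ once more and invoking the Ricci commutation identity for the curvature yields
\[
[R(Y,X),\Psi] \;=\; (\nabla_Y\alpha)\wedge X - (\nabla_X\alpha)\wedge Y.
\]
Contracting by setting $X=e_i$, applying as an endomorphism to $e_i$, and summing over an orthonormal frame, while using $\delta\alpha = 0$ (which follows from $\delta\psi = 3\alpha$ and $\delta^2 = 0$) together with the first Bianchi identity, produces the pointwise identity
\[
-2\,(\nabla_Y\alpha)^\sharp \;=\; \sum_i R(Y,e_i)\Psi(e_i) + \Psi\bigl(\mathrm{Ric}(Y)\bigr). \qquad(\dagger)
\]

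\emph{Step 3.} Taking the $(Y,Z)$-symmetric part of $(\dagger)$ after pairing with $Z$, the $Y,Z$-symmetric piece of $\sum_i R(Y,e_i,\Psi(e_i),Z)$ simplifies via the first Bianchi identity and the $4$-dimensional block decomposition $\mathfrak{so}(TM) = \Lambda^+\oplus\Lambda^-$ --- using $\Psi = f_+ J_+ + f_- J_-$ with $J_\pm$ commuting --- to a multiple of the symmetric endomorphism $[\Psi,\mathrm{Ric}]$, modulo a piece antisymmetric in $(Y,Z)$. Consequently $(\dagger)$ reduces on $M_1$ to the two separate assertions
\[
[\mathrm{Ric},\Psi] \;=\; 0 \quad\text{and}\quad (\nabla\alpha)_{\mathrm{sym}} \;=\; 0.
\]
The first one, combined with the eigen-decomposition of $\Psi^2$ --- whose two distinct eigenvalues $-(f_++f_-)^2$ and $-(f_+-f_-)^2$ separate $TM_1$ into the $\tau$-eigenspaces $T^+$ and $T^-$ --- forces $\mathrm{Ric}$ to preserve $T^\pm$. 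An elementary argument then, using $J_- = J_+$ on $T^+$, $J_- = -J_+$ on $T^-$, and the hypothesis $f_+\neq f_-$ defining $M_1$, separates the single commutation $f_+[\mathrm{Ric},J_+]+f_-[\mathrm{Ric},J_-]=0$ into $[\mathrm{Ric},J_\pm]=0$; hence $\mathrm{Ric} = a\,\mathrm{I}+b\,\tau$. The second identity gives that $K_1 = -\tfrac12\alpha^\sharp$ is Killing for $g$.

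\emph{Step 4.} Since $\alpha = -2 J_+ df_+$ and $J_+$ is integrable (because $(g_+,J_+)$ is K\"ahler), the $2$-form $d\alpha = -2\,dd^c_{J_+}\!f_+$ is of type $(1,1)$ with respect to $J_+$; likewise $d\alpha = -2\,dd^c_{J_-}\!f_-$ is of type $(1,1)$ with respect to $J_-$. In dimension $4$, the intersection $\Lambda^{1,1}_{J_+}\cap\Lambda^{1,1}_{J_-}$ is $2$-dimensional and spanned by the two K\"ahler forms, so $d\alpha \in \mathbb{R}\,\omega_+ \oplus \mathbb{R}\,\omega_-$. Since $K_1$ is Killing for $g$ by Step 3,
\[
\nabla K_1 \;=\; \tfrac12(dK_1^\flat)^\sharp \;=\; -\tfrac14(d\alpha)^\sharp
\]
is a real linear combination of $J_+$ and $J_-$, and --- as these commute --- commutes with both. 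Combined with the direct verification from \eqref{nablaJ+}, applied with $X = K_1 = J_\pm V_\pm$, that every term cancels and so $\nabla_{K_1} J_\pm = 0$, this yields $\mathcal{L}_{K_1}J_\pm = 0$. Together with $\mathcal{L}_{K_1}g = 0$ and $K_1(f_\pm) = 0$, one concludes $\mathcal{L}_{K_1}g_\pm = \mathcal{L}_{K_1}(f_\pm^{-2}g) = 0$ and $\mathcal{L}_{K_1}\omega_\pm = 0$, so $K_1$ preserves the full ambik\"ahler structure.

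\emph{Main obstacle.} The heart of the argument is Step 3: simultaneously extracting the $J_\pm$-invariance of $\mathrm{Ric}$ and the Killing property of $K_1$ from the single contracted identity $(\dagger)$. This requires a delicate manipulation of the $4$-dimensional block decomposition of the Riemann curvature operator under the Hodge-star splitting, and is essentially enabled by the symplectic hypothesis $f_+\neq f_-$ that defines $M_1$.
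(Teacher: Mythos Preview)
Your Step~3 contains a genuine gap. From the single contracted identity $(\dagger)$, the $(Y,Z)$-symmetric part yields exactly one equation, namely
\[
-2\,(\nabla\alpha)^{s} \;=\; \tfrac12\,[\mathrm{Ric}_0,\Psi],
\]
and nothing more. Your claim that $(\dagger)$ ``reduces on $M_1$ to the two separate assertions $[\mathrm{Ric},\Psi]=0$ and $(\nabla\alpha)_{\mathrm{sym}}=0$'' is not justified: from one equation relating two a~priori unrelated symmetric tensors you cannot conclude that both vanish. The passage ``simplifies via the first Bianchi identity and the $4$-dimensional block decomposition~\dots\ to a multiple of $[\Psi,\mathrm{Ric}]$'' is correct as a computation of the right-hand side, but it only gives you the single relation above, not two.

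What the paper does --- and what your argument is missing --- is to exploit the two \emph{separate} equations $\nabla_X\Psi_\pm = (\alpha\wedge X)_\pm$ coming from the self-dual/anti-self-dual split. Running the same contraction argument for $\Psi_+$ and $\Psi_-$ individually yields
\[
(\nabla\alpha)^s \;=\; -\tfrac12\,[\mathrm{Ric}_0,\Psi_+] \;=\; -\tfrac12\,[\mathrm{Ric}_0,\Psi_-],
\]
hence the ``crux'' identity $[\mathrm{Ric},\Psi_+]=[\mathrm{Ric},\Psi_-]$. It is \emph{this} equality (not $[\mathrm{Ric},\Psi]=0$, which you have not established) that, via the block decomposition relative to $T^+\oplus T^-$, forces both commutators to vanish on $M_1$ where $f_+\neq f_-$; only then does $(\nabla\alpha)^s=0$ follow. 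Your Steps~1 and~4 are fine, and your deduction of $[\mathrm{Ric},J_\pm]=0$ from $[\mathrm{Ric},\Psi]=0$ via the eigenspaces of $\Psi^2$ would be a valid alternative to the paper's matrix computation --- but you need to first obtain that commutation, and for that you must split $\Psi$ into $\Psi_+$ and $\Psi_-$ and use both halves of the $*$-Killing equation.
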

\begin{proof}
Let ${\rm R}$ be the curvature tensor of $g$, defined by
\begin{equation} {\rm R} _{X, Y} Z: = \nabla _{[X, Y]} Z - [\nabla _X, \nabla _Y] Z, \end{equation} 
for any vector field $X, Y, Z$. We denote by ${\rm Scal}$ its scalar curvature, 
by ${\rm Ric}  _0$ the trace-free part of ${\rm Ric}$, by ${\rm W}$ the Weyl tensor of $g$, and by ${\rm W} ^+$ and ${\rm W} ^-$ its self-dual and anti-self-dual part respectively.
As in the previous section, $\Psi$ denotes the skew-symmetric endomorphism of $TM$ determined by $\psi$, $\Psi _+$ its self-dual part, $\Psi _-$ its anti-self-dual part, with 
$\Psi _+ = f _+  \,  J_+$ and $\Psi _- = f _-  \,  J _-$ on $M _0$. Since $g = f _+ ^2 \, g _+ = f _- ^2 \, g _-$, where $g _+$ and $g _-$ are K\"ahler with respect to $J_+$ and $J _-$ respectively, ${\rm W} ^+$ and ${\rm W} ^-$ are both degenerate 
and ${\rm W} ^+ (\Psi _+) = 
\lambda _+ \,  \Psi _+$, ${\rm W} ^- (\Psi _-) = \lambda _- \Psi _-$, for some functions $\lambda _+, \lambda _-$. 
For any vector fields $X, Y$ on $M$, the usual decomposition of the curvature tensor reads:
\begin{equation} \label{RA} \begin{split} 
{\rm R} _{X, Y} \Psi & = [{\rm R} (X  \wedge Y), \Psi] \\ & = 
\frac{{\rm Scal}}{12} \, [X^{\flat}  \wedge Y, \Psi] + \frac{1}{2} [\{{\rm Ric} _0, X ^{\flat} \wedge Y\}, \Psi]\\ & + [{\rm W} ^+ (X \wedge Y), \Psi _+] + [{\rm W} ^- (X \wedge Y), \Psi _-], \end{split} \end{equation}
by setting $\{{\rm Ric}_0, X ^{\flat}  \wedge Y\} := {\rm Ric}  _0 \circ (X ^{\flat}  \wedge Y) 
+ (X ^{\flat}  \wedge Y) \circ {\rm Ric}  _0= {\rm Ric}  _0 (X) \wedge Y + X \wedge {\rm Ric}  _0 (Y)$, cf. {\it e.g.} \cite[Chapter 1, Section G]{besse}. 
On $M_0$ we then have:
\begin{equation} \frac{{\rm Scal}}{12} [X \wedge Y, \Psi] = - \frac{{\rm Scal}}{12} \, \big(\Psi (X) \wedge Y + X \wedge \Psi (Y)\big), \end{equation}
\begin{equation} \begin{split} 
\frac{1}{2} [\{{\rm Ric}_0, X \wedge Y\}, \Psi] & =   - \frac{1}{2} \, 
\Big(\Psi \big({\rm Ric} _0 (X)\big) \wedge Y + {\rm Ric} _0 (X) \wedge \Psi (Y) \\ & \qquad \quad + \Psi (X) \wedge {\rm Ric} _0 (Y)  + X \wedge \Psi \big({\rm Ric} _0 (Y)\big)\Big), \end{split} \end{equation}
and
\begin{equation}  \begin{split} & {\rm W} ^+ _{X, Y} \Psi _+ = \frac{\lambda _+}{2} \, \big(\Psi _+ (X) \wedge Y + X \wedge \Psi _+ (Y)\big), \\ & {\rm W} ^- _{X, Y} \Psi _- = \frac{\lambda _-}{2} \big(\Psi _- (X) \wedge Y + X \wedge \Psi _- (Y)\big).  \end{split} \end{equation}
We thus get
\begin{equation} \begin{split} \sum _{i = 1} ^4 e _i \lrcorner {\rm R} _{e _i, Y} \Psi  
 = & \left(\lambda _+ - \frac{{\rm Scal}}{6} \right) \, \Psi _+ (Y) + 
\left(\lambda _- - \frac{{\rm Scal}}{6} \right) \, \Psi _- (Y)\\ & + \frac{1}{2} [{\rm Ric} _0, \Psi] (Y). \end{split} \end{equation}
Similarly,
\begin{equation}  \sum _{i = 1} ^4 e _i \lrcorner {\rm R} _{e _i, Y} \Psi _+  
 =  \left(\lambda _+ - \frac{{\rm Scal}}{6} \right) \, \Psi _+ (Y) 
+ \frac{1}{2} [{\rm Ric} _0, \Psi _+] (Y)   \end{equation}
and
\begin{equation}  \sum _{i = 1} ^4 e _i \lrcorner {\rm R} _{e _i, Y} \Psi _-
 =  \left(\lambda _- - \frac{{\rm Scal}}{6} \right) \, \Psi _- (Y) + \frac{1}{2} [{\rm Ric} _0, \Psi _-] (Y).  \end{equation}
On the other hand, from (\ref{nablaA}), we get
\begin{equation} {\rm R} _{X, Y} \Psi = \nabla _Y \alpha \wedge X - \nabla _X \alpha \wedge Y, \end{equation}
hence
\begin{equation} \sum _{i = 1} ^4 e _i \lrcorner {\rm R} _{e _i, Y} \Psi = - 2 \nabla _Y \alpha, \end{equation}
whereas, from (\ref{nablaA+-}), we obtain
\begin{equation} {\rm R} _{X, Y} \Psi _+ = (\nabla _Y \alpha \wedge X - \nabla _X \alpha \wedge Y) _+, \quad {\rm R} _{X, Y} \Psi _- = (\nabla _Y \alpha \wedge X - \nabla _X \alpha \wedge Y) _-, \end{equation}
hence 
\begin{equation} \sum _{i = 1} ^4 e _i \lrcorner {\rm R} _{e _i, Y} \Psi _+ =  - Y \lrcorner \left(\nabla  \alpha\right) ^s - Y \lrcorner (d \alpha) _+, \end{equation}
where $\left(\nabla  \alpha\right) ^s$ denotes the symmetric part of $\nabla \alpha$. Indeed,  we have 
\begin{equation} \begin{split} \sum _{i = 1} ^4 e _i \lrcorner \big(\nabla _Y \alpha \wedge e_i  - \nabla _{e_i}& \alpha \wedge Y)_+  = \frac{1}{2} 
\sum _{i = 1} ^4 e _i \lrcorner (\nabla _Y \alpha \wedge e _i) - \frac{1}{2} 
\sum _{i = 1} ^4 e _i \lrcorner (\nabla _{e _i} \alpha \wedge Y) 
\\ & + \frac{1}{2} 
\sum _{i = 1} ^4 e _i \lrcorner * (\nabla _Y \alpha \wedge e _i) - 
\frac{1}{2} \sum _{i = 1} ^4 e _i \lrcorner * (\nabla _{e _i} \alpha \wedge Y) \\ &  = - \nabla _Y \alpha - \frac{1}{2} \sum _{i = 1} ^4 e _i \lrcorner * (\nabla _{e _i} \alpha \wedge Y) \\ & = - \nabla _Y \alpha - \frac{1}{2} Y \lrcorner * d \alpha = - Y \lrcorner (\nabla \alpha) ^s - Y \lrcorner (d \alpha) _+, 
\end{split} \end{equation}
as $\delta \alpha = 0$ and $e _i \lrcorner * (\nabla _Y \alpha \wedge e _i)$ is clearly equal to zero thanks to the general identity (\ref{identity}). 
We obtain similarly:
\begin{equation} \sum _{i = 1} ^4 e _i \lrcorner {\rm R} _{e _i, Y} \Psi _- = -  Y \lrcorner  \left(\nabla \alpha\right) ^s - Y \lrcorner (d \alpha) _-. \end{equation}
From the above, we  infer
\begin{equation} \label{nablaalpha} \begin{split} &(d \alpha) _+ = \left(\frac{{\rm Scal}}{6} - \lambda _+\right) \,  \psi _+, \quad 
(d \alpha) _-=  \left(\frac{{\rm Scal}}{6} - 
\lambda _- \right) \, \psi _-, \\ &
\left(\nabla  \alpha\right) ^s = - \frac{1}{2} [{\rm Ric} _0, \Psi _+] = - 
\frac{1}{2} [{\rm Ric} _0, \Psi _-]. \end{split} \end{equation}
%where $\left(\nabla \alpha\right) ^{a, +}$, resp. 
%$\left(\nabla \alpha\right) ^{a, -}$, denotes the self-dual, resp. anti-self-dual, skew-symmetric part of $\nabla \alpha$.  
It follows that 
\begin{equation} \label{crux}
[{\rm Ric}, \Psi _+] = [{\rm Ric}, \Psi _-], \end{equation}
and that the vector field $\alpha ^{\sharp _g}$ is Killing with respect to $g$ if and only if $[{\rm Ric}, \Psi _+] = [{\rm Ric}, \Psi _-] = 0$.
We now show that (\ref{crux}) actually {\it implies}  $[{\rm Ric}, \Psi _+] = [{\rm Ric}, \Psi _-] = 0$ at each point where $f _+ \neq f _-$.  Indeed, in terms of the decomposition (\ref{split-lambda}), ${\rm Ric}$, $J _+$, $J _-$ can be written in the following matricial form
\begin{equation} {\rm Ric} = \begin{pmatrix} P & Q \\ Q^* & R \end{pmatrix}, \quad J _+ = \begin{pmatrix} J & 0 \\ 0 & J \end{pmatrix}, \quad J _- = 
\begin{pmatrix} J & 0 \\ 0 & - J \end{pmatrix} \end{equation}
where $J$ denotes the restriction of $J _+$ on $T ^+$ and on $T ^-$, so that:
\begin{equation} [{\rm Ric}_0, J_+] = \begin{pmatrix} [P, J] & [Q, J] \\ [Q^*, J] & [R, J] \end{pmatrix}, \quad [{\rm Ric} _0, J_-] = \begin{pmatrix} 
[P, J] & - \{Q, J\} \\ \{Q^*, J\} & - [R, J]. \end{pmatrix} \end{equation} 
Then (\ref{crux}) can be expanded as
\begin{equation} \label{crux-expand} \begin{split} & (f _+ - f _-) [P, J] = 0, 
 \\ & (f _+ + f _-) \, Q J = (f_+ - f _-) \, J Q, \\ & (f _+ + f _-) [R, J] = 0. \end{split} \end{equation}
Since $f_+>0$ and $f_-> 0$ on $M_0$, from (\ref{crux-expand}) we readily infer $[R, J] = 0$ and $Q = 0$, meaning that
\begin{equation} {\rm Ric} = \begin{pmatrix} P & 0 \\ 0 & R \end{pmatrix}.
\end{equation}
Moreover, on the open subset $M _1 = M _0 \cap M _S$, where $f _+ - f _- \neq 0$, we also  infer from (\ref{crux-expand}) that $[P, J] = 0$, hence that $[{\rm Ric}, J _+] = [{\rm Ric}, J_-] = 0$. By (\ref{nablaalpha}), $(\nabla \alpha) ^s = 0$, meaning that the the vector field $K_1 := - \frac{1}{2} \alpha ^{\sharp} = J _+ {\rm grad} _g f_+$ is Killing with respect to $g$.  Notice that
\begin{equation} \begin{split} \label{K+-} K_1&  = J _+ {\rm grad} _g f _+ = J _- {\rm grad} _g f _- \\ & = - J _+ {\rm grad} _{g _+} \frac{1}{f _+} = - J _- {\rm grad} _{g _-} \frac{1}{f_-}. \end{split} \end{equation}
In particular, $K_1$ is also Killing with respect to $g _+$ and $g _-$ and is (real) holomorphic with respect to $J _+$ and $J _-$. 
\end{proof}
%\begin{rem} {\rm 
%When the Ricci tensor ${\rm Ric}$ is given by (\ref{Ric}), we have ${\rm Scal} = 4 a$. From the general 
%contracted second Bianchi identity
%\begin{equation} \label{bianchi} \delta {\rm Ric} + \frac{1}{2} d {\rm Scal} = 0, 
%\end{equation}
%and from the identity
%\begin{equation} \label{deltatau} \delta \tau = - 2 \tau \left(\frac{d f _+ ^{\sharp}}{f_+} + \frac{d f ^{\sharp} _-}{f_-}\right), \end{equation}
%which easily follows from (\ref{nablaJ+})--(\ref{nablaJ-}), we thus get
%\begin{equation} \label{dadab} d a = 2 b \,  \left(\frac{d f _+}{f_-} + \frac{d f _-}{f _+}\right) + \tau (d b). \end{equation}
%} \end{rem}

\section{Separation of variables} \label{sorthogonal} 

In this section we restrict our attention to the open subset  $M _1 := M _0 \cap M _S$,  defined by the conditions (\ref{M0}) and (\ref{MS}). 
Recall that since $\psi \wedge \psi = \psi _+ \wedge \psi _+ + \psi _- \wedge \psi _- = 
2 (f _+ - f _-) \, v _g$, where $v _g$ denotes the volume form of $g$ relative to the chosen orientation, $M _S$ is the open subset of $M$ where $\psi$ is non-degenerate, hence a symplectic $2$-form. 
According to  Proposition \ref{prop-killing},  on $M _1$ the Ricci tensor ${\rm Ric}$ is of the form (\ref{Ric}), for some functions $a, b$
% --- equivalently,
%\begin{equation} \label{Ric0} {\rm Ric} _0 = b \, \tau, \end{equation}
%and ${\rm Scal} = 4 a$ ---  
and the vector field $\alpha ^{\sharp}$ is Killing;  we then infer from (\ref{nablaalpha}) that $\nabla \alpha ^{\sharp}$ can be written as:
\begin{equation} \label{nablaalphasharp} \nabla \alpha ^{\sharp} = h _+ \, J _+ + h _- \, J _-, \end{equation}
with
\begin{equation} \label{h} h _+ := \frac{1}{2} f _+ \left(\frac{\rm Scal}{6} - \lambda _+\right), \quad h _- := \frac{1}{2} f _ - \left(\frac{\rm Scal}{6} - \lambda _-\right). \end{equation}
We then introduce the functions $x, y$ defined by
\begin{equation} \label{xy} \begin{split} 
& x: = \frac{f _+ + f _-}{2}, \quad y: = \frac{f _+ - f _-}{2}, \\ & 
f _+ = x + y, \qquad \quad f _- = x - y.   \end{split} \end{equation}
Notice that $(2 x, 2 y)$, resp. $(2 x, - 2 y)$,  are the  eigenvalues of the Hermitian operator $- J _+ \circ \Psi = f _+ \, {\rm I} + f _- \, \tau$, resp. $- J _- \circ  \Psi = f _+ \,\tau + f _- \, {\rm I}$,  relative to the eigen-subbundle $T ^+$ and $T^-$ respectively. From (\ref{M0}) and (\ref{MS})  we deduce that $x, y$ are subject to the conditions
\begin{equation} \label{condxy} x >  |y| > 0,  \end{equation}
whereas, from (\ref{Jdf}), we infer
\begin{equation} \label{tauxy} \tau (dx) = d x, \qquad \tau (dy) = - dy. \end{equation} 
In particular, $dx$, $J _+ dx = J _- dx$, $dy$ and $J _+ dy = - J _- dy$ are 
{\it pairwise orthogonal} and 
\begin{equation} |dx| ^2 + |dy| ^2 = |d f_+|^2 = |df _-| ^2, \quad 
|dx| ^2 -  |dy| ^2 = (d f _+, d f _-). \end{equation}
We then have:
\begin{prop} \label{prop-separate}   On each connected component of the open subset of $M_1$ where $dx\ne 0$ and $dy\ne0$, the square norm of $dx, dy$ and the Laplacians of $x, y$ relative to $g$ are given by
\begin{equation} \label{separate} \begin{split} &  |d x| ^2 = 
\frac{A (x)}{(x ^2 - y ^2)}, 
\quad |d y| ^2 = \frac{B (y)}{(x ^2 - y ^2)}, \\ & \Delta x = - 
\frac{A ' (x)}{(x ^2 - y ^2)}, \quad \Delta y = - \frac{B ' (y)}{(x ^2 - y ^2)}, \end{split} \end{equation}
where $A, B$ are functions of one variable. 
\end{prop}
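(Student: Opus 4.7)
The plan is to work in an orthonormal frame adapted both to the $\tau$-splitting $TM_1 = T^+ \oplus T^-$ and to the gradients of $x, y$. Set
\begin{equation*}
e_1 := \mathrm{grad}\,x / p, \quad e_2 := J_+ e_1, \quad e_3 := \mathrm{grad}\,y / q, \quad e_4 := J_+ e_3,
\end{equation*}
with $p := |dx|$, $q := |dy|$, so that $e_1, e_2 \in T^+$, $e_3, e_4 \in T^-$, and $K_1 = p e_2 + q e_4$. By Proposition \ref{prop-killing}, the Killing equation for $K_1$ becomes $\nabla_U K_1 = -a J_+ U$ for $U \in T^+$ and $\nabla_U K_1 = -b J_+ U$ for $U \in T^-$, where $2a := h_+ + h_-$ and $2b := h_+ - h_-$. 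The K\"ahler identities $\nabla_U J_{\pm} = [\tfrac{d f_{\pm}}{f_{\pm}} \wedge U, J_{\pm}]$ from the proof of Proposition \ref{prop-ambikaehler}, evaluated on the basis vectors and combined by sums and differences of the $J_+$- and $J_-$-equations, pin down every mixed connection coefficient; the identities $\tfrac{1}{f_+} + \tfrac{1}{f_-} = \tfrac{2x}{x^2-y^2}$ and $\tfrac{1}{f_+} - \tfrac{1}{f_-} = -\tfrac{2y}{x^2-y^2}$ are responsible for the denominator $x^2 - y^2$ appearing throughout.

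With the connection forms explicit, expanding $\nabla_{e_1} K_1$ and $\nabla_{e_3} K_1$ and matching components yields
\begin{equation*}
X(p^2) = 2 p^2 \Big( -a + \tfrac{q^2 x}{x^2 - y^2} \Big), \qquad Y(p^2) = \tfrac{2 p^2 q^2 y}{x^2 - y^2},
\end{equation*}
and the analogous formulas for $X(q^2)$, $Y(q^2)$. The same Killing and K\"ahler computations give $V(p^2) = V(q^2) = Z(p^2) = Z(q^2) = 0$, where $V := J_+ X$ and $Z := J_+ Y$. Since $V, Z$ also annihilate $x$ and $y$ (by the orthogonality of $dx, dy, J_+ dx, J_+ dy$), the functions $(x^2-y^2)p^2$ and $(x^2-y^2)q^2$ are killed by three of the four frame vectors. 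The $Y(p^2)$ formula gives $Y((x^2-y^2)p^2) = -2 y p^2 q^2 + (x^2-y^2)\cdot 2 p^2 q^2 y/(x^2-y^2) = 0$, and symmetrically $X((x^2-y^2)q^2) = 0$. Thus $(x^2 - y^2) p^2$ depends only on $x$, defining $A(x)$, and $(x^2 - y^2) q^2$ only on $y$, defining $B(y)$.

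For the Laplacian formulas, I compute $\Delta f_{\pm}$ and then use $x = (f_+ + f_-)/2$, $y = (f_+ - f_-)/2$. From $\mathrm{grad}\,f_{\pm} = - J_{\pm} K_1$,
\begin{equation*}
\mathrm{Hess}(f_{\pm})(U) = - (\nabla_U J_{\pm}) K_1 - J_{\pm} \nabla_U K_1.
\end{equation*}
Using the K\"ahler formula together with $df_{\pm}(K_1) = 0$ and $J_{\pm} K_1 = -\mathrm{grad}\,f_{\pm}$, one obtains $(\nabla_U J_{\pm}) K_1 = -\tfrac{|df_{\pm}|^2}{f_{\pm}} U + \tfrac{df_{\pm}(U)}{f_{\pm}} \mathrm{grad}\,f_{\pm} + \tfrac{g(U, K_1)}{f_{\pm}} K_1$; its trace over the orthonormal frame is $-2(p^2+q^2)/f_{\pm}$, while the trace of $J_{\pm} \nabla K_1$ is $2(a \pm b)$. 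With the Hodge--de Rham convention $\Delta = \delta d$ on functions, this gives $\Delta f_{\pm} = 2(a \pm b) - 2(p^2+q^2)/f_{\pm}$. Summing and subtracting yields $\Delta x = 2a - 2x(p^2+q^2)/(x^2-y^2)$ and $\Delta y = 2b + 2y(p^2+q^2)/(x^2-y^2)$; differentiating $A(x) = (x^2-y^2)p^2$ in the $X$-direction gives $A'(x) = 2x(p^2+q^2) - 2a(x^2-y^2)$, so $\Delta x = -A'(x)/(x^2-y^2)$, and symmetrically for $\Delta y$.

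The main obstacle is the first step: disentangling the paired K\"ahler identities for $J_+$ and $J_-$ to extract the connection forms in this mixed frame. Once the connection is explicit, every subsequent computation is short and direct.
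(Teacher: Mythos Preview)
Your argument is correct and follows essentially the same route as the paper's proof: both rely on the pair of inputs $\nabla K_1 = -\tfrac{1}{2}(h_+ J_+ + h_- J_-)$ and the K\"ahler identities $\nabla_U J_{\pm} = [\tfrac{df_{\pm}}{f_{\pm}} \wedge U, J_{\pm}]$ to compute the Hessians of $f_{\pm}$ (hence of $x,y$), then read off $d|dx|^2$, $d|dy|^2$ and observe that $(x^2-y^2)|dx|^2$, $(x^2-y^2)|dy|^2$ are functions of $x$ alone and $y$ alone. The only difference is packaging: the paper writes the Hessians $\nabla df_{\pm}$, $\nabla dx$, $\nabla dy$ as explicit tensor expressions and contracts, whereas you carry out the equivalent computation componentwise in the adapted orthonormal frame $(e_1,e_2,e_3,e_4)$; in particular your claimed vanishings $V(p^2)=Z(p^2)=V(q^2)=Z(q^2)=0$ are exactly the statement, visible in the paper's formulas for $\nabla dx,\nabla dy$, that the Hessians have no $J_+dx$- or $J_+dy$-components when paired with $\mathrm{grad}\,x$ or $\mathrm{grad}\,y$.
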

\begin{proof} By using (\ref{nablaJ+}) and (\ref{nablaJ-}) and setting $g _{\tau} (X, Y) := g (\tau (X), Y)$, we infer from (\ref{alpha-J}) and (\ref{nablaalphasharp}) that 
\begin{equation} \label{nabladf+-} \begin{split} &
\nabla d f _+ = \left(- \frac{1}{2} h _+ + \frac{|d f _+|^2}{f_+}\right) \, g - \frac{1}{2} \, h _- \, g _{\tau} \\ & \qquad \qquad - \frac{1}{f _+} \big(d f _+ \otimes d f _+  + J _+ d f _+ \otimes J _+ d f _+\big), \\ &
\nabla d f _- = \left(- \frac{1}{2} h _- + \frac{|d f _-|^2}{f_-}\right) \, g - \frac{1}{2} \, h _+ \, g _{\tau} \\ & \qquad \qquad - \frac{1}{f _-} \big(d f _- \otimes d f _-  + J _- 
d f _- \otimes J _- d f _-\big). \end{split} \end{equation}
In terms of the functions $x, y$, this can be rewritten as
\begin{equation} \label{nabladxy} \begin{split} 
& \nabla d x = \big(\frac{x}{(x ^2 - y ^2)} (|d x| ^2 + |d y| ^2) - \frac{1}{4} (h _+ + h _-)  \big) \, g - \frac{1}{4} (h _+ + h _-) \, g _{\tau} \\ & \quad \qquad - \frac{x}{(x ^2 - y ^2)} \, (d x \otimes d x + d y \otimes d y) + 
\frac{y}{(x ^2 - y ^2)} \, (d x \otimes d y + d y \otimes d x) \\ & \quad \qquad
- \frac{x}{(x ^2 - y ^2)} \, J _+ (d x + d y) \otimes J _+ (d x + d y), \\
& \nabla d y = - \big(\frac{y}{(x ^2 - y ^2)} (|d x| ^2 + |d y| ^2) + \frac{1}{4} (h _+ - h _-)  \big) \, g +  \frac{1}{4} (h _+ - h _-) \, g _{\tau} \\ & \quad \qquad + \frac{y}{(x ^2 - y ^2)} \, (d x \otimes d x + d y \otimes d y) - 
\frac{x}{(x ^2 - y ^2)} \, (d x \otimes d y + d y \otimes d x) \\ & \quad \qquad
+  \frac{y}{(x ^2 - y ^2)} \, \big(J _+ (d x + d y) \otimes J _+ (d x + d y)\big).
\end{split} \end{equation}
In particular:
\begin{equation} \label{Deltaxy} \begin{split} & \Delta x = (h _+ + h _-) - \frac{2 x}{(x ^2 - y ^2)} \, (|d x| ^2 + |d y| ^2), \\ & \Delta y = (h _+ - h _-) + \frac{2 y}{(x ^2 - y ^2)} \, (|d x| ^2 + |dy|^2). \end{split} \end{equation}
To simplify the notation, we temporarily put
\begin{equation} F := |d x| ^2, \qquad G := |d y| ^2. \end{equation}
By contracting $\nabla d x$ by $dx$ and $\nabla d y$ by $d y$ in 
(\ref{nabladxy}), and taking (\ref{Deltaxy}) into account, we obtain:
\begin{equation} \begin{split} \label{system}
& d F = - \left(\Delta x  + \frac{2 x \, F}{(x ^2 - y ^2)}\right) \, d x + \frac{2 y \, F}{(x ^2 - y ^2)} \, d y, \\ &
d G = - \frac{2 x \, G}{(x ^2 - y ^2)} \, d x - \left(\Delta y -  
\frac{2 y \, G}{(x ^2 - y ^2)}\right) \, dy.  \end{split} \end{equation}
From (\ref{system}), we get
\begin{equation} \begin{split} 
& d \big((x ^2 - y ^2) \, F\big) = - \big((x ^2 - y ^2) \, \Delta x\big) \, dx, \\ &  d \big((x ^2 - y ^2) \, G \big) = - \big((x ^2 - y ^2) \, \Delta y\big) \, dy.  \end{split} \end{equation}
It follows that $(x ^2 - y ^2) \, F = A (x)$, for some (smooth) function $A$ of one variable and that  $A' (x) = - (x ^2 - y ^2) \, \Delta x$; likewise, $(x ^2 - y ^2) \, G = B (y)$ and $B ' (y) = - (x ^2 - y ^2) \, \Delta y$.
\end{proof}
A simple computation using (\ref{Deltaxy}) shows that in terms of $A, B$, the functions $h _+, h _-$ appearing in 
{\rm \eqref{nablaalphasharp}} and their derivatives $d h_+$, $d h _-$ have the following expressions:
\begin{equation} \label{h+-} \begin{split} 
& h _+ = - \frac{A' (x) + B' (y)}{2 (x ^2 - y ^2)} + \frac{(x - y) (A (x) + B (y))}{(x ^2 - y ^2) ^2}, \\
& h _- = - \frac{A' (x) -  B' (y)}{2 (x ^2 - y ^2)} + \frac{(x + y) (A (x) + B (y))}{(x ^2 - y ^2) ^2}, \end{split} \end{equation}
\begin{equation} \label{dh+} \begin{split} 
 d h _+ & = - \frac{A'' (x) dx + B'' (y) dy}{2 (x ^2 - y ^2)} \\ &  + \frac{A' (x) \big((2 x - y) \, dx - y \, dy\big) + B ' (y) \big(x \, dx + 
(x - 2 y) \, dy\big)}{(x ^2 - y ^2) ^2} \\ &  
-  \frac{\big(A (x) + B (y)\big) (x - y) \big((3 x - y) \, dx + (x - 3 y) \, dy \big)}{(x ^2 - y ^2) ^3}, \end{split} \end{equation} 
and
\begin{equation} \label{dh-} \begin{split} 
 d h _- & = - \frac{A'' (x) dx - B'' (y) dy}{2 (x ^2 - y ^2)} \\ &  
+ \frac{A' (x) \big((2 x + y) \, dx - y \, dy\big) + B ' (y) \big(- x \, dx + 
(x + 2 y) \, dy\big)}{(x ^2 - y ^2) ^2} \\ &  
-  \frac{\big(A (x) + B (y)\big) (x + y) \big((3 x + y) \, dx -  (x + 3 y) \, dy \big)}{(x ^2 - y ^2) ^3}.  \end{split} \end{equation} 
In particular:
\begin{equation} \label{Jdh} J _+  d h_+ - J _- d h _- = \left(\frac{h_+}{f_+} - \frac{h_-}{f_-}\right). \end{equation}

\begin{prop} \label{prop-killingvf} The vector fields
\begin{equation} \label{K1} \begin{split} K _1 & := J _+ {\rm grad} _g (x + y) = J _- {\rm grad} _g (x - y)  \\ & = J _+ {\rm grad} _{g _+} \left(\frac{- 1}{x + y}\right) = J _- {\rm grad} _{g_-} \left(\frac{-1}{x - y}\right) 
\end{split} \end{equation}
(which is equal to the vector field $K_1 = - \frac{1}{2} \alpha ^{\sharp}$ appearing in 
{\rm Proposition \ref{prop-killing}}), 
and
\begin{equation} \label{K2} \begin{split} K _2 & 
:= y ^2 \, J _+ {\rm grad} _g x + x ^2 \, J _+ {\rm grad} _g  y = 
y ^2 \, J _- {\rm grad} _g x -  x ^2 \, J _- {\rm grad} _g  y\\ & =
J _+ {\rm grad} _{g_+} \left(\frac{x y}{x + y}\right) = J _- {\rm grad} _{g_-} \left(\frac{- x y}{x - y}\right) \end{split} \end{equation}
are  Killing with respect to $g, g_+, g_-$ and Hamiltonian with respect to $\omega _+$ and $\omega_-$. The momenta, $\mu _1 ^+$, $\mu _2 ^+$ of $K _1, K _2$ with respect to $\omega _+$, and the momenta, $\mu _1 ^-$, $\mu _2 ^-$, of $K _1, K _2$  with respect to $\omega _-$, are  given by
\begin{equation} \label{mu} \begin{split} & \mu _1 ^+ = \frac{- 1}{x + y}, \qquad \mu _2 ^+ = \frac{x y}{x + y}, \\ & \mu _1 ^- = \frac{- 1}{x - y}, \qquad \mu _2 ^- = \frac{- x y}{x - y},  \end{split} \end{equation}
and Poisson commute with respect to $\omega _+$ and $\omega _-$,  meaning that 
$\omega _{\pm}  (K _1, K _2) = 0$, so that  $[K _1, K _2 ] = 0$ as well. 
In particular, on the open set $M _1$, the ambik\"ahler structure 
$(g_+, J_+, \omega_+)$, $(g _-, J_-, \omega _-)$ is {\it ambitoric} in the sense of {\rm \cite[Definition 3]{ACG2}}. 
\end{prop}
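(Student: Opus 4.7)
The plan is to verify the stated identities in a natural order, reducing everything to the orthogonal splitting $TM_0 = T^+ \oplus T^-$ and the relations $\tau(dx) = dx$, $\tau(dy) = -dy$ from \eqref{tauxy}. First I would check the algebraic identifications of $K_1$ and $K_2$ as gradients with respect to $g$, $g_+$, and $g_-$. Since $g_\pm = f_\pm^{-2} g$, one has ${\rm grad}_{g_\pm} h = f_\pm^2 \, {\rm grad}_g h$, so combining this with $d(-1/f_+) = f_+^{-2} df_+$ and $d(xy/(x+y)) = (x+y)^{-2}(y^2 dx + x^2 dy)$ yields the equalities between the $g$- and $g_\pm$-gradient expressions, once one uses $J_-|_{T^+} = J_+|_{T^+}$ and $J_-|_{T^-} = -J_+|_{T^-}$ (immediate from $\tau = -J_+J_- = \pm {\rm I}$ on $T^\pm$) together with ${\rm grad}_g x \in T^+$ and ${\rm grad}_g y \in T^-$ (from \eqref{tauxy}). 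Since $x+y = f_+$, the vector field $K_1$ defined here coincides with $J_+ {\rm grad}_g f_+ = -\tfrac{1}{2}\alpha^\sharp$ from Proposition \ref{prop-killing}.

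Second, the Hamiltonian property and the formulas \eqref{mu} for the momenta follow directly from the standard K\"ahler identity: if $K = J_\pm {\rm grad}_{g_\pm} \mu$ on the K\"ahler manifold $(M_1, g_\pm, J_\pm, \omega_\pm)$, then $\iota_K \omega_\pm = -d\mu$. Applied to the gradient expressions of $K_1$ and $K_2$ from the first step, this gives the claimed values of $\mu_1^\pm$ and $\mu_2^\pm$.

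Third, for the Killing property: $K_1$ is Killing with respect to $g$ by Proposition \ref{prop-killing}, and the identity $K_1(f_\pm) = g({\rm grad}_g f_\pm, J_\pm {\rm grad}_g f_\pm) = 0$ (by skew-symmetry of $J_\pm$) upgrades this to the conformal metrics $g_\pm$. For $K_2$, I would use the identification $K_2 = \tfrac{1}{2} S(K_1)$ with Jelonek's vector field from Remark \ref{rem-jelonek}: a direct computation using $\Psi_+ \circ \Psi_- = -f_+ f_- \tau$ shows that the Killing symmetric endomorphism $S = -f_+ f_- \tau + \tfrac{1}{2}(f_+^2 + f_-^2)\, {\rm I}$ from Remark \ref{rem-Skilling} acts as $2y^2\,{\rm I}$ on $T^+$ and as $2x^2\,{\rm I}$ on $T^-$, so decomposing $K_1 = J_+ {\rm grad}_g x + J_+ {\rm grad}_g y$ yields $S(K_1) = 2K_2$. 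The Killing property of $K_2$ with respect to $g$ then follows from \cite[Lemma B]{jelonek}; the invariances $K_2(f_\pm) = 0$ follow from an analogous orthogonality argument, so $K_2$ is also Killing for $g_\pm$.

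Finally, Poisson commutativity reduces to showing $K_1(\mu_2^+) = 0$ (and similarly with $\mu_2^-$). Evaluating $d\mu_2^+ = (x+y)^{-2}(y^2 dx + x^2 dy)$ on $K_1 = J_+ {\rm grad}_g x + J_+ {\rm grad}_g y$, the diagonal terms vanish by skew-symmetry of $J_+$, while the cross terms vanish because $J_+$ preserves the splitting $T^+ \oplus T^-$ and $dx$, $dy$ annihilate the opposite summand. The identity $[K_1, K_2] = 0$ then follows from the Hamiltonian formalism on a symplectic manifold, and the ambitoric conclusion is immediate from \cite[Definition 3]{ACG2}. The main obstacle is the Killing property of $K_2$: once the clean identification $K_2 = \tfrac{1}{2} S(K_1)$ is made, one can either invoke \cite{jelonek} or verify directly that the $g_+$-Hessian of $\mu_2^+$ is $J_+$-invariant, using the explicit Hessian formulas \eqref{nabladxy} from Section \ref{sorthogonal}.
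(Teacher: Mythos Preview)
Your proposal is correct and covers all parts of the statement; the one genuine difference from the paper is how you establish that $K_2$ is Killing with respect to $g$. The paper proceeds by direct computation: starting from the Hessian formulas \eqref{nabladxy}, it works out $\nabla(J_+dx)$ and $\nabla(J_+dy)$ explicitly, extracts their symmetric parts \eqref{nablaJdxyS}, and observes that these are opposite, so that both $K_1^\flat=J_+dx+J_+dy$ and $K_2^\flat=y^2J_+dx+x^2J_+dy$ have vanishing symmetrized covariant derivative. Your route --- identifying $K_2=\tfrac12 S(K_1)$ and invoking \cite[Lemma~B]{jelonek}, with $S$ Killing by Remark~\ref{rem-Skilling} --- is exactly the alternative the authors record \emph{after} the proof in Remark~\ref{rem-jelonek}. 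The paper's computation is self-contained and produces intermediate formulas (for $\nabla(J_+dx)$, $\nabla(J_+dy)$) that are reused later, e.g.\ in deriving \eqref{ddcx}--\eqref{ddcy}; your argument is shorter and more conceptual but relies on an external lemma. One stylistic point: you cite Remark~\ref{rem-jelonek}, which comes after the proposition; since you rederive $S(K_1)=2K_2$ from scratch this is not circular, but the forward reference should be dropped in favour of Remark~\ref{rem-Skilling} plus \cite{jelonek} directly. The remaining steps (gradient identifications, momenta, Poisson commutativity via the $T^+\oplus T^-$ splitting, and $[K_1,K_2]=0$) match the paper's argument.
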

\begin{proof}
In terms of $A, B$, (\ref{nabladxy}) can be rewritten as
\begin{equation} \begin{split} 
& \nabla d x = \frac{1}{4 (x ^2 - y ^2) ^2} \, \Big(2 x \, \big(A (x) + B (y)\big) + (x ^2 - y ^2) \, A' (x)\Big) \, g \\ & \qquad  - \frac{1}{4 (x ^2 - y ^2) ^2} \, \Big(2 x \, \big(A (x) + B (y)\big) - (x ^2 - y ^2) \, A' (x)\Big) \, g _{\tau} \\ & \qquad 
- \frac{x}{(x ^2 - y ^2)} \, (d x \otimes d x + d y \otimes d y) + 
\frac{y}{(x ^2 - y ^2)} \, (d x \otimes d y + d y \otimes d x) \\ &  \qquad
- \frac{x}{(x ^2 - y ^2)} \, J _+ (d x + d y) \otimes J _+ (d x + d y), \\ &
 \nabla d y = \frac{1}{4 (x ^2 - y ^2) ^2} \, \Big(- 2 y \, \big(A (x) + B (y)\big) + (x ^2 - y ^2) \, B' (y)\Big) \, g\\ & \qquad  - \frac{1}{4 (x ^2 - y ^2) ^2} \, \Big(2 y \, \big(A (x) + B (y)\big) +  (x ^2 - y ^2) \, B ' (y)\Big) \, g _{\tau} \\ & \qquad 
+ \frac{y}{(x ^2 - y ^2)} \, (d x \otimes d x + d y \otimes d y) - 
\frac{x}{(x ^2 - y ^2)} \, (d x \otimes d y + d y \otimes d x) \\ & \qquad
+  \frac{y}{(x ^2 - y ^2)} \, J _+ (d x + d y) \otimes J _+ (d x + d y).
\end{split} \end{equation}
By taking (\ref{nablaJ+})--(\ref{nablaJ-}) into account, we infer 
\begin{equation} \begin{split} \label{nablaJdx} \nabla (J _+ dx) &  = 
\frac{1}{2 (x ^2 - y ^2)} \, 
\left(\frac{(2 y - x) \, A (x) + x \, B (y)}{(x ^2 - y ^2)} + \frac{A' (x)}{2}\right) \, g (J _+ \cdot, \cdot) \\ & - \frac{1}{2 (x ^2 - y ^2)} \, \left(\frac{x A (x) + x B (y)}{(x ^2 - y ^2)} - \frac{A' (x)}{2}\right) \, g (J _-  \cdot, \cdot) \\ & 
- \frac{y \, d x \wedge J _+ dx + x \,  dy \wedge J _+ d y}{(x ^2 - y ^2)} \\ & 
+ \frac{x \, (d x \otimes J _+ d y + J _+ d y \otimes dx) + y \, (d y \otimes J _+ dx + J _+ dx \otimes dy)}{(x ^2 - y ^2)}
\end{split} \end{equation}
and
\begin{equation} \begin{split} \label{nablaJdy} \nabla (J _+ dy) &  = 
\frac{1}{2 (x ^2 - y ^2)} \, 
\left(\frac{(- y \, A (x) + (y - 2 x) \, B (y)}{(x ^2 - y ^2)} + \frac{B' (y)}{2}\right) \, g (J _+ \cdot, \cdot) \\ & - \frac{1}{2 (x ^2 - y ^2)} \, \left(\frac{y \,  A (x) + y \,  B (y)}{(x ^2 - y ^2)} +  \frac{B' (y)}{2}\right) \, g (J _-  \cdot, \cdot) \\ & 
+ \frac{y \, d x \wedge J _+ dx +   x \,  dy \wedge J _+ d y}{(x ^2 - y ^2)} 
\\ & 
-  \frac{x \, (d x \otimes J _+ d y + J _+ d y \otimes dx) + y \, (d y \otimes J _+ dx + J _+ dx \otimes dy)}{(x ^2 - y ^2)}.
\end{split} \end{equation}
In particular, the symmetric parts of $\nabla (J _+ dx)$ and   
$\nabla (J _+ dy)$ are  opposite and given by 
\begin{equation} \label{nablaJdxyS} \begin{split} 
 \big(\nabla (J _+ dx)\big) ^s =  - \big(\nabla (J _+ dy)\big) ^s  = &
\frac{x \, (d x \otimes J _+ d y + J _+ d y \otimes dx)}{(x ^2 - y ^2)} \\ &  + \frac{y \, (d y \otimes J _+ dx + J _+ dx \otimes dy)}{(x ^2 - y ^2)}.  \end{split} \end{equation}
The symmetric parts of $\nabla (J _+ dx + J _+ dy)$ and of 
$\nabla (y ^2 J _+ dx + x ^2 J _+ dy) = y ^2 \nabla (J _+ dx) + x ^2 \nabla (J _+ dy) + 2 d y \otimes J _+ dx + 2 x dx \otimes J _+ dy$ 
then clearly vanish, meaning that $K_1$ and $K _2$ are Killing with respect to $g$. In view of the expressions of $K _1, K_2$   as symplectic gradients in (\ref{K1})--(\ref{K2}), $K _1$ and $K _2$ are Hamiltonian with respect to $\omega _+$ and $\omega _-$, their momenta are those given by (\ref{mu}) and their Poisson bracket with respect to $\omega _{\pm}$ is equal to $\omega _{\pm} (K_1, K_2)$, which is zero, since $ dx$ lives in the dual of $T ^+$ and $d y$ in the dual of $T ^-$. This, in turn, implies that $K _1$ and $K _2$ commute. 
\end{proof}
%\begin{rem} {\rm It is easily checked that a vector field of the form 
%\begin{equation} \label{Z} Z  = F (x, y) \, J _+ {\rm grad} _g x + G (x, y) \, J_+ {\rm grad} _g y, \end{equation}
%is Killing with respect to $g$ if and only if it is a linear combination of 
%$K _1$  and $K _2$. Indeed, from (\ref{nablaJdxyS}) we easily check  that the symmetric part of $\nabla (J _+ (F (x, y) dx + G (x, y) dy))$ vanishes if and only if $F'_x = G' _y = 0$ --- meaning that $F = F (y)$ and $G = 
%G (x)$ ---  and if
%\begin{equation} \begin{split} & 2 x \big(F (y) - G (x)\big) + (x ^2 - y ^2) \, G' (x) = 0, \\ &
%2 y \, \big(F (y) - G (x)\big) + (x ^2 - y ^2) \, F' (y) = 0, 
%\end{split} \end{equation}
%and that the only solutions of this system are $F (t) = G (t) = k_1  t ^2 + k_2$, for real numbers $k_1, k _2$.
%} \end{rem}
%\bigskip

\begin{rem} \label{rem-jelonek} {\rm 
As already observed,   the Killing vector field $K _1$ appearing in Proposition \ref{prop-killingvf} is the restriction to $M _1$ of the smooth vector field, also denoted by $K _1$, appearing in Proposition \ref{prop-killing}, which is defined on the whole manifold $M$ by
\begin{equation} \label{K1-M} K _1 = - \frac{1}{2} \alpha ^{\sharp} = - 
\frac{1}{6} \delta \Psi. \end{equation}
Similarly, it is easily checked that $K _2$ is the restriction to $M _1$ 
of the smooth vector field, still denoted by $K _2$, defined on $M$ by
\begin{equation} \label{K2-M} \begin{split} K _2 = & - \frac{1}{8} \delta \big((f _+ ^2 - f _- ^2) \, (\Psi _+ - \Psi _-)\big) \\ & = \frac{1}{8} 
\big(\Psi  _+ - \Psi _-\big) \,  \big({\rm grad} _g (f _+ ^2  - f _- ^2)\big)
\end{split} \end{equation}
(recall that the Killing $2$-form $\varphi = \psi _+ - \psi _- = * \psi$ is co-closed).  It is also easily checked that $K _2$ and  $K _1$ are related by 
\begin{equation} \label{K1K2} K _2 = \frac{1}{2} \, S (K _1), \end{equation}
where, we recall, $S$ denotes  the {\it Killing}  symmetric endomorphism defined by (\ref{S}) in Remark \ref{rem-Skilling}; this is because, on the dense open subset $M _0$, $S$ can be rewritten as 
\begin{equation} S = - (x ^2 - y ^2) \, \tau + (x ^2 + y ^2) \, {\rm I}, \end{equation}
whereas $K _1 ^{\flat} = J _+ (d x + d y)$, so that $S (K _1 ^{\flat}) = 
2 y ^2 \, J _+ dx + 2 x ^2 \, J _+ dy = 2 K _2 ^{\flat}$; we thus get (\ref{K1K2}) on $M _0$, hence on $M$. In view of (\ref{K1K2}),  the fact that $K _2$ is Killing can then be alternatively deduced from \cite[Lemma B]{jelonek}, cf. also the proof of \cite[Proposition 11 (iii)]{ACG2}.
} \end{rem} 
In view of the above, we eventually get the following rough classification:
\begin{prop} \label{prop-f} For any connected, oriented, $4$-dimensional Riemannian manifold $(M, g)$ admitting a non-parallel $*$-Killing $2$-form $\psi$,  the open subset $M _S$ defined by 
{\rm \eqref{MS}} is either empty or dense and we have one of the following three exclusive possible cases:
\begin{enumerate}

\item $M _S$ is dense; the vector fields $K_1, K_2$ are Killing  and independent on a dense open set of $M$, or
\smallskip

\item $M _S$ is dense;   the vector fields $K_1, K_2$ are Killing and $K _2 = c \, K _1$, for some non-zero  real number $c$, or

\smallskip

\item $M _S$ is empty, i.e. $\psi$ is decomposable everywhere; then, $K _2$ is identically zero, whereas $K _1$ is non-identically zero and is not a Killing vector field in general.

\end{enumerate}

\end{prop}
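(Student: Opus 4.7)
The plan is to first establish the dichotomy that $M_S$ is either empty or dense; the classification into cases (1)--(3) then follows routinely. The crucial observation is that the globally defined smooth vector field $K_2$ of (\ref{K2-M}) admits a considerable simplification. Since $\varphi = \psi_+ - \psi_- = *\psi$ is co-closed (as $d\psi = 0$), the identity $\delta(h\,\omega) = h\,\delta\omega - \iota_{\mathrm{grad}_g h}\,\omega$ yields
\begin{equation*}
K_2 = \tfrac{1}{8}\,(\Psi_+ - \Psi_-)\bigl(\mathrm{grad}_g(f_+^2 - f_-^2)\bigr).
\end{equation*}
In particular $K_2 \equiv 0$ on $U := \mathrm{int}(M \setminus M_S)$, where $f_+^2 - f_-^2$ is locally constant equal to $0$. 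By Proposition \ref{prop-killingvf}, $K_2$ is Killing on $M_1 = M_0 \cap M_S$. The open set $M_1 \cup U$ is dense in $M$, because its complement is contained in $(M \setminus M_0) \cup \partial M_S$, the union of two closed nowhere-dense subsets (using Lemma \ref{lemma-M0dense}). Continuity of $\mathcal{L}_{K_2}g$ then promotes $K_2$ to a Killing vector field on all of $M$.

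To establish the dichotomy, I would argue by contradiction: suppose $M_S \neq \emptyset$ and $U \neq \emptyset$. Then $K_2$ is a global Killing vector field vanishing on the non-empty open set $U$, hence identically zero on the connected manifold $M$ by unique continuation for Killing vector fields. However, on $M_1$, combining Propositions \ref{prop-killingvf} and \ref{prop-separate} with the orthogonality of $J_+\,dx$ and $J_+\,dy$ gives
\begin{equation*}
|K_2|^2 = \frac{y^4\, A(x) + x^4\, B(y)}{x^2 - y^2},
\end{equation*}
which cannot vanish identically: otherwise $A \equiv B \equiv 0$, hence $dx = dy = 0$, hence $df_+ = 0$ and $\alpha = -2J_+\,df_+ = 0$ on $M_1$; the pair $(\psi, \alpha)$ satisfies a closed first-order linear system (combining $\nabla\psi = \alpha \wedge \cdot$ with (\ref{nablaalpha})), so by uniqueness of its parallel extension $\psi$ would be parallel throughout $M$, contradicting the standing hypothesis. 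This contradiction forces $M_S$ to be empty or dense.

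The case analysis is now straightforward. If $M_S = \emptyset$, then $f_+ \equiv f_-$ on $M$, so $\mathrm{grad}_g(f_+^2 - f_-^2) \equiv 0$ and $K_2 \equiv 0$, while $K_1 = -\tfrac{1}{2}\alpha^\sharp \not\equiv 0$ (else $\psi$ would be parallel); this is case (3). If $M_S$ is dense, then so is $M_1$, and by the same continuity argument applied to $\mathcal{L}_{K_1}g$ (using Proposition \ref{prop-killing}), both $K_1$ and $K_2$ are Killing on $M$. If they are linearly independent on a dense open subset, we are in case (1). Otherwise $K_1 \wedge K_2 \equiv 0$ on $M$; since $K_1 \not\equiv 0$, the zero set of the Killing field $K_1$ has codimension at least $2$, so $\Omega := \{K_1 \neq 0\}$ is a connected dense open subset of $M$, and on $\Omega$ we may write $K_2 = \lambda K_1$ for a smooth function $\lambda$. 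Differentiating and using that both $\nabla K_1^\flat$ and $\nabla K_2^\flat$ are skew gives $d\lambda(X)\,K_1^\flat(Y) + d\lambda(Y)\,K_1^\flat(X) = 0$ for all $X,Y$; pairing with $K_1$ forces $d\lambda = 0$, so $\lambda = c$ is constant on $\Omega$ and hence $K_2 = cK_1$ on all of $M$ by continuity. Finally $c \neq 0$, since $K_2 \not\equiv 0$ on $M$ by the non-vanishing of $|K_2|^2$ on $M_1$ established above. This is case (2). The main obstacle is the density-plus-continuity argument of the first paragraph, needed to upgrade $K_2$ (and later $K_1$) from a Killing vector field on $M_1$ to a Killing vector field on $M$ before unique continuation can be invoked.
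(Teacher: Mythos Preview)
Your overall architecture matches the paper's: show $K_2$ is Killing on all of $M$ by density, then use unique continuation for Killing fields to run the dichotomy. However, there is a genuine gap in your contradiction step. From $K_2\equiv 0$ you correctly deduce $dx=dy=0$ on $M_1$ (and you do not actually need $A,B$ for this: the identity $|K_2|^2 = y^4|dx|^2 + x^4|dy|^2$ holds on all of $M_1$ directly from (\ref{K2}), without invoking Proposition~\ref{prop-separate}). The problem is the sentence ``by uniqueness of its parallel extension $\psi$ would be parallel throughout $M$''. The closed first-order system for $(\psi,\alpha)$ only says that the pair is $D$-parallel for a suitable connection $D$ on $\Lambda^2T^*M\oplus T^*M$; it tells you the solution is determined by its value at one point, but it does \emph{not} say that the $\alpha$-component stays zero along $D$-parallel transport out of $M_1$. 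Concretely, the $D$-transport equation for $\alpha$ reads $\nabla_X\alpha = C(R,\psi,X)$, and this curvature term need not vanish when $\alpha=0$. So ``$\alpha=0$ on the open set $M_1$'' does not propagate to $M$ by this mechanism.

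The paper closes the dichotomy by a different, more direct route: once $K_2\equiv 0$, apply $\Psi$ to the global formula (\ref{K2-M}) to obtain $0=\Psi(K_2)=\tfrac{1}{8}(f_+^2-f_-^2)\,\mathrm{grad}_g(f_+^2-f_-^2)$ on all of $M$, whence the smooth function $(f_+^2-f_-^2)^2$ is constant and therefore identically zero, contradicting $M_S\neq\emptyset$. Your argument can be repaired in the same spirit without invoking the prolongation: from $dx=dy=0$ on $M_1$ you get $d(f_+^2-f_-^2)=0$ on $M_1$, and trivially $d(f_+^2-f_-^2)=0$ on $U$; since $M_1\cup U$ is dense and $f_+^2-f_-^2$ is smooth on $M$, continuity gives $d(f_+^2-f_-^2)\equiv 0$, hence $f_+^2-f_-^2$ is constant, hence zero. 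The remainder of your case analysis (constancy of $\lambda$, and $c\neq 0$ via $K_2\not\equiv 0$ when $M_S$ is dense, where $M_1$ \emph{is} dense so continuity of $\alpha$ suffices) is fine.
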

\begin{proof} Being Killing on $M _0 \cap M _S$ and zero on any open set where $f _+ = f _-$, $K _2$ is Killing everywhere on $M$. We next observe that, for any ${\sf x}$ in  $M _S$, $K _2 ({\sf x}) = 0$ if and only if $K _1 ({\sf x})  = 0$, as 
readily follows from (\ref{K1K2}) and from the fact that $S$ is invertible if and only if  ${\sf x}$ belongs to $M _S$, as the eigenvalues of $S$ are equal to 
$\frac{(f _+ + f _-) ^2}{2}$ and $\frac{(f_+ - f _-) ^2}{2}$. 

Suppose now that $M _S$ is not dense in $M$, i.e. that $M \setminus M _S$ contains some  non-empty open subset 
$V$; then, $K _2$ vanishes on $V$, hence vanishes identically on $M$, 
as $K _2$ is Killing; from (\ref{K2-M}), we then infer $0 = \Psi (K _2) = \frac{1}{8} (f _+ ^2 - f _- ^2) {\rm grad} _g (f _+ ^2 - f _- ^2)$, which implies that the (smooth) function $(f _+ ^2 - f _- ^2)^2$ is {\it constant} on $M$, hence identically zero, meaning that $M _S$ is empty.  If $M _S$ is empty, then  $f _+ = f _-$ everywhere (equivalently, $\psi \wedge \psi$ is identically zero); it 
follows that $K _2$ is identically zero,  whereas $K _1$, which is not identically zero since $\psi$ is not parallel, is not Killing in general, cf. Section \ref{sdecomposable}.

If $M_S$ is dense, then $K _1$ and $K _2$ are both Killing vector fields  on $M$, hence either independent on some dense open subset of $M$ or dependent 
everywhere and, by the above discussion,  $K _2$ is then a constant, non-zero multiple of $K _1$.
\end{proof}

In the next sections we successively consider the three cases listed in Proposition \ref{prop-f}.

\section{The ambitoric Ansatz} \label{sambitoric} 

In this section, we assume that $M _S$ is dense and that $K _1, K _2$ are independent on some dense open set $\mathcal{U}$. In the remainder of this section, we focus our attention on $\mathcal{U}$, i.e. we assume that $d x$ and $d y$ are independent everywhere --- equivalently, {\rm $\tau(df)  \neq \pm df$} 
everywhere --- so that $\{ dx, J _+ d x = J _- dx, dy, J _+ dy = - J _- dy \}$ form a direct  orthogonal coframe. By Proposition \ref{prop-separate}, the metric $g$ and the K\"ahler forms $\omega _+$, $\omega _-$ can then be written as
\begin{equation} \begin{split} \label{g-prim}g & = (x ^2 - y ^2) \, \left(\frac{d x \otimes d x}{A (x)} + \frac{d y \otimes dy}{B (y)}\right) \\ & 
+ (x ^2 - y ^2) \, \left(\frac{J _+ d x \otimes J _+ d x}{A (x)} + \frac{J_+ d y \otimes J _+ dy}{B (y)}\right), \end{split} \end{equation}
\begin{equation} \label{omega-prim}
\begin{split} &\omega _+ = \frac{(x - y)}{(x + y)}  \, \left(\frac{d x \wedge J _+ dx}{A (x)} + \frac{d y \wedge J _+ dy}{B (y)}\right), \\ &
\omega  _- = \frac{(x + y)}{(x - y)}  \, \left(\frac{d x \wedge J _+ dx}{A (x)} -  \frac{d y \wedge J _+ dy}{B (y)}\right), \end{split} \end{equation}
and we also have:
\begin{prop} The functions ${\rm Scal} = 4 a$ and $b$ appearing in the expression {\rm (\ref{Ric})}  of the Ricci tensor of $g$ are given by:
\begin{equation} \label{scal-AB} {\rm Scal} = - \frac{A'' (x) + B '' (y)}{(x ^2 - y ^2)}, \end{equation}
and
\begin{equation} \label{b-AB} b = - \frac{A'' (x) -  B''(y)}{4 (x ^2 - y ^2)} + \frac{x A' (x) + y B ' (y)}{(x ^2 - y ^2)^2} - \frac{A (x) + B (y)}{(x^2 - y^2)^2}. \end{equation}
\end{prop}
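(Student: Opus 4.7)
The plan is to apply the Bochner--Weitzenb\"ock identity $\nabla^{*}\nabla K_{1}={\rm Ric}(K_{1})$ for the Killing vector field $K_{1}=-\tfrac{1}{2}\alpha^{\sharp}$ from Proposition~\ref{prop-killing}, and to decompose both sides along the splitting $TM_{1}=T^{+}\oplus T^{-}$. By \eqref{nablaalphasharp} we have $\nabla K_{1}=-\tfrac{1}{2}(h_{+}J_{+}+h_{-}J_{-})$. Differentiating once more in a normal orthonormal frame $\{e_{i}\}$ at a point and using \eqref{nablaJ+}--\eqref{nablaJ-} to expand the brackets $[\tfrac{df_{\pm}}{f_{\pm}}\wedge e_{i},J_{\pm}]e_{i}$, a short calculation produces the key intermediate identity $\sum_{i}(\nabla_{e_{i}}J_{\pm})e_{i}=\tfrac{2}{f_{\pm}}K_{1}$, whence
\[
{\rm Ric}(K_{1})=\tfrac{1}{2}\bigl(J_{+}\,{\rm grad}_{g}h_{+}+J_{-}\,{\rm grad}_{g}h_{-}\bigr)+\Bigl(\tfrac{h_{+}}{f_{+}}+\tfrac{h_{-}}{f_{-}}\Bigr)K_{1}.
\]

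By \eqref{K1}, $K_{1}=J_{+}\,{\rm grad}_{g}x+J_{+}\,{\rm grad}_{g}y$ splits into its $T^{+}$ and $T^{-}$ components. Since $h_{\pm}$ depends only on $x,y$, I can write $dh_{\pm}=P_{\pm}\,dx+Q_{\pm}\,dy$; using that $J_{-}$ coincides with $J_{+}$ on $T^{+}$ and with $-J_{+}$ on $T^{-}$, I get $J_{+}\,{\rm grad}_{g}h_{+}+J_{-}\,{\rm grad}_{g}h_{-}=(P_{+}+P_{-})\,J_{+}\,{\rm grad}_{g}x+(Q_{+}-Q_{-})\,J_{+}\,{\rm grad}_{g}y$. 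Since the ansatz \eqref{Ric} reads ${\rm Ric}|_{T^{\pm}}=(a\pm b)\,{\rm I}$, matching coefficients of $J_{+}\,{\rm grad}_{g}x$ and $J_{+}\,{\rm grad}_{g}y$ in ${\rm Ric}(K_{1})$ yields
\[
a+b=\tfrac{P_{+}+P_{-}}{2}+\tfrac{h_{+}}{f_{+}}+\tfrac{h_{-}}{f_{-}},\qquad a-b=\tfrac{Q_{+}-Q_{-}}{2}+\tfrac{h_{+}}{f_{+}}+\tfrac{h_{-}}{f_{-}}.
\]

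Adding and subtracting gives ${\rm Scal}=4a$ and $b$ respectively. Reading off $P_{\pm},Q_{\pm}$ from \eqref{dh+}--\eqref{dh-} and substituting \eqref{h+-} for $h_{\pm}/f_{\pm}$, then simplifying, reduces these to \eqref{scal-AB} and \eqref{b-AB}. The cancellations are the punchline: in the scalar-curvature expression, the contributions proportional to $xA'-yB'$ and to $(A+B)(x^{2}+y^{2})$ coming from $P_{+}+P_{-}+Q_{+}-Q_{-}$ are exactly cancelled by the corresponding terms in $4(h_{+}/f_{+}+h_{-}/f_{-})$, leaving only $-(A''+B'')/(x^{2}-y^{2})$; for $b$, the binomial identity
\[
-(x-y)(3x-y)-(x+y)(3x+y)+(x-y)(x-3y)+(x+y)(x+3y)=-4(x^{2}-y^{2})
\]
collapses the $(A+B)$-contribution to $-(A+B)/(x^{2}-y^{2})^{2}$.

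The \textbf{main obstacle} is algebraic bookkeeping: each of $P_{\pm},Q_{\pm}$ has three terms with denominators $(x^{2}-y^{2})^{k}$ for $k=1,2,3$, and the simplification is bulky. A convenient consistency check is to repeat the argument with the second Killing vector field $K_{2}=y^{2}J_{+}\,{\rm grad}_{g}x+x^{2}J_{+}\,{\rm grad}_{g}y$ in place of $K_{1}$: the resulting linear system in $a,b$ must agree with the one obtained from $K_{1}$, which provides independent verification of \eqref{scal-AB} and \eqref{b-AB}.
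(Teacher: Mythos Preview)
Your argument is correct and is essentially the paper's own proof: both apply the Bochner identity ${\rm Ric}(\alpha^{\sharp})=\delta\nabla\alpha^{\sharp}$ to the Killing field $K_{1}=-\tfrac12\alpha^{\sharp}$, expand using $\nabla\alpha^{\sharp}=h_{+}J_{+}+h_{-}J_{-}$ together with \eqref{nablaJ+}--\eqref{nablaJ-} (which gives $\delta J_{\pm}=\tfrac{1}{f_{\pm}}\alpha$, i.e.\ your $\sum_{i}(\nabla_{e_{i}}J_{\pm})e_{i}=\tfrac{2}{f_{\pm}}K_{1}$), and then match against ${\rm Ric}=a\,{\rm I}+b\,\tau$ using \eqref{h+-}--\eqref{dh-}. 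The only organisational difference is that the paper first invokes the identity \eqref{Jdh} to rewrite everything in terms of $h_{+}$ alone (obtaining $a=\tfrac12(\partial_{x}h_{+}+\partial_{y}h_{+})+\tfrac{2h_{+}}{x+y}$ and $b=\tfrac12(\partial_{x}h_{+}-\partial_{y}h_{+})$), whereas you keep the symmetric combination $P_{+}+P_{-}$, $Q_{+}-Q_{-}$; the two are equivalent and lead to the same final simplification.
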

\begin{proof} Since $\alpha ^{\sharp}$ is Killing, the Bochner formula reads:
\begin{equation} \label{bochner} {\rm Ric} (\alpha ^{\sharp}) = \delta \nabla \alpha ^{\sharp} \end{equation}
whereas, by (\ref{Ric}), 
\begin{equation} {\rm Ric} (\alpha ^{\sharp}) = a \, \alpha ^{\sharp} + b \, \tau (\alpha ^{\sharp}). \end{equation}
By using
\begin{equation} \label{alphadeltaJ} \alpha = f _+ \, \delta J _+ = f _- \, \delta J _-, \end{equation} 
which easily follows from (\ref{nablaJ+})--(\ref{nablaJ-}), we infer from (\ref{nablaalphasharp}) that
\begin{equation} \label{deltanablaalpha} \delta \nabla \alpha ^{\sharp} = \frac{h _+}{f_+} \, \alpha + \frac{h_-}{f_-} \, \alpha - J _+ dh _+ - J _- d h _-. \end{equation}
By putting together (\ref{bochner}), (\ref{deltanablaalpha})
and (\ref{Jdh}), we get
\begin{equation} a \, \alpha + b \, \tau (\alpha) = 2 \left(\frac{h_+}{f_+} \, \alpha - J _+ d h _+ \right) = 2 \left(\frac{h_-}{f_-} \, \alpha - J _- d h _-\right),  \end{equation}
hence
\begin{equation} \label{dh} \begin{split} & 
d h _+ = \left(a - 2 \frac{h_+}{f_+} + b\right) \, d x 
+ \left(a - 2 \frac{h_+}{f_+} - b\right) \, d y, 
\\ &
dh _- = \left(a - 2 \frac{h_-}{f_-} + b\right) d x 
+ \left(- a + 2 \frac{h_-}{f_-} + b\right) d y. \end{split} \end{equation}
We thus get
\begin{equation} \begin{split} 
& a = \frac{1}{2} \left(\frac{\partial h _+}{\partial x} + \frac{\partial h_+}{\partial y}\right) + \frac{2 h _+}{x + y} = 
\frac{1}{2} \left(\frac{\partial h _-}{\partial x} - \frac{\partial h_-}{\partial y}\right) + \frac{2 h _-}{x + y}
\\ &
b = \frac{1}{2} \left(\frac{\partial h _+}{\partial x} -  \frac{\partial h_+}{\partial y}\right) = \frac{1}{2} \left(\frac{\partial h _-}{\partial x} +  \frac{\partial h_-}{\partial y}\right).
 \end{split} \end{equation}
By using (\ref{h+-}), we obtain (\ref{scal-AB}) and (\ref{b-AB}). 
\end{proof}

Recall that a  function $\varphi$ is called $J _+$-{\it pluriharmonic} if $d (J _+ d \varphi) = 0$ and $J _-$-{\it pluriharmonic} if $d (J _- d \varphi) = 0$.
\begin{prop} \label{prop-t} {\rm (i)} Up to a multiplicative and an additive constant, the function 
\begin{equation} \label{varphi+} \varphi _+ = \int ^x \frac{dt}{A (t)} - \int ^y \frac{dt}{B (t)} \end{equation}
is the only $J _+$-pluriharmonic function of the form $\varphi = \varphi (x, y)$. 
\smallskip

{\rm (ii)} Up to a multiplicative and an additive constant, the function 
\begin{equation} \label{varphi-} \varphi _- = \int ^x \frac{dt}{A (t)} +  \int ^y \frac{dt}{B (t)} \end{equation}
is the only $J _-$-pluriharmonic function of 
the form $\varphi = \varphi (x, y)$. 
\end{prop}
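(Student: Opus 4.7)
My plan is to reduce the $J_+$-pluriharmonicity condition $d(J_+d\varphi) = 0$ to a PDE system on $\varphi(x,y)$ by first computing $d(J_+dx)$ and $d(J_+dy)$ explicitly, and then decomposing $d(J_+d\varphi)$ in the orthogonal coframe $\{dx, J_+dx, dy, J_+dy\}$. The main ingredient is the pair of formulas (\ref{nablaJdx})--(\ref{nablaJdy}): taking their antisymmetric parts gives the exterior derivatives. Using that $g(J_+\cdot,\cdot) = (x+y)^2 \omega_+$ and $g(J_-\cdot,\cdot) = (x-y)^2 \omega_-$ together with (\ref{omega-prim}), one sees that these bilinear forms both expand solely in $dx \wedge J_+dx$ and $dy \wedge J_+dy$. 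After simplification, $d(J_+dx)$ and $d(J_+dy)$ acquire the form
$$d(J_+dx) = \Bigl(\tfrac{A'}{A} - \tfrac{2x}{x^2-y^2}\Bigr)\, dx \wedge J_+dx + \tfrac{2yA}{B(x^2-y^2)}\, dy \wedge J_+dy,$$
and analogously for $d(J_+dy)$, with off-diagonal coefficients cross-verified by imposing $d\omega_{\pm} = 0$.

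Next, for $\varphi = \varphi(x,y)$ I write $J_+d\varphi = \varphi_x J_+dx + \varphi_y J_+dy$ and expand
$$d(J_+d\varphi) = \varphi_{xx}\, dx\wedge J_+dx + \varphi_{xy}\bigl(dy\wedge J_+dx + dx\wedge J_+dy\bigr) + \varphi_{yy}\, dy\wedge J_+dy + \varphi_x\, d(J_+dx) + \varphi_y\, d(J_+dy).$$
Setting each basis coefficient to zero, the components along $dy\wedge J_+dx$ and $dx\wedge J_+dy$ read off immediately as $\varphi_{xy} = 0$, forcing $\varphi(x,y) = \alpha(x) + \beta(y)$. Substituting into the remaining two components and letting $U(x) := A(x)\alpha'(x)$ and $V(y) := B(y)\beta'(y)$, the equations collapse to the coupled system
$$(x^2 - y^2)\, U'(x) = 2x\,(U + V), \qquad (x^2 - y^2)\, V'(y) = -2y\,(U + V).$$

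The final step is to extract $\varphi_+$ from this system by separation of variables: the ratios $U'(x)/(2x) = -V'(y)/(2y) = (U+V)/(x^2-y^2)$ must each be constant, and imposing consistency pins down the constants to yield $U = c$, $V = -c$ for some $c \in \mathbb{R}$. Integration then gives $\alpha'(x) = c/A(x)$, $\beta'(y) = -c/B(y)$, so $\varphi = c\,\varphi_+ + \text{const}$, proving (i). For (ii), the proof is identical up to the sign: since $J_-dx = J_+dx$ while $J_-dy = -J_+dy$, replacing $J_+$ by $J_-$ flips the sign of the $\beta$-contribution in $J_-d\varphi$, which propagates through the PDE to give $U = c$, $V = c$, and hence $\varphi = c\,\varphi_- + \text{const}$. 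The main obstacle is carrying out the separation of variables carefully enough to identify the correct uniqueness, since a naive analysis of either PDE alone produces a two-parameter family, and only the combined compatibility of the system pins down the solution claimed.
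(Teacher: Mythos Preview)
Your approach coincides with the paper's: compute $d(J_+dx)$ and $d(J_+dy)$ from the antisymmetric parts of (\ref{nablaJdx})--(\ref{nablaJdy}), read off $\varphi_{xy}=0$, and reduce to a coupled ODE system. Your substitution $U=A\alpha'$, $V=B\beta'$ is a clean repackaging of the paper's equations for $C,D$, and the resulting system $(x^2-y^2)\,U'=2x(U+V)$, $(x^2-y^2)\,V'=-2y(U+V)$ is correct.

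The gap is precisely where you flag it. From $U'(x)/(2x)=-V'(y)/(2y)$ you rightly infer a common constant $\ell$, whence $U=\ell x^2+p$ and $V=-\ell y^2+q$; plugging back into $(U+V)/(x^2-y^2)=\ell$ forces only $p+q=0$, \emph{not} $\ell=0$. The general solution is thus the \emph{two}-parameter family $U=\ell x^2+c$, $V=-\ell y^2-c$, so your assertion that ``consistency pins down $U=c$, $V=-c$'' is incorrect: the combined system does \emph{not} cut the dimension down to one. The extra solution (any $\ell\neq 0$) is the function $\varphi(x,y)=\int^x\tfrac{t^2\,dt}{A(t)}-\int^y\tfrac{t^2\,dt}{B(t)}$, which satisfies $J_+d\varphi=ds$ by (\ref{ds-bis}) and is therefore genuinely $J_+$-pluriharmonic --- so the uniqueness claim of the proposition is in fact false as stated. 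The paper's own proof shares this oversight (it asserts without argument that $C'=k/A$, $D'=-k/B$ is ``the unique solution''); since the sequel only needs the \emph{existence} of $\varphi_+$ to define the coordinate $t$, and then constructs $s$ independently via (\ref{ds}), nothing downstream is affected.
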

\begin{proof} 
From (\ref{nablaJdx})--(\ref{nablaJdy}), we readily infer the following expression of $d (J_{\pm} d x)$ and $d (J _{\pm} dy)$:
\begin{equation} \label{ddcx} \begin{split} 
d (J _+ d x)  = d (J _- dx) & =   \left(\frac{A' (x)}{A (x)} - \frac{2 x}{x ^2 - y ^2}\right) \, d x \wedge J _+ dx \\ &    + \frac{2 y \, A (x)}{(x ^2 - y ^2) \, B (y)} \, d y \wedge J _+ dy,  \end{split} \end{equation}
and
\begin{equation} \label{ddcy} \begin{split}  
d (J _+ d y) = - d (J _- dy) = &   - \frac{2 x \, B (y)}{(x ^2 - y ^2) \, A (x)} \, d x \wedge J_+ dx \\ &   + \left(\frac{B ' (y)}{B (y)} + \frac{2 y}{x ^2 - x ^2}\right) \, dy \wedge J _+ d y. 
\end{split} \end{equation}
Let $\varphi = \varphi (x, y)$ be any function of $x, y$ and denote by $\varphi _x, \varphi _y, \varphi _{x x}, etc...$ its derivative with respect to $x, y$. Then
\begin{equation} \begin{split}
d (J _+ d \varphi) & = \varphi _x \, d (J _+ d x)  + \varphi _y \, d (J _+ dy) \\ & + \varphi _{x x} \, d x \wedge J_+ d x + \varphi _{y y } \, d y \wedge J _+ d y \\ & + \varphi _{x y} \, (d x \wedge J _+ dy + d y \wedge J _+ d x). \end{split} \end{equation}
By (\ref{ddcx})--(\ref{ddcy}), $\varphi$ is $J _+$-pluriharmonic if and only if $\varphi _{x y} = 0$ --- meaning that $\varphi$ is of the form
$\varphi (x, y) = C (x) + D (y)$ --- 
and $C, D$ satisfy 
\begin{equation} \begin{split} 
& C'' (x) + \left(\frac{A' (x)}{A (x)} - \frac{2 x}{x ^2 - y ^2}\right)  C' (x) - \frac{2 x \, B (y) \, D ' (y)}{(x ^2 - y ^2) \, A (x)} = 0, \\ & 
D'' (y) + \left(\frac{B' (y)}{B (y)} + \frac{2 y}{x ^2 - y ^2}\right)  
D' (y) +  \frac{2 y \, A (x) \, C ' (x)}{(x ^2 - y ^2) \, B (y)} = 0.
\end{split} \end{equation}
It is easily checked that the pair $C' (x) = \frac{k}{A (x)}, D' (y) = - \frac{k}{B (y)}$, for some constant $k$,  is the unique solution to this system. We thus get (\ref{varphi+}). We check (\ref{varphi-}) similarly.
\end{proof}
In view of Proposition  \ref{prop-t}, we (locally) define $t$, up to an additive constant, by
\begin{equation} J _+ d \varphi _+ = J _- d \varphi _- = -  d t,  
\end{equation} 
and we denote by $\eta$ the $1$-form defined by 
$\eta = - \frac{\tau (d t)}{2}$. 
We then have
\begin{equation} d t = 
- \frac{J _+ d x}{A (x)} +  \frac{J _+ d y}{B (y)}, \quad \eta = \frac{1}{2} \left(\frac{J _+ d x}{A (x)} + \frac{J _+ d y}{B (y)}\right),   \end{equation}
hence
\begin{equation} \label{defJ+} \begin{split} & J _+ d x = J _- d x = A (x) \, 
(\eta -  \frac{d t}{2}), \\ & 
J _+ d y = - J _- d y = B (y)\, (\eta + \frac{d t}{2}).
\end{split} \end{equation}
Notice that
\begin{equation} \label{dxdydteta} dx \wedge dy \wedge \eta \wedge dt
= \frac{v _g}{(x ^2 - y ^2) ^2}, \end{equation}
where $v _g$ denotes the volume form of $g$ with respect to the orientation induced by $J _+$. 
By using (\ref{ddcx})--(\ref{ddcy}), then  (\ref{defJ+}), we get
\begin{equation} \label{deta} \begin{split} d \eta & =  
\frac{1}{(x ^2 - y ^2)} \, \left(\frac{- 2 x \, dx \wedge J _+ dx}{A (x)}  +  \frac{2 y \, d y \wedge J _+ dy}{B (y)}\right) \\ & 
=  \frac{1}{(x ^2 - y ^2)} \, \big(- 2 x d x \wedge (\eta -  \frac{d t}{2}) + 2 y d y \wedge (\eta + \frac{dt}{2})\big) \\ & = 
\frac{1}{(x ^2 - y ^2)} \, \big(- (2 x dx -  2 y dy) \wedge \eta + (x dx +  y dy) \wedge dt\big). \end{split} \end{equation}
It follows that $(x ^2 - y ^2) \, \eta - (x ^2 + y ^2) \, \frac{dt}{2}$ is closed, hence locally defines a function $s$ by
\begin{equation} \label{ds} (x ^2 - y ^2) \, \eta -  (x ^2 + y ^2) \, \frac{dt}{2} = d s, \end{equation} 
hence
\begin{equation} \label{ds-bis} d s = \frac{x ^2 \, J _+ dx}{A (x)} - \frac{y ^2 \, J _+ d y}{B (y)}. \end{equation}
We thus have:
\begin{equation}  \eta -  \frac{d t}{2} = 
\frac{d s + y ^2 \, dt}{(x ^2 - y ^2)}, \quad  
\eta +  \frac{d t}{2} = 
\frac{d s + x ^2 \, dt}{(x ^2 - y ^2)}, \end{equation}
whereas (\ref{dxdydteta}) can be rewritten as
\begin{equation} dx \wedge dy \wedge d s  \wedge d t = \frac{v _g}{(x ^2 - y ^2) }, \end{equation}
showing  that $d x, d y, d s, d t$ form a (direct) coframe.
In view of (\ref{g-prim}), (\ref{omega-prim}), (\ref{defJ+}), on the open set where $x, y, s, t$ form a coordinate system, the metrics $g, g _+, g _-$, the complex structures $J_+, J_-$, the involution $\tau$  and the K\"ahler forms $\omega _+, \omega _-$ have the following expressions:
\begin{equation} \label{A-g} \begin{split} g & = (x ^2 - y ^2) \, \left(\frac{d x \otimes d x}{A (x)} + \frac{d y \otimes d y}{B (y)}\right) \\ & \quad 
+ \frac{A (x)}{(x ^2 - y ^2)} \, (d s + y ^2 \, dt) \otimes 
(d s +  \, y ^2 \, dt) \\ & \quad 
+ \frac{B (y)}{(x ^2 - y ^2)} \, (d s + x ^2 \, dt) \otimes 
(d s + x ^2 \, dt)\\ & = (x + y) ^2 \, g _+ = (x - y) ^2 \, g _- \end{split} \end{equation}
\smallskip

\begin{equation} \label{A-J} \begin{split} & J _+ dx = J _- dx = \frac{A (x)}{(x ^2 - y ^2)} \, (d s + y ^2 \, dt) \\ & 
J _+ dy = - J _- dy = \frac{B (y)}{(x ^2 - y ^2)} \, (d s + x ^2 \, dt) \\ &
J _+ dt = \frac{d x}{A (x)} - \frac{d y}{B (y)}, \quad 
J _- dt = \frac{d x}{A (x)} + \frac{d y}{B (y)} \\ &
J _+ d s = - \frac{x ^2 \, dx}{A (x)} + \frac{y ^2 \, dy}{B (y)}, \quad J _- d s = - \frac{x ^2 \, dx}{A (x)} - \frac{y ^2 \, dy}{B (y)}
\end{split} \end{equation}

\smallskip

\begin{equation} \label{A-tau} \begin{split} & \tau (d x) = dx, \qquad \tau (d y) = - d y \\ &
\tau (d s) = \frac{(x ^2 + y ^2)}{(x ^2 - y ^2)} \, d s + \frac{2 x ^2 y ^2}{(x ^2 - y ^2)} \, dt \\ & \tau (d t) = \frac{- 2}{(x ^2 - y ^2)} \, ds  - \frac{(x ^2 + y ^2)}{(x ^2 - y ^2)} \, dt, \end{split} \end{equation}
\smallskip

\begin{equation} \label{A-omega} \begin{split} & \omega _+ =  
\frac{d x \wedge (d s + y ^2 \, dt) +
 d y \wedge (d s + x ^2 \, dt)}{(x + y) ^2} \\ 
& \omega _- = \frac{d x \wedge (d s + y ^2 \, dt) - dy \wedge (d s + x ^2 \, dt)}{(x - y) ^2} \end{split} \end{equation}
\smallskip

whereas, it follows from (\ref{psi}) that  the $*$-Killing $2$-form $\psi$ is given by

\begin{equation} \label{A-psi}   \psi  = 
2 x \, d x \wedge (d s + y ^2 \, dt) +
2 y   \, dy \wedge (d s + x ^2 \, dt).  \end{equation}
Notice that, in view of (\ref{A-g}), the (local) vector fields 
$\frac{\partial}{\partial s}$ and $\frac{\partial}{\partial t}$ are Killing with respect to $g$ and respectively coincide with the Killing vector fields $K _1$ and $K _2$ appearing in Proposition 
\ref{prop-killingvf} on their domain of definition.  

\smallskip

It turns out that the expressions of $(g _+ = (x + y) ^{-2} \, g, J_+, \omega _+)$ and $(g _- = (x - y) ^{-2} \, g, J_-, \omega _-)$ just obtained coincide with the {\it ambitoric Ansatz} described in \cite[Theorem 3]{ACG2}, in the case  
 where the quadratic polynomial is $q (z) = 2 z$, 
which is the {\it normal form}  of the ambitoric Ansatz in the {\it hyperbolic} case considered in \cite[Paragraph 5.4]{ACG2}.

 \smallskip

The discussion in this section  can then be summarized as follows: 
\begin{thm} \label{thm-ambitoric} Let $(M, g)$ be a connected, oriented, $4$-dimensional manifold admitting a non-parallel,  $*$-Killing $2$-form $\psi = \psi _+ + \psi _-$ and assume that the open set, $M _S$,  where $|\psi _+| \neq |\psi _-|$ is dense, cf. {\rm Proposition \ref{prop-f}}.  On the open subset, $\mathcal{U}$, of $M _S$ where  $\psi _+$ and $\psi _-$ have no zero and $d |\psi _+| \wedge d |\psi _-| \neq 0$, the pair $(g, \psi)$ gives rise to an ambitoric structure of hyperbolic type, in the sense of {\rm \cite{ACG2}}, relative to the conformal class of $g$, which, on any simply-connected open subset of $\mathcal{U}$,    is  described by 
{\rm (\ref{A-g})--(\ref{A-J})--(\ref{A-omega})}, where the Hermitian structures 
$(g _+ = (x + y) ^{-2} \, g, J _+, \omega _+)$ and $(g _- = (x - y) ^{-2} g, 
J _-, \omega _-)$ are K\"ahler,  whereas $\psi$ is described by {\rm (\ref{A-psi})}. 

Conversely, on the open set,  $\mathcal{U}$,  of $\mathbb{R} ^4$, of coordinates $x, y, s, t$, with $x > |y| > 0$, the  two almost Hermitian structures 
$(g _+ = (x + y) ^{-2} \, g, J _+, \omega _+)$, 
$(g _- = (x - y) ^{-2} g, J _-, \omega _-)$ defined by {\rm (\ref{A-g})--(\ref{A-J})--(\ref{A-omega})}, with $A (x) > 0$ and $B (y) > 0$,  are    
K\"ahler and, together with the Killing vector fields $K _1 = \frac{\partial}{\partial s}$ and $K _2 = 
\frac{\partial}{\partial t}$, constitute an ambitoric structure of hyperbolic type, whereas the $2$-form $\psi$ defined by {\rm (\ref{A-psi})} is $*$-Killing with respect to $g$. 
\end{thm}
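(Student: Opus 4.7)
The forward statement is essentially a synthesis of the work carried out in Section \ref{sambitoric}. On the open set $\mathcal{U}$, one has $\mathcal{U}\subset M_1 = M_0\cap M_S$, and by assumption $dx, dy$ are independent everywhere. Proposition \ref{prop-separate} then provides the separation $|dx|^2 = A(x)/(x^2-y^2)$, $|dy|^2 = B(y)/(x^2-y^2)$ with $A,B>0$; Proposition \ref{prop-t} supplies a (local) function $t$ defined by $dt = -J_+ d\varphi_+ = -J_-d\varphi_-$, up to an additive constant. The observation in (\ref{deta})--(\ref{ds}) that $(x^2-y^2)\eta - \tfrac12(x^2+y^2)\,dt$ is closed yields a local function $s$ satisfying (\ref{ds-bis}). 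Equation (\ref{dxdydteta}) then guarantees that $(dx,dy,ds,dt)$ is a direct coframe on any simply-connected component of $\mathcal{U}$, and substituting (\ref{defJ+}) into (\ref{g-prim})--(\ref{omega-prim}) gives (\ref{A-g})--(\ref{A-J})--(\ref{A-omega}); the formula (\ref{A-psi}) for $\psi$ follows from (\ref{psi}) since $f_\pm = x\pm y$. Matching (\ref{A-g})--(\ref{A-omega}) to \cite[Proposition 13]{ACG2} in its hyperbolic normal form $q(z)=2z$ identifies the induced structure with an ambitoric one of hyperbolic type.

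\textbf{Converse direction.} I would start from the explicit data $A(x)>0$, $B(y)>0$ on the domain $\{x>|y|>0\}$ and verify the claim in three steps, relying as much as possible on results already at hand rather than on raw computation. First, I observe that (\ref{A-g})--(\ref{A-J})--(\ref{A-omega}) are exactly the expressions obtained from the general ambitoric Ansatz of \cite[Proposition 13]{ACG2} by substituting the hyperbolic normal form $q(z) = 2z$. Consequently $(g_+, J_+, \omega_+)$ and $(g_-, J_-, \omega_-)$ are two genuine K\"ahler structures, the complex structures $J_\pm$ induce opposite orientations, $g_\pm$ are conformal via $g_- = ((x-y)/(x+y))^2 g_+$, and $K_1 = \partial_s$, $K_2 = \partial_t$ are two commuting Killing vector fields, Hamiltonian for both $\omega_+$ and $\omega_-$ with momenta as in (\ref{mu}). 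This gives the ambitoric structure.

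\textbf{The $*$-Killing property.} Define $f_+ := x+y$ and $f_- := x-y$, both positive on $\mathcal{U}$. From the formulas (\ref{A-tau}) we read off $\tau(dx) = dx$ and $\tau(dy) = -dy$, so
\begin{equation*}
\tau(df_+) = \tau(dx+dy) = dx - dy = df_-,
\end{equation*}
which is precisely (\ref{taudf}); the identity $g = f_+^2 g_+ = f_-^2 g_-$ is visible from (\ref{A-g}), giving (\ref{g}) with $f = f_+/f_-$. The converse part of Proposition \ref{prop-ambikaehler} then yields a $*$-Killing 2-form with respect to $g$ given by $\widetilde\psi = f_+^3\,\omega_+ + f_-^3\,\omega_-$; substituting $f_\pm = x\pm y$ and the expressions (\ref{A-omega}), a short algebraic cancellation shows that $\widetilde\psi$ coincides with the 2-form $\psi$ defined by (\ref{A-psi}).

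\textbf{Main obstacle.} The only genuinely non-trivial point is establishing that the Ansatz (\ref{A-g})--(\ref{A-J})--(\ref{A-omega}) actually defines K\"ahler structures: a direct verification via the Newlander--Nirenberg integrability of $J_\pm$ and the closure of $\omega_\pm$ is computationally heavy. My plan sidesteps this by recognizing the formulas as the hyperbolic normal form of \cite[Proposition 13]{ACG2}, where the K\"ahler property has already been proven. Once the ambikähler data are in place, the remaining steps --- checking (\ref{f})--(\ref{taudf}) and identifying the resulting $*$-Killing form with (\ref{A-psi}) --- are purely algebraic and follow directly from the results of Sections \ref{sambikaehler} and \ref{sorthogonal}.
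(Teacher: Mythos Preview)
Your forward direction and your treatment of the $*$-Killing property in the converse are essentially identical to the paper's. The one genuine difference lies in how you establish that the Ansatz (\ref{A-g})--(\ref{A-J})--(\ref{A-omega}) defines K\"ahler structures. You defer entirely to \cite[Proposition 13]{ACG2}, whereas the paper gives a self-contained verification: it checks directly that $\omega_\pm$ are closed and non-degenerate, and then tests the integrability of $J_\pm$ by computing $d\beta_\pm$, $d\gamma_\pm$ for the $(1,0)$-forms $\beta_\pm = dx + iJ_\pm dx$, $\gamma_\pm = dy + iJ_\pm dy$ and showing these lie in the ideal generated by $\beta_\pm, \gamma_\pm$. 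Only afterwards does the paper invoke \cite{ACG2} to identify the resulting ambitoric structure as hyperbolic with $q(z)=2z$. Your shortcut is legitimate---the paper itself notes just before the theorem that the Ansatz coincides with the hyperbolic normal form of \cite{ACG2}---and it buys brevity at the cost of self-containedness; the paper's explicit computation, though heavier, makes the argument independent of the external reference and transparent about exactly which identities are being used.
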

\begin{proof} The first part results of the preceding discussion. 
For the converse, we first observe that 
the $2$-forms $\omega _+$ and $\omega _-$ defined by (\ref{A-omega}) are clearly closed and not degenerate.  To test the integrability of  the almost complex structures  $J _+$ and $J _-$ defined by (\ref{A-J}), we consider 
the complex $1$-forms: 
\begin{equation} \begin{split} & \beta _+  = d x + i \, J _+ dx = d x + i \frac{A (x)}{(x ^2 - y ^2)} \, (d s +  y ^2 \, dt), \\ & 
\gamma _+  = d y + i \, J _+ d y = d y + i \frac{B (y)}{(x ^2 - y ^2)} \, 
(d s + x ^2 \, dt),  \end{split} \end{equation}
which generate the space of $(1, 0)$-forms with respect to $J _+$. 
We then have:  
\begin{equation} \begin{split} d \beta _+&  = i \, \frac{\big((x ^2 - y ^2) \, A ' (x) + x \, A (x)\big)}{(x ^2 - y ^2)} \, d x \wedge (d s + y ^2 \, dt) \\ & 
\quad  + i \, \frac{2 y \, A (x)}{(x ^2 - y ^2)} \, d y \wedge (d s + x ^2 \, dt)
 \\ & \quad = \frac{\big(A' (x) - 2 x \, A (x)\big)}{A (x)} \, d x \wedge \beta _+ + \frac{2 y \, A (x)}{B (y)} \, dy \wedge \gamma _+, \\ 
d \gamma _+  & = i \, \frac{\big((x ^2 - y ^2) \, B ' (y) + 2 y  \, B (y)\big)}{(x ^2 - y ^2)}  \, d y \wedge (d s + x ^2 \, dt) \\ & \quad  - i \, \frac{2 x \, B (y)}{(x ^2 - y ^2)} \, d x \wedge (d s + y ^2 \, dt) \\ & \quad = 
\frac{\big(B' (y) + 2 y \, B (y)\big)}{B (y)} \, dy \wedge \gamma _+ - \frac{2 x \, B (y)}{A (x)} \, d x \wedge \beta _+,
\end{split} \end{equation}
which shows that $J _+$ is integrable. For $J _-$, we likewise consider 
the complex $1$-forms: 
\begin{equation} \begin{split} & \beta _- = d x + i \, J _- dx = \beta _+ = d x + i \frac{A (x)}{(x ^2 - y ^2)} \, (d s + y ^2 \, dt), \\ & 
\gamma _-  = d y + i \, J _- d y = d y -  i \frac{B (y)}{(x ^2 - y ^2)} \, 
(d s + x ^2 \, dt),  \end{split} \end{equation}
which generate the space of $(1, 0)$-forms with respect to $J _+$. We then get
\begin{equation} \begin{split} d \beta _- &  = d \beta _+ 
 \\ & \quad = \frac{\big(A' (x) - 2 x \, A (x)\big)}{A (x)} \, d x \wedge \beta _- -   \frac{2 y \, A (x)}{B (y)} \, dy \wedge \gamma _-, \\ 
d \gamma _-  & = - d \gamma _+  \\ & \quad = 
\frac{\big(B' (y) + 2 y \, B (y)\big)}{B (y)} \, dy \wedge \gamma _- 
+ \frac{2 x \, B (y)}{A (x)} \, d x \wedge \beta _-,
\end{split} \end{equation}
which, again,  shows that $J _-$ is integrable. It follows that the almost Hermitian structures $(g _+ = (x + y) ^{-2} \, g, J_+, \omega _+)$ and 
$(g _- = (x - y) ^{-2} \, g, J_-, \omega _-)$ 
are  both {\it K\"ahler} and thus determine an {\it ambik\"ahler structure} on $\mathcal{U}$. Moreover, the vector fields $\frac{\partial}{\partial s}$ and 
$\frac{\partial}{\partial t}$ are clearly Killing with respect to $g, g_+, g _-$, and satisfy:
\begin{equation} \begin{split} 
& \frac{\partial}{\partial s} \lrcorner \omega _+ = - \frac{d x + d y}{(x + y) ^2} = d \left(\frac{1}{x + y}\right), \quad 
\frac{\partial}{\partial s} \lrcorner \omega _- = 
\frac{- d x + d y}{(x - y) ^2} = d \left(\frac{1}{x - y}\right), \\ &
\frac{\partial}{\partial t} \lrcorner \omega _+ = 
- \frac{y ^2 \, d x + x ^2 \, d y}{(x + y) ^2} = - 
d \left(\frac{x y}{x + y}\right), \quad
\frac{\partial}{\partial t} \lrcorner \omega _- = 
- \frac{y ^2 \, d x - x ^2 \, d y}{(x - y) ^2} =  
d \left(\frac{x y}{x - y}\right), \end{split} \end{equation}
meaning that they are both Hamiltonian with respect to $\omega _+$ and $\omega _-$, with momenta given by (\ref{mu}) in Proposition \ref{prop-killingvf}.
This implies that $\frac{\partial}{\partial s}$ and $\frac{\partial}{\partial t}$ preserve the two K\"ahler structures
$(g _+, J_+, \omega _+)$ and $(g_-, J_-, \omega _-)$ and actually coincide with the vector field $K _1$ and $K _2$ respectively defined  in a more general context in Proposition \ref{prop-killingvf}.  We thus end up with an {\it ambitoric structure}, as defined in \cite{ACG2}. 
According to Theorem 3 in \cite{ACG2}, it is an 
{\it ambitoric structure  of hyperbolic type}, with ``quadratic polynomial'' $q (z) = 2 z$. To check that the $2$-form $\psi$ defined by (\ref{A-psi}) --- which is evidently closed --- is $*$-Killing with respect to $g$, denote by $f _+, f _-$ the positive functions on $\mathcal{U}$ defined by $f _+ = x + y$, $f _- = x - y$, so that $g _+ = f _+ ^{-2} \, g$, $g _- = f _- ^{-2} \, g$ and $\psi = f _+ ^3 \, \omega _+ + f ^3 _- \, \omega _-$; it then follows from (\ref{A-tau}) that $\tau (d f _+) = d f _-$, hence that $\psi$ is $*$-Killing by Proposition \ref{prop-ambikaehler}.
\end{proof} 

\section{Ambik\"ahler  structures of Calabi type} \label{scalabi}

The second case listed in Proposition \ref{prop-f}, which is considered in this section, can be made more explicit via the following proposition: 
\begin{prop} \label{prop-calabi} Let $(M, g)$ be a connected, oriented, Riemannian $4$-manifold admitting a non-parallel $*$-Killing $2$-form $\psi = \psi _+ + \psi _-$. In view of {\rm Proposition \ref{prop-f}}, assume that the open set $M _S$ --- where $\psi$ is non-degenerate --- is dense in $M$ and that the Killing vector fields $K _1, K _2$ defined by {\rm (\ref{K1-M})--(\ref{K2-M})} are related by $K _2 = c \, K _1$, for some non-zero real number $c$. Then, $c$ is positive and one of the following three cases occurs:
\begin{enumerate} 

\item $f _+ ({\sf x})  + f _- ({\sf x}) = 2 \sqrt{c}$, for any ${\sf x}$ in $M$, or

\item $f _+ ({\sf x}) - f _- ({\sf x}) = 2 \sqrt{c}$, for any ${\sf x}$ in $M$, or

\item $f _- ({\sf x})  - f _+ ({\sf x}) = 2 \sqrt{c}$, for any ${\sf x}$ in $M$,

\end{enumerate}
with the usual notation:  $f _+ = |\psi _+|/
\sqrt{2}$ and $f _- = |\psi _-|/\sqrt{2}$. 

\end{prop}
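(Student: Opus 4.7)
My plan starts from the identity $K_2 = \tfrac12 S(K_1)$ of Remark~\ref{rem-jelonek}, which turns the hypothesis $K_2 = cK_1$ into $S(K_1) = 2cK_1$, valid on all of $M$ by smoothness. Thus wherever $K_1$ does not vanish, $K_1$ is an eigenvector of $S$ for the eigenvalue $2c$. Since $K_1$ is a non-trivial Killing vector field (the opposite would force $\alpha \equiv 0$ and hence $\psi$ parallel, contrary to hypothesis), $\{K_1 \neq 0\}$ is open and dense, and the same is true of $M_1 = M_0 \cap M_S$, so $\mathcal{O} := M_1 \cap \{K_1 \neq 0\}$ is dense in $M$. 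On $M_1$, Remark~\ref{rem-jelonek} gives $S = -(x^2 - y^2)\tau + (x^2 + y^2)\,\mathrm{I}$, whose eigenvalues are $2y^2$ on $T^+$ and $2x^2$ on $T^-$. These are distinct since $x > |y|$ on $M_1$, so at each point of $\mathcal{O}$ either $c = x^2$ or $c = y^2$. By continuity and density the smooth function $(x^2 - c)(y^2 - c)$ then vanishes identically on $M$. Positivity of $c$ follows at once, since $x > |y| > 0$ on the non-empty set $M_1$ excludes both $c = 0$ and $c < 0$.

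The next step is to factor the vanishing relation. Using $x = (f_+ + f_-)/2 \ge 0$ and $\sqrt{c} > 0$, the strictly positive factor $f_+ + f_- + 2\sqrt{c}$ may be removed, rewriting $(x^2 - c)(y^2 - c) = 0$ as
\[
(f_+ + f_- - 2\sqrt{c})\,(f_+ - f_- - 2\sqrt{c})\,(f_+ - f_- + 2\sqrt{c}) = 0 \quad \text{on } M.
\]
Let $F_1, F_2, F_3$ denote the closed zero loci of the three factors (the three cases of the statement). Clearly $F_2 \cap F_3 = \emptyset$ because $c > 0$, while $F_1 \cap F_2 \subseteq \{f_+ = 2\sqrt{c},\, f_- = 0\}$ and $F_1 \cap F_3 \subseteq \{f_+ = 0,\, f_- = 2\sqrt{c}\}$ both lie outside $M_0$. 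Hence on the dense open set $M_0$ the three subsets are pairwise disjoint; moreover each is open in $M_0$, because at any $p \in F_1 \cap M_0$ one has $f_+(p) - f_-(p) \neq \pm 2\sqrt{c}$ and continuity propagates the identity $f_+ + f_- = 2\sqrt{c}$ to a neighbourhood.

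The delicate step is to show that every transition point $p^* \in F_i \cap F_j$ with $i \neq j$ is a zero of $K_1$. I treat the case $p^* \in F_1 \cap F_2$, so that $f_+(p^*) = 2\sqrt{c}$ and $f_-(p^*) = 0$; the other case is symmetric. Since $F_3$ stays away from $p^*$ by continuity, in a neighbourhood the factored equation reduces to $(f_+ - 2\sqrt{c})^2 = f_-^2$, an identity of smooth functions. If $df_+(p^*) \neq 0$, the level set $\{f_+ = 2\sqrt{c}\}$ is locally a smooth hypersurface through $p^*$, and the above identity forces the anti-self-dual conformal Killing $2$-form $\psi_-$ to vanish identically on it. But then $\nabla_X \psi_-(p^*) = (\alpha \wedge X^{\flat})_-(p^*) = 0$ for every $X$ tangent to this hypersurface, i.e.\ on a three-dimensional subspace of $T_{p^*}M$. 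A direct dimension count shows that when $\alpha(p^*) \neq 0$ the map $X \mapsto (\alpha \wedge X^{\flat})_-$ from $T_{p^*}M$ to $\Lambda^-_{p^*}M$ is surjective with one-dimensional kernel (spanned by $\alpha^{\sharp}$), so it cannot vanish on a three-dimensional subspace; hence $\alpha(p^*) = 0$. In the remaining subcase $df_+(p^*) = 0$, the identity $K_1 = J_+\mathrm{grad}_g f_+$ on the open neighbourhood $M_0^+$ of $p^*$ directly gives $K_1(p^*) = 0$. Either way $K_1(p^*) = 0$.

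To conclude, I invoke the classical fact that the zero locus $\{K_1 = 0\}$ of a non-trivial Killing vector field is a disjoint union of closed totally geodesic submanifolds of (even) codimension at least two, so $M \setminus \{K_1 = 0\}$ is open and connected. By the transition-point analysis the three sets $F_i \setminus \{K_1 = 0\}$ are pairwise disjoint, relatively closed in $M \setminus \{K_1 = 0\}$, and cover it; a finite disjoint closed cover of a connected space forces exactly one piece to be the whole, and continuity on $M$ yields the desired global identity. The main obstacle I anticipate is precisely the transition-point analysis of the third paragraph, since it rests on the rigidity statement for the kernel of $X \mapsto (\alpha \wedge X^{\flat})_-$, which in turn depends on the specific geometry of self-dual versus anti-self-dual $2$-forms in dimension four.
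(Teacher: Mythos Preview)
Your proof is correct and follows essentially the same strategy as the paper: derive an algebraic identity forcing one of three linear relations among $f_+, f_-$ to hold pointwise, then use the rigidity of $\psi_-$ via the conformal Killing equation and a rank argument for $X \mapsto (\alpha \wedge X^{\flat})_-$ to show that transitions between relations occur only where $K_1 = 0$, and conclude by connectedness of $M \setminus K_1^{-1}(0)$. Your derivation of the identity via the eigenvalues of $S$ on $M_1$ is slightly cleaner than the paper's --- which instead computes $\Psi(K_1)$, $\Psi(K_2)$ and uses $|df_+| = |df_-|$, picking up an extra factor $(f_+^2 - f_-^2)$ and hence a fourth closed set $F_S = \{f_+ = f_-\}$ to deal with --- but the substance of the transition-point analysis and the connectedness conclusion is the same.
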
 
\begin{proof} First recall that $(\Psi _+ + \Psi _-) \circ  (\Psi _+ - \Psi _-) = - (f _+ ^2 - f _- ^2) \, {\rm I}$. From (\ref{K2-M}) and $K _1 = J _+ {\rm grad} _g f _+ = J _- {\rm grad} _g f _-$, we then infer
\begin{equation} \begin{split} & \Psi (K _1) = - \frac{1}{2} {\rm grad} _g \left(f _+ ^2 + f _- ^2\right), \\ &
\Psi (K _2) = - \frac{1}{16} {\rm grad} _g 
\left(\big(f _+ ^2 - f _- ^2\big) ^2\right). \end{split} \end{equation}
On $M _S$, where $\Psi$ is invertible, the identity $K _2 = c \, K _1$ then reads:
\begin{equation} (f _+ ^2 - f _- ^2) d (f _+ ^2 - f _- ^2) = 4 c \, \big(d f _+ ^2 + d f _- ^2\big), \end{equation}
or, else:
\begin{equation} (f _+ ^2 - f _- ^2 - 4 \, c) \, d f _+ ^2 = (f _+ ^2 - f _- ^2 + 4 \, c) \, d f _- ^2. \end{equation}
Since $|d f _+| = |d f _-|$ on $M _0$, on $M _1 = M _0 \cap M _S$ we then get:
\begin{equation} f _+ ^2 (f _+ ^2 - f _- ^2 - 4 \, c) ^2 -  f _- ^2 (f _+ ^2 - f _- ^2 + 4 \, c) ^2 = 0.   \end{equation}
Since $M _1$ is dense this identity actually holds on the whole manifold $M$. It can be rewritten as
\begin{equation} (f _+ ^2 - f _- ^2) \big((f _+ + f _-) ^2 - 4 \, c \big) \big((f _+ - f _-) ^2 - 4 \, c \big) = 0; \end{equation}
this forces $c$ to be positive --- if not, $f _+ ^2 - f _- ^2$ would be identically zero --- and we eventually get the 
identity:
\begin{equation} \label{calabi-identity} \begin{split} (f _+ ^2 - f _- ^2)  (f _+ + f _- + 2 \sqrt{c}) 
(f _+ + f _- - 2 \sqrt{c}) (f _+ - f _- - 2 \sqrt{c}) (f _+ - f _- + 2 \sqrt{c}) = 0. \end{split} \end{equation}
Denote by $\tilde{M}$ the open subset of $M$ obtained by removing the zero locus $K _1 ^{-1} (0)$ of $K _1$ from $M$ (notice that $\tilde{M}$ is a connected, 
dense open subset of $M$, as $K _1 ^{-1} (0)$ is a disjoint union of totally geodesic submanifolds of codimension a least $2$).  It readily 
follows from (\ref{calabi-identity}) that $\tilde{M}$ is the union of the following four  {\it closed} subsets $\tilde{F} _0 := F _0 \cap \tilde{M}$, 
 $\tilde{F} _+ := F _+ \cap \tilde{M}$, $\tilde{F} _- := F _- \cap \tilde{M}$ and $\tilde{F} _S := F _S \cap \tilde{M}$ of $\tilde{M}$, where $F _0, F _+, F _-, F_S$ denote the four  closed subsets of $M$  defined by:
\begin{equation} \begin{split} & F _0 := \{{\sf x} \in M \, | \, f _+ ({\sf x}) + f _- ({\sf x}) = 2 \sqrt{c}\}, \\ & 
F _+ := \{{\sf x} \in M \, | \, f _+ ({\sf x}) - f _- ({\sf x}) = 2 \sqrt{c}\},  \\ &
F _- := \{{\sf x} \in M \, | \, f _- ({\sf x}) - f _+ ({\sf x}) = 2 \sqrt{c}\}, \\ & 
F _S := \{{\sf x} \in M \, | \, f _+ ({\sf x}) - f _- ({\sf x}) = 0\}. 
\end{split} \end{equation}
We now show that if the interior, $V$, of $\tilde{F} _0$ is non-empty then 
 $ \tilde F _0 =  \tilde M$ (and thus $F_0=M$ by density); this amounts to showing that the boundary $B := \bar{V} \setminus V$ of $V$ in $\tilde{M}$ is empty. If not, let ${\sf x}$ be any  element of $B$; then, ${\sf x}$ belongs to $\tilde{F} _0$, as $\tilde{F} _0$ is closed, and it also belongs to $\tilde F _+$ or $\tilde F _-$: otherwise, there would exist  an open neighbourhood of ${\sf x}$ disjoint from $\tilde F _+ \cup \tilde F _-$, hence contained in $\tilde F _0\cup \tilde F_S$; as $\tilde F _S$ has no interior, this neighbourhood would be contained in contained in $\tilde F _0$, which contradicts the fact that ${\sf x}$ sits on the boundary of $V$. Without loss, we may thus assume that ${\sf x}$ belongs to $ \tilde F _+$, so that $f _+ ({\sf x}) = 2 \sqrt{c}$ and $f _- ({\sf x}) = 0$; since $K _1 ({\sf x}) \neq 0$ --- by the very definition of $\tilde{M}$ --- $f _+$ is regular at ${\sf x}$, implying that the locus of $f _+ = 2 \sqrt{c}$ is a smooth hypersurface, $S$,  of $\tilde{M}$ near ${\sf x}$; moreover, since $\tilde{F} _+$ and $\tilde{F} _-$ are disjoint, $f _- = 0$ on $S$, meaning that $\Psi _- = 0$ on $S$; for any $X$ in $T _{\sf x} S$ we then have $\nabla _X \Psi _- = 0$. On the other hand, $\nabla _X \Psi =  (\alpha ({\sf x}) \wedge X) _-$, for any $X$ in $T _{\sf x} M$, cf. (\ref{nablaA+-}), and we can then choose $X$ in $T _{\sf x} S$ in such a way that $(\alpha ({\sf x}) \wedge X) _-$ be non-zero, 
hence $\nabla _X \psi _- \neq 0$, 
contradicting the previous assertion. We similarly show that $M = F _+$ or $M = F _-$ whenever the interior of $\tilde{F} _+$ or of $\tilde{F} _-$ is non-empty. 
\end{proof}
A direct consequence of Proposition \ref{prop-calabi} is that on the (dense) open subset $M _0$, the associated ambik\"ahler structure $(g _+ = f _+ ^{-2} \, g, J _+ = f _+ ^{-1} \, \Psi _+, \omega _+)$, 
$(g _- = f _- ^{-2} \, g = f ^2 \, g _+,  J _- = f _- ^{-1} \, \Psi _-, \omega _-)$, with $f = f _+/f _-$, satisfies 
\begin{equation} \tau (df) = - df \end{equation}
in the first case listed in Proposition \ref{prop-calabi}, and
\begin{equation} \tau (df) = df  \end{equation}
in the remaining two cases. The ambik\"ahler structure is then {\it of Calabi type}, according to the following definition, taken from \cite{ACG1}:

\begin{defi} \label{defi-calabi} {\rm An ambik\"ahler structure $(g_+, J _+, \omega_+)$, $(g_-, J_-, \omega_-)$, with $g _+ = f ^{-2} \, g _-$, 
is said to be {\it of Calabi type} if $df\ne 0 $ everywhere, and if there exists a non-vanishing  
vector field $K$, Killing  with respect to $g_+$ and $g _-$ and  Hamiltonian with respect to $\omega _+$ and $\omega _-$, which satisfies 
\begin{equation} \tau (K) = \pm \, K, \end{equation}
with $\tau = - J _+ J _- = - J _- J _+$. 
} \end{defi}
By  replacing the pair $(J _+, J _-)$ by the pair $(J_+, - J_-)$ if needed, we can assume, without loss of generality,  that $\tau (K) = - K$. In the following proposition, we recall some general facts concerning this class of ambik\"ahler structures, cf. e.g. \cite[Section 3]{ACG1}:

\begin{prop} \label{prop-K} For any ambik\"ahler structure of Calabi type, with $\tau (K) = - K$:

{\rm (i)} The Killing vector field $K$ is an eigenvector of the Ricci tensor, ${\rm Ric} ^{g_+}$, of $g _+$ and of the Ricci tensor, ${\rm Ric} ^{g_-}$, of $g_-$;
in particular, ${\rm Ric} ^{g_+}$ and ${\rm Ric} ^{g_-}$ are both $J _+$- and $J _-$-invariant;  
\smallskip

{\rm (ii)} the Killing vector field $K$ is a constant multiple of $J _- {\rm grad} _{g_-} f = J _+{\rm grad} _{g_+} \frac{1}{f}$.
\end{prop}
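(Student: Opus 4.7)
Since $K$ is Killing for both $g_+$ and $g_- = f^2 g_+$, the identity $\mathcal{L}_K g_- = 2 f K(f)\, g_+$ forces $K(f) = 0$. As $K$ is Killing \emph{and} Hamiltonian on each of the two K\"ahler manifolds, it is $J_\pm$-holomorphic (a short computation: $\mathcal{L}_K \omega_\pm = d\iota_K \omega_\pm = 0$, combined with $\mathcal{L}_K g_\pm = 0$, gives $\mathcal{L}_K J_\pm = 0$); in particular $K$ preserves $\tau = -J_+ J_-$ and the orthogonal splitting $TM = T^+ \oplus T^-$. The hypothesis $\tau(K) = -K$ places $K$ in $T^-$. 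Near any point where $K \neq 0$ the 2-plane $T^-$ is spanned by $K$ and $J_- K$, and $[K, J_- K] = (\mathcal{L}_K J_-) K + J_-[K, K] = 0 \in T^-$, so $T^-$ is involutive; Proposition~\ref{prop-involutive} then yields $\tau(df) = -df$, equivalently ${\rm grad}_{g_-} f \in T^-$.

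For (ii), the identity $J_-{\rm grad}_{g_-}f = J_+{\rm grad}_{g_+}(1/f)$ follows from $g_+ = f^{-2} g_-$ (which gives ${\rm grad}_{g_+}(1/f) = -{\rm grad}_{g_-} f$) together with $J_+ = -J_-$ on $T^-$, where this vector lives. Inside the 2-plane $T^-$, both $K$ and $J_-{\rm grad}_{g_-} f$ are $g_-$-orthogonal to ${\rm grad}_{g_-} f$---the former because $K(f) = 0$, the latter by $g_-$-skew-symmetry of $J_-$---so the two are collinear: $K = c \cdot J_- {\rm grad}_{g_-} f$ for some scalar function $c$. Equivalently, if $\mu_-$ is the $\omega_-$-momentum of $K$, then $d\mu_- = c\,df$; closedness of $d\mu_-$ forces $dc\wedge df = 0$, so $c = c(f)$. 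To promote $c(f)$ to a genuine constant, substitute $K = c(f) J_-{\rm grad}_{g_-}f$ into $\mathcal{L}_K g_- = 0$ to obtain
\[ c(f)\,\mathcal{L}_{J_-{\rm grad}_{g_-}f}\,g_- = - c'(f)\bigl(df \otimes (J_-{\rm grad}_{g_-}f)^{\flat} + (J_-{\rm grad}_{g_-}f)^{\flat} \otimes df\bigr); \]
the detailed analysis of Calabi-type K\"ahler metrics in \cite[Section~3]{ACG1}, or direct inspection of the local normal form recalled there, shows that the only scalar $c(f)$ compatible with this identity and with $K$ Killing is a constant, giving the claimed proportionality with a nonzero constant $c$.

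For (i), the K\"ahler property of $(g_\pm, J_\pm)$ gives $J_\pm$-invariance of ${\rm Ric}^{g_\pm}$ for free. For invariance under the other complex structure it suffices to show that $K$ is an eigenvector of ${\rm Ric}^{g_-}$: the Bochner identity for the Killing vector field reads ${\rm Ric}^{g_-}(K) = -\delta^{g_-}\nabla^{g_-} K$, and inserting the explicit form of $\nabla^{g_-} K$ coming from (ii) and the Calabi structure yields ${\rm Ric}^{g_-}(K) \in \mathbb{R}\cdot K$. Commutation of ${\rm Ric}^{g_-}$ with $J_-$ then forces $J_- K$ to be an eigenvector with the same eigenvalue, and since $\{K, J_- K\}$ spans $T^-$, the plane $T^-$ (and, by $g_-$-orthogonality plus symmetry of Ric, also $T^+$) is ${\rm Ric}^{g_-}$-invariant; because $\tau = \pm 1$ on $T^\pm$, ${\rm Ric}^{g_-}$ commutes with $\tau$ and therefore also with $J_+$. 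The argument for ${\rm Ric}^{g_+}$ is identical.

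The main obstacle is the promotion of $c = c(f)$ to a genuine constant in (ii): the simple closedness argument only produces functional dependence on $f$, and upgrading this requires the full Killing equation for $K$ (beyond its Hamiltonian character), controlled through the interplay between the $J_-$-invariant and $J_-$-anti-invariant parts of ${\rm Hess}_{g_-} f$, which is precisely the point where the Calabi (rather than merely ambik\"ahler) structure enters.
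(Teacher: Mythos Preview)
Your overall setup is sound and several steps are clean (the derivation of $\tau(df)=-df$ via involutivity of $T^-$ and Proposition~\ref{prop-involutive} is a nice alternative to the paper's route through the momentum relations). But the proof has a genuine gap exactly where you flag it: the constancy of $c$ in part~(ii). You write $K=c(f)\,J_-\mathrm{grad}_{g_-}f$ and then appeal to ``the detailed analysis of Calabi-type K\"ahler metrics in \cite[Section~3]{ACG1}, or direct inspection of the local normal form'' to conclude $c'=0$. That is not an argument; it outsources the crux of the proposition. Concretely, your displayed identity
\[
c(f)\,\mathcal L_{J_-\mathrm{grad}_{g_-}f}g_- \;=\; -c'(f)\bigl(df\otimes(J_-\mathrm{grad}_{g_-}f)^\flat+(J_-\mathrm{grad}_{g_-}f)^\flat\otimes df\bigr)
\]
has two unknowns: you do not yet know that $J_-\mathrm{grad}_{g_-}f$ is Killing (equivalently, that $\mathrm{Hess}_{g_-}f$ is $J_-$-invariant), so nothing prevents the left side from being a nonzero multiple of $df\odot J_-df$, absorbing a nonconstant $c(f)$. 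Invoking the Calabi Ansatz from \cite{ACG1} is circular, since that normal form is derived assuming a Killing field of the form $J_-\mathrm{grad}_{g_-}f$.

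The paper avoids this trap by reversing your order: it proves (i) \emph{first}, without any reference to (ii). From $\tau(K)=-K$ one has the pointwise identity $J_+-J_-=2\,K^\flat\wedge J_+K/|K|^2$; differentiating this and using only that $K=J_+\mathrm{grad}_{g_+}F(f)$ for some $F$, one obtains an explicit formula $\nabla^{g_+}K=\Phi_+(f)\,J_+-\Phi_-(f)\,J_-$. Bochner then gives $\mathrm{Ric}^{g_+}(K)=\mu\,K$ directly, hence (i). Only then does (ii) follow, and by a different mechanism than yours: the conformal change formula $\mathrm{Ric}_0^{g_+}=\mathrm{Ric}_0^{g_-}-2f^{-1}(\nabla^{g_-}df)_0$, together with the $J_\pm$-invariance of both Ricci tensors from (i), forces $\mathrm{Hess}_{g_-}f$ to be $J_-$-invariant, i.e.\ $J_-\mathrm{grad}_{g_-}f$ is itself Killing. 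Since $K$ is a function of $f$ times this Killing field, the coefficient must be constant. So the missing idea in your argument is precisely this detour through the conformal Ricci identity, which is what upgrades ``$J_-$-holomorphic'' (which you have) to ``Killing'' for $J_-\mathrm{grad}_{g_-}f$.
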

\begin{proof} By hypothesis, $K = J _+ {\rm grad}_{g_+}  z _+ = J _- {\rm grad} _{g_-} z _-$, for some real functions $z _+$ and $z _-$. Since $J_- K = - J_+ K$,  we infer ${\rm grad} _{g_+} z _+ = - {\rm grad} _{g_-} z _-$, hence
\begin{equation} d z _+ = - f ^{-2} \, d z _-. \end{equation}
Since $ df \neq 0$ everywhere, this, in turn,  implies that
\begin{equation} z _+ = F (f), \qquad z _- = G (f)  \end{equation}
for some real (smooth) functions $F, G$ defined on $\mathbb{R} ^{> 0}$ up to an additive constant and satisfying: 
\begin{equation} \label{Gprime} G' (x) = - x ^2 \, F' (x). \end{equation} 
Moreover, 
\begin{equation} \label{taudf-} \tau (df) = - df. \end{equation}
Since $K$ has no zero and satisfies $\tau (K) = - K$, we have
\begin{equation}  \label{J+J-} J _+ = \frac{K ^{\flat} \wedge J _+ K}{|K| ^2} + * 
 \frac{K ^{\flat} \wedge J _+ K}{|K| ^2}, \quad
J _- = - \frac{K ^{\flat} \wedge J _+ K}{|K| ^2} +  * 
 \frac{K ^{\flat} \wedge J _+ K}{|K| ^2}, \end{equation}
so that
\begin{equation} \label{J+J} J _+ -  J _- = \frac{2 \, K ^{\flat} \wedge J _+ K}{|K| ^2}, \end{equation}
%with the following notation: for any $1$-form $\alpha$ and any vector field $Z$, $\alpha \wedge Z$ denotes the anti-symmetric endomorphism defined by $(\alpha \wedge Z) (X) = \alpha (X) \, Z - g (Z, X) \, \alpha ^{\sharp_g}$, for any metric $g$ in the conformal class $[g_+] = [g_-]$, and $* (\alpha \wedge Z)$ denotes its image by the (conformally invariant) Hodge operator $*$. 
In (\ref{J+J-})--(\ref{J+J}), the dual $1$-form $K ^{\flat}$ and the square norm $|K| ^2$ are relative to any metric in $[g_+] = [g_-]$. For definiteness however, we agree that they are both relative to $g_+$. Since $g ^+ = f ^{-2} \, g_-$, we have:
\begin{equation} \nabla ^{g _+} _X J _- = J _- \frac{df}{f} \wedge X + \frac{d f}{f} \wedge J_- X. \end{equation}
By using (\ref{taudf}), we then infer from  (\ref{J+J}):
\begin{equation} \label{nabla+J+J} \begin{split} \nabla ^{g_+} _X (J _+ - J_-) & = - \nabla ^{g_+} _X J _- = J _+ \frac{df}{f} \wedge X -  \frac{df}{f} \wedge J_- X \\ & = \frac{2 \, \nabla ^{g_+} _X K ^{\flat} \wedge J _+ K + 2 \, K ^{\flat} \wedge J _+ \nabla ^{g_+} _X K}{|K| ^2} \\ & - \frac{X \cdot |K| ^2}{|K| ^2} \, (J_+ + J _-).
\end{split} \end{equation} 
By contracting with $K$, and by using $K ^{\flat} = F' \, J _+ df$ and $J _+ \nabla ^{g_+} _X K = \nabla _{J_+ X} K$ (as $K$ is $J _{\pm}$-holomorphic), we obtain
\begin{equation} \label{nabla+XK} \begin{split} \nabla ^{g_+} _X K & = - \frac{|K| ^2}{2 f \, F'} J_+ X + \frac{1}{2 f \, F'} \, \left(K ^{\flat} \wedge J _+ K \right) (X) \\ & 
+ \frac{1}{2} \frac{d |K| ^2}{|K| ^2} (X) \, K  + \frac{1}{2} \frac{J _+ d |K| ^2}{|K| ^2} (X) \, J _+ K. \end{split} \end{equation}
Since $K$ is Killing with respect to $g_+$, $\nabla ^{g_+} K$ is anti-symmetric; in view of (\ref{nabla+XK}), this forces $|K| ^2$ to be of the form
\begin{equation} \label{H} |K| ^2 = H (f), \end{equation}
for some (smooth) function $H$ from $\mathbb{R} ^{> 0}$ to $\mathbb{R} ^{> 0}$, hence
\begin{equation} \label{dKnorm} \frac{d |K| ^2}{|K| ^2} = \frac{H' (f)}{H (f)} \, df = - \frac{H' (f)}{H (f) F' (f)} \, J _+ K ^{\flat}. \end{equation} By substituting 
(\ref{dKnorm}) in \eqref{nabla+XK}, we eventually get the following expression of $\nabla ^{g_+} K$:
\begin{equation} \label{nabla+K} \nabla ^{g_+} K = \Phi _+ (f) 
\, J _+ - \Phi _- (f) \, J _-, \end{equation} 
with 
\begin{equation} \label{Phi} 
\Phi _+ = \frac{1}{4} \left(\frac{H' (f)}{F' (f)} - 
\frac{H (f)}{f \, F' (f)}\right), \quad \Phi _- = \frac{1}{4} 
\left(\frac{H' (f)}{F' (f)} + \frac{H (f)}{f \, F' (f)}\right). \end{equation}
Since $K$ is Killing with respect to $g_+$, it follows from the Bochner formula that 
\begin{equation} \label{RicK} {\rm Ric} ^{g_+} (K) = \delta \nabla ^{g_+} K, 
\end{equation}
%where ${\rm Ric}$ is viewed as an endomorphism of $TM$ via the metric $g_+$, 
whereas, from (\ref{nabla+K}) we get
\begin{equation} \label{nabla2K} \begin{split} \left(\nabla ^{g_+} \right) ^2 _{X, Y} K &= 
\Phi _+ ' \, df (X) \, J_+ (Y) - \Phi _- ' \, df (X) \, J_- (Y) \\ & - \Phi _- \, \left(\nabla ^{g_+} _X J _-\right) (Y), \end{split} \end{equation}
and, from $\nabla _X ^{g_+} J_- = [J _-, \frac{df}{f} \wedge X]$:
\begin{equation} \label{deltaJ+} \delta J _- = - 
\left(\sum _{i = 1} ^4 \nabla ^{g_+} _{e_i} J_-\right) (e_i) = - 2 J _+ \frac{df}{f}  = - \frac{2}{f F' (f)} \, K ^{\flat}. \end{equation}
By putting together (\ref{nabla+K}), (\ref{RicK}), (\ref{nabla2K}) and (\ref{deltaJ+}), we get
\begin{equation} \label{Riclambda} {\rm Ric} ^{g_+} (K) = \mu \, K, \end{equation}
with
\begin{equation} \mu = - \frac{\big(f \, \Phi ' _+ (f) + f \, \Phi ' _- (f) -  2 \, \Phi _-(f)\big)}{f \, F' (f)}. \end{equation}
Since the metric $g _+$ is K\"ahler with respect to $J _+$, in particular is $J_+$-invariant, (\ref{Riclambda}) implies that the two eigenspaces of ${\rm Ric} ^{g_+}$ are the space $\{K, J_+ K\}$  generated by $K$ and $J _+ K$ (where $J _- = J _+$)  and its orthogonal complement, 
$\{K, J_+ K\} ^{\perp}$ (where $J _- = - J _+$). It follows that ${\rm Ric} ^{g _+}$ is both $J_+$- {\it and} $J _-$-invariant. This establishes the part (i) of the proposition (it is similarly shown that ${\rm Ric} ^{g_-}$ is $J _+$- 
and $J_-$-invariant). 

Before proving  part (ii), we first recall the general transformation rules  of the curvature under a  conformal change of the metric. If $g$ and 
$\tilde{g} = \phi ^{-2} \, g$ are two Riemannian metrics in a same conformal class $[g]$ in any $n$-dimensional Riemannian manifold $(M, g)$, $n > 2$, then the scalar curvature, ${\rm Scal} ^{\tilde{g}}$, and the trace-free part, ${\rm Ric}_0 ^{\tilde{g}}$, of $\tilde{g}$ are related to the 
scalar curvature, ${\rm Scal} ^{g}$, and the trace-free part, 
${\rm Ric}_0 ^{g}$, of $g$ by
\begin{equation} \label{sconformal} {\rm Scal} ^{\tilde{g}} = \phi ^2 \, \big({\rm Scal} ^g - 2 (n - 1) \, \phi \, \Delta _g \phi - n (n - 1) \, |d \phi| ^2 _g\big), \end{equation}
and
\begin{equation} \label{rconformal} {\rm Ric} _0 ^{\tilde{g}} = {\rm Ric} ^g - (n - 2) \, \frac{\left(\nabla ^g d \phi\right) _0}{\phi}, \end{equation}
where $\left(\nabla ^g d \phi\right) _0$ is the trace-free part of the Hessian $\nabla ^g d \phi$ of $\phi$ with respect of $g$, cf. {\it e.g.} \cite[Chapter 1, Section J]{besse}.  Applying (\ref{rconformal}) to the conformal pair $(g _-, g _+ = f ^{-2} \, g _-)$, we get
\begin{equation} \label{Ricg+g-} {\rm Ric} _0 ^{g _+} = {\rm Ric} _0 ^{g _-} -  \frac{2 \, \left(\nabla ^{g_-} d f\right) _0}{f}. \end{equation}
Since ${\rm Ric} ^{g _+}$ and ${\rm Ric} ^{g _+}$ are both $J_+$- and $J_-$-invariant,  it follows that $\left(\nabla ^{g_-} d f \right)_0$ is $J _-$-invariant, as well as $\nabla ^{g_-} d f$, since all metrics in $[g_+] = [g_-]$ are $J_+$- and $J_-$-invariant. This means that the vector field ${\rm grad} _{g_-} f$ is $J_-$-holomorphic, hence that $J _- {\rm grad} _{g_-} f$ is Hamiltonian with respect to $\omega _-$, hence Killing with respect to $g _-$; since $J _- {\rm grad} _{g_-} f = \frac{1}{G' (f)} \, K$, we conclude that $G' (f)$ is constant, hence, by using (\ref{Gprime}), that $F (f)$ and $G (f)$ are of the form
\begin{equation} F (f) = \frac{a}{f} + b, \qquad G (f) = a \, f + c, \end{equation}
for a non-zero real constant $a$ and arbitrary real constants $b, c$. This, together with (\ref{taudf}), establishes  part (ii) of the proposition.
\end{proof}

\begin{thm} \label{thm-calabi} Let $(M, g)$ be a connected, oriented $4$-manifold admitting a non-parallel $*$-Killing $2$-form $\psi = \psi _+ + \psi _-$, satisfying the hypothesis of {\rm Proposition \ref{prop-calabi}}, corresponding to {\rm Case (2)} of {\rm Proposition \ref{prop-f}}.  Then, on the dense open set 
$M _0 \setminus K _1 ^{-1} (0)$ the associated ambik\"ahler structure is of Calabi type, with respect to the Killing vector field $K = K _1$, with $\tau (K) = - K$ in the first case of {\rm Proposition \ref{prop-calabi}} and $\tau (K) = K$ in the two remaining cases.

Conversely, let $(g _+, J _+, \omega _+)$, $(g _- = f ^2 \, g _+, J _-, \omega _-)$ be any ambik\"ahler structure of Calabi type  with non-vanishing Killing vector field $K$, defined on some oriented $4$-dimensional manifold $M$. If $\tau (K) = - K$, there exist, up to scaling\footnote{For any positive constant $c$, the pairs $(g, \psi)$ and $(c \, g, c \, \psi)$ induce the same ambik\"ahler structure.},  a unique metric $g$ in the conformal 
class $[g_+] = [g_-]$ and a unique non-parallel  $*$-Killing $2$-form $\psi$  with respect to $g$,   inducing the given ambik\"ahler structure. If $\tau (K) = K$, such a pair $(g, \psi)$ exists and is unique outside the locus $\{f = 1\}$. 
\end{thm}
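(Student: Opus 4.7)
The plan is to treat the two directions separately, invoking Propositions~\ref{prop-calabi}, \ref{prop-K} for the ingredients and Propositions~\ref{prop-ambikaehler}, \ref{prop-kappa} for the machinery that inverts an ambikähler structure back into a $*$-Killing datum.

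For the forward direction, I would work case by case from Proposition~\ref{prop-calabi}. In sub-case (1), $f_+ + f_- = 2\sqrt{c}$ gives $df_+ = -df_-$, which combined with $\tau(df_+) = df_-$ from~\eqref{taudf} yields $\tau(df_+) = -df_+$. A routine check from $\tau = -J_-J_+$ (using that $J_+$ and $J_-$ coincide on $T^+$ and are opposite on $T^-$) shows that $T^\pm$ are both $J_+$- and $J_-$-invariant; hence $K_1 = J_+\,\mathrm{grad}_g f_+$ lies in $T^-$, i.e.\ $\tau(K_1) = -K_1$. Sub-cases (2) and (3) are handled symmetrically and yield $\tau(K_1) = K_1$. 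In every case the relation between $f_+$ and $f_-$ forces $df = 0 \iff df_+ = 0 \iff K_1 = 0$, so $df \neq 0$ throughout $M_0 \setminus K_1^{-1}(0)$. Combined with the assertion, already in Proposition~\ref{prop-killingvf}, that $K_1$ is Killing with respect to $g_\pm$ and Hamiltonian with respect to $\omega_\pm$, this verifies all the requirements of Definition~\ref{defi-calabi}.

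For the converse, Proposition~\ref{prop-ambikaehler} reduces existence of $(g,\psi)$ to producing non-constant positive $f_\pm$ with $f_+/f_- = f$ and $\tau(df_+) = df_-$; by Proposition~\ref{prop-kappa} this is in turn equivalent to exactness of $\kappa = \tau(df)/(1 - f^2)$. Here Proposition~\ref{prop-K}(ii) is the key input: $K$ is a constant multiple of $J_-\,\mathrm{grad}_{g_-} f$, so $\mathrm{grad}_{g_-} f = -J_- K/a$ inherits from $K$ its $\tau$-eigenvalue $\varepsilon$, and since $\tau$ is $g$-symmetric this translates into $\tau(df) = \varepsilon\, df$. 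Thus $\kappa = \varepsilon\, df/(1 - f^2)$ is exact — globally when $\varepsilon = -1$, only on $\{f \neq 1\}$ when $\varepsilon = +1$ — and formulas~\eqref{f-}, \eqref{f+} furnish explicit $f_\pm$. Proposition~\ref{prop-ambikaehler} then produces $(g, \psi)$ and a direct substitution into \eqref{psi} shows the induced ambikähler structure is the given one. Uniqueness up to scaling is read off the explicit formulas: the lone freedom is the positive constant $c$, which corresponds precisely to the homothety $(g,\psi) \mapsto (cg, c\psi)$ recalled in the footnote.

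I expect the principal technical point to be the sign matching between $\tau(K) = \varepsilon K$ and $\tau(df) = \varepsilon df$, which rests entirely on the $J_\pm$-invariance of the $\tau$-eigendistributions $T^\pm$; once this is in hand, Proposition~\ref{prop-K}(ii) delivers the implication in one line. The singular locus $\{f = 1\}$ in the $\varepsilon = +1$ case is intrinsic to formulas~\eqref{f+} (the primitive of $df/(1-f^2)$ blows up there) and accounts for the only asymmetry between the two parts of the statement.
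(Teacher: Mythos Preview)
Your proposal is correct and follows essentially the same route as the paper's own proof: the forward direction unpacks the three sub-cases of Proposition~\ref{prop-calabi} to read off $\tau(df_+)=\pm df_+$ and hence $\tau(K_1)=\mp K_1$, while the converse combines Proposition~\ref{prop-K}(ii) with Proposition~\ref{prop-kappa} and the explicit formulas~\eqref{f+}--\eqref{f-}. The only minor remark is that the reference for $K_1$ being Killing with respect to $g_\pm$ and Hamiltonian with respect to $\omega_\pm$ is more directly the end of the proof of Proposition~\ref{prop-killing} (equation~\eqref{K+-}) rather than Proposition~\ref{prop-killingvf}, but either citation suffices.
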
 
\begin{proof} The first  part of the proposition has already been discussed in the preceding part of this section. 
Conversely, let $(g _+, J _+, \omega _+)$, $(g _- = f ^2 \, g _+, J _-, \omega _-)$ be an ambik\"ahler structure of Calabi type, with respect to some non-vanishing Killing vector field $K$, with $\tau (K) = - K$ or $\tau (K) = K$. Then, according to Proposition \ref{prop-K}, $K$ can be chosen equal to
\begin{equation} K = J _+ {\rm grad} _{g_-} f = J _+ {\rm grad} _{g_+} \frac{1}{f}, \end{equation}
if $\tau (K) = - K$, or 
\begin{equation} K = J _+ {\rm grad} _{g_-} f = - J _+ {\rm grad} _{g_+} \frac{1}{f}, \end{equation}
if $\tau (K) = K$. According to Proposition \ref{prop-kappa} and   (\ref{f-}),  if $\tau (K) = - K$, hence $\tau (df) = - df$, the ambik\"ahler structure is then induced by the  metric $g$, in the conformal class 
$[g_+] = [g _-]$, defined by $g  = f _+ ^{-2} \, g _+ = f _- ^{-2} \, g _-$, 
with  
\begin{equation} \label{g-calabi+} f _+ = \frac{c \, f}{1 + f}, \quad f _- = 
\frac{c}{1 + f} = 
 c -  f _+, \end{equation}
for some positive constant $c$, and the $*$-Killing $2$-form $\psi$ defined by
\begin{equation} \label{psi-calabi+} \psi = \frac{f ^3}{(1 +  f) ^3} \, \omega _+ +   \frac{1}{(1 + f) ^3} \, \omega _-. \end{equation}
If  $\tau (K) = K$, hence $\tau (df) = df$, it similarly  follows from 
Proposition \ref{prop-kappa} and  (\ref{f+}) that 
the ambik\"ahler structure is  induced by the metric $g = f _+ ^2 \, g _+ = f _- ^2 \, g_-$, with
\begin{equation} \label{g-calabi-} f _+ = \frac{c \, f}{1 - f}, \quad f _- = \frac{c}{1 - f} = c + f ^+, \end{equation}
for some constant $c$, positive if $f < 1$, negative if $f > 1$, 
and the $*$-Killing $2$-form 
\begin{equation} \label{psi-calabi-} \psi  = \frac{f ^3}{(1 - f) ^3} \, \omega _+ + \frac{1}{(1 - f) ^3} \, \omega _-, \end{equation}
but the pair $(g, \psi)$ is only defined outside the locus $\{f = 1\}$. 
\end{proof}
\begin{rem} \label{rem-k} {\rm Any ambik\"ahler structure $(g _+, J _+, \omega _+)$, $(g _-, J _-, \omega _-)$ generates, up to global scaling, a $1$-parameter family 
of ambik\"ahler structures, parametrized by a non-zero real number $k$, obtained by, say,  fixing the first K\"ahler structure $(g _+, J _+, \omega _+)$ and  substituting $(g _- ^{(k)} = k ^{-2} \, g _- = f _k ^2  \, g _+, J _- ^{(k)} = \epsilon (k)  \, J _-, \omega _- ^{(k)} = \epsilon (k) \, k ^{-2} \, \omega _-)$ to the second one, 
with $\epsilon (k) = \frac{k}{|k|}$ and $f _k = \frac{f}{|k|}$. Assume that the ambik\"ahler structure  $(g _+, J _+, \omega _+)$, $(g _-, J _-, \omega _-)$ is of Calabi type, with $\tau (df) = - df$.  For any $k$ in $\mathbb{R} \setminus \{0\}$, we then have
$\tau ^{(k)} (d f _k) = - \epsilon (k) \, d f _k$, by setting $\tau ^{(k)} = - J _+ J _- ^{(k)} = - J ^{(k)} _- J _+ = \epsilon (k) \, \tau$, whereas, from (\ref{taudf+-}) we infer:
\begin{equation} f _+ ^{(k)} = \frac{f}{|k + f|}, \quad f _- ^{(k)} = \frac{|k|}{|k + f|}, \end{equation}
up to global scaling; the ambik\"ahler structure $(g _+, J _+, \omega _+)$, 
$(g _- ^{(k)}, J _- ^{(k)}, \omega _- ^{(k)})$ is then induced by the pair $(g ^{(k)}, \psi ^{(k)})$, where $g ^{(k)}$ is defined in the conformal class by
\begin{equation} g ^{(k)} = \frac{f ^2}{(k + f) ^2} \, g _+ = \frac{(1 + f) ^2}{(k + f) ^2} \, g,  \end{equation}
and $\psi ^{(k)}$ is the $*$-Killing $2$-form with respect to $g ^{(k)}$ defined by
\begin{equation} \psi ^{(k)} = \frac{f ^3}{|k + f| ^3} \, \omega _+ + \frac{k}{|k + f| ^3} \, \omega _-, \end{equation}
both defined outside the locus $\{f + k = 0\}$.

} \end{rem} 
\begin{rem} {\rm As observed in  \cite[Section 3.1]{ACG1}, any ambik\"ahler structure of Calabi type 
$(g _+, J _+, \omega _+)$, $(g _- = f ^2 \, g _+, J _-, \omega _-)$, with 
$\tau (df) = df$, admits a {\it Hamiltonian $2$-form}, $\phi ^+$, with respect to the K\"ahler structure $(g _+, J _+, \omega _+)$ and 
a Hamiltonian $2$-form, $\phi ^-$, with respect to the $(g _-, J _-, \omega _-)$, given by
\begin{equation} \label{phi+-} \phi ^+ = f ^{-1} \, \omega _+ 
+ f ^{-3} \, \omega _-, \qquad \phi ^- = f ^3 \, \omega _+ + f \, \omega _-. \end{equation}
} \end{rem}

\section{The decomposable case} \label{sdecomposable}

Assume now that $(M,g,\psi)$ is as in Case (3) in Proposition \ref{prop-f}, that is, 
that the $*$-Killing $2$-form $\psi = \psi _+ + \psi _-$ is degenerate (or decomposable).
This latter condition holds if and only if $\psi \wedge \psi = 0$, if and only if $|\psi _+| = |\psi _-|$, i.e. $f_+ = f _- =: \varphi$, or $f = 1$, meaning that $g _+ = g _- =: g _K$, whereas 
$g = \varphi ^2 \, g _K$.  Denote by $\nabla ^K$ the Levi-Civita connection of $g _K$. Then from \eqref{nJ+}--\eqref{nJ-} we get $\nabla ^K J _+ = \nabla ^K J _- = \nabla ^K \tau = 0$, which implies that $(M, g _K)$ is locally a K\"ahler product of two K\"ahler curves of the form
$M = (\Sigma, g _{\Sigma}, J _{\Sigma}, \omega _{\Sigma}) \times 
(\tilde{\Sigma}, g _{\tilde{\Sigma}}, J _{\tilde{\Sigma}}, \omega _{\tilde{\Sigma}})$, with 
\begin{equation} \begin{split} 
& g _K = g _{\Sigma} + g _{\tilde{\Sigma}}, \\ & 
 J _+ = J _{\Sigma} + J _{\tilde{\Sigma}}, \quad 
J _- = J _{\Sigma} - J _{\tilde{\Sigma}}, \\ & 
 \omega _+ = \omega _{\Sigma} + \omega _{\tilde{\Sigma}}, \quad 
\omega _- = \omega _{\Sigma} - \omega _{\tilde{\Sigma}}.
\end{split} \end{equation}
Moreover, from (\ref{Jdf}) we readily infer $\tau (d \varphi) = d \varphi$, meaning that $\varphi$ is the pull-back to $M$ of a function defined on  $\Sigma$. 
Conversely, for any K\"ahler product $M = (\Sigma, g _{\Sigma}, J _{\Sigma}, \omega _{\Sigma}) \times 
(\tilde{\Sigma}, g _{\tilde{\Sigma}}, J _{\tilde{\Sigma}}, \omega _{\tilde{\Sigma}})$ as above and for {\it any} positive function $\varphi$ defined on $\Sigma$, regarded as a function defined on $M$, the metric $g := \varphi ^2 \, (g_{\Sigma} + g_{\tilde{\Sigma}})$  admits a $*$-Killing $2$-form $\psi$, given by
\begin{equation} \psi = \varphi ^3 \, \omega _{\Sigma}, \end{equation} 
whose corresponding Killing $2$-form $* \psi$ is given by
\begin{equation} * \psi = \varphi ^3 \, \omega _{\tilde{\Sigma}}. \end{equation}
Note that by \eqref{2.2} $\alpha=\frac13\delta^g\psi=\frac1{\varphi^2}*_\Sigma d\varphi$, so $K_1=-\frac12\alpha^\sharp$ is not a Killing vector field in general.

The above considerations completely describe the local structure of 4-manifolds with decomposable $*$-Killing 2-forms. They also provide compact examples, simply by taking $\Sigma$ and $\tilde\Sigma$ to be compact Riemann surfaces. We will show, however, that there are compact 4-manifolds with decomposable $*$-Killing 2-forms which are not products of Riemann surfaces (in fact not even of K\"ahler type). They arise as special cases (for $n=4$) of the classification, in \cite{am}, of compact Riemannian manifolds $(M^n,g)$ carrying a Killing vector fields with conformal Killing covariant derivative. 

It turns out that if $\psi$ is a non-trivial $*$-Killing 2-form which can be written as $\psi=d\xi^\flat$ for some Killing vector field $\xi$ on $M$, then either $\psi$ has rank 2 on $M$, or $M$ is Sasakian or has positive constant sectional curvature (Proposition 4.1 and Theorem 5.1 in \cite{am}). For $n=4$, the Sasakian situation does not occur, and the case when $M$ has constant sectional curvature will be treated in detail in the next section. The remaining case --- when $\psi$ is decomposable --- is the one which we are interested in, and is described by cases 3. and 4. in Theorem 8.9 in \cite{am}. We obtain the following two classes of examples:
\begin{enumerate}
\item $(M,g)$ is a warped mapping torus 
$$M=(\mathbb{R}\times N)/_{(t,x)\sim(t+1,\varphi(x))},\qquad g=\lambda ^2d\theta ^2+g_N,$$
 where $(N,g_N)$ is is a compact
$3$--dimensional Riemannian manifold carrying a function $\lambda$, such that $d\lambda^\sharp$ is a conformal vector field, $\varphi$ is an isometry of $N$ preserving $\lambda$, $\xi =\frac{\partial}{\partial\theta}$ and $\psi=d\xi^\flat=2\lambda d\lambda\wedge d\theta$. One can take for instance $(N,g_N)=\mathbb{S}^3$ and $\lambda$ a first spherical harmonic. Further examples of manifolds $N$ with this property are given in Section 7 in \cite{am}.
\item $(M,g)$ is a Riemannian join $\mathbb{S}^{2}*_{\gamma,\lambda} \mathbb{S}^{1}$, defined as the smooth extension to $S^4$ of the metric 
$g=ds^2+\gamma^2(s){g_{\mathbb{S}^{2}}}+\lambda^2(s)d\theta ^2$ on 
$(0,l)\times {S}^2\times {S}^{1}$, where $l>0$ is a positive real number, $\gamma:(0,l)\to \mathbb{R}^+$ is a smooth function satisfying the boundary
conditions 
$$\gamma(t)=t(1+t^2a(t^2))\quad\hbox{and}\quad \gamma(l-t)=\frac{1}{c}+t^2b(t^2),\quad
\forall\  |t|<\epsilon,$$
for some smooth functions $a$ and $b$ defined on some interval $(-\epsilon,\epsilon)$, $\lambda(s):=\int_s^l \gamma(t)dt$,
$\xi=\frac{\partial}{\partial \theta}$ and $\psi=2\lambda(s)\lambda'(s) ds\wedge d\theta$. 
\end{enumerate}
In particular, we obtain infinite-dimensional families of metrics on 
$S ^3\times S ^1$ and on $S ^4$ carrying decomposable $*$-Killing 2-forms.

\section{Example: the sphere ${\mathbb S} ^4$ and its deformations} \label{ssphere}

We denote by ${\mathbb S} ^4:=(S^4,g)$ the $4$-dimensional sphere, embedded in the standard way in the Euclidean space $\mathbb{R} ^5$, equipped with the standard induced Riemannian metric, $g$, of constant sectional curvature $1$, namely the restriction to $\mathbb{S} ^4$ of the standard inner product $(\cdot, \cdot)$ of $\mathbb{R} ^5$.  We first recall the following well-known facts, cf. e.g. \cite{uwe}. Let $\psi = \psi _+ + \psi _-$ be any $*$-Killing $2$-form with respect to $g$, so that $\nabla _X \Psi = \alpha \wedge X$, cf. (\ref{nablapsi}). Since $g$ is Einstein, the vector field $\alpha ^{\sharp}$ is Killing and it follows from (\ref{nablaalphasharp})--(\ref{h}) that 
$\nabla \alpha = \psi$.  Conversely, for any Killing vector field $Z$ on 
${\mathbb S }^4$, it readily follows from the general {\it Kostant formula}
\begin{equation} \label{kostant} \nabla _X (\nabla Z) = {\rm R} _{Z, X}, \end{equation}
that, in the current case,  $\nabla _X (\nabla Z) = Z \wedge X$, so that the $2$-form $\psi := \nabla Z ^{\flat}$ is $*$-Killing with respect to $g$. The map 
$Z \mapsto \nabla Z ^{\flat}$ is then an isomorphism from the space of Killing vector fields on ${\mathbb S }^4$ to the space of $*$-Killing $2$-forms. 

It is also well-known that there is a natural $1-1$-correspondence between the Lie algebra $\mathfrak{so} (5)$ of anti-symmetric endomorphisms of $\mathbb{R} ^5$ and the space of Killing vector fields on ${\mathbb S} ^4$: for any ${\sf a}$ in $\mathfrak{so} (5)$, the corresponding Killing vector field, $Z _{\sf a}$, is defined by 
\begin{equation} Z _{\sf a} (u) = {\sf a} (u), \end{equation}
for any $u$ in ${\mathbb S} ^4$, where ${\sf a} (u)$ is viewed as an element of the tangent space $T _u {\mathbb S} ^4$, via the natural identification $T _u {\mathbb S} ^4 = u ^{\perp}$.

By combining the above two isomorphisms, we eventually obtained a natural identification of $\mathfrak{so} (5)$ with the space of $*$-Killing $2$-forms on 
${\mathbb S} ^4$ and it is easy to check that, for any ${\sf a}$ in $\mathfrak{so} (5)$, the corresponding  $*$-Killing $2$-form, $\psi _{\sf a}$, is given by
\begin{equation}  \label{psi-A} \psi _{\sf a} (X, Y) = ({\sf a} (X), Y), \end{equation}
for any $u$ in ${\mathbb S} ^4$ and any $X, Y$ in $T _u {\mathbb S} ^4 = u ^{\perp}$; alternatively, the corresponding endomorphism $\Psi _{\sf a}$ is given by
\begin{equation} \label{Psi-A} \Psi _{\sf a} (X) = {\sf a} (X) - ({\sf a} (X), u) \, u, \end{equation}
for any $X$ in $T _u {\mathbb S} ^4 = u ^{\perp}$.

Since, for any $u$ in ${\mathbb S} ^4$,  the volume form of ${\mathbb S} ^4$ is the restriction to 
$T _u {\mathbb S} ^4$ of the $4$-form $u \lrcorner {\rm v} _0$, where ${\rm v} _0$ stands for the standard volume form of $\mathbb{R} ^5$, namely ${\rm v} _0 = e _0 \wedge e _1 \wedge e _2 \wedge e _3 \wedge e _4$, for any direct frame of $\mathbb{R} ^5$ (here identified with a coframe via the standard metric), we easily check that, for any ${\sf a}$ in $\mathfrak{so} (5)$, the corresponding Killing $2$-form $* \psi _{\sf a}$ has the following expression 
\begin{equation} (* \psi _{\sf a}) (X, Y) = (u \lrcorner *_5 {\sf a}) (X, Y) = * _5 (u \wedge {\sf a}) (X, Y), 
 \end{equation}
for any $u$ in ${\mathbb S} ^4$ and any $X, Y$ in $T _u {\mathbb S} ^4 = u ^{\perp}$; here, 
$*_5$ denotes the Hodge operator on $\mathbb{R} ^5$ and we keep identifying vector and covectors via the Euclidean inner product. 

From (\ref{Psi-A}), we easily infer
\begin{equation} |\Psi _{\sf a}| ^2 = |{\sf a}| ^2 - 2 |{\sf a} (u)| ^2, \end{equation}
at any $u$ in ${\mathbb S} ^4$, where the norm is the usual Euclidean norm of endomorphisms, whereas the Pfaffian of $\psi _{\sf a}$ is given by:
\begin{equation} {\rm pf} (\psi _{\sf a}) := \frac{\psi _{\sf a} \wedge \psi _{\sf a}}{2 \, {\rm v} _g} 
 = \frac{(\psi _{\sf a}, * \psi _{\sf a})}{2}  = \frac{u \wedge {\sf a} \wedge {\sf a}}{2 \, {\rm v} _0}. \end{equation}
On the other hand, when $f _+, f _-$ are defined by (\ref{f+-}), we have
\begin{equation} |\Psi _{\sf a}| ^2 = 4 (f _+ ^2 + f _- ^2), \end{equation}
and 
\begin{equation} {\rm pf} (\psi _{\sf a})  = f _+ ^2 - f _- ^2. \end{equation}
For any ${\sf a}$ in $\mathfrak{so} (5)$, we may choose a direct orthonormal basis $e_0, e _1, e_2, e_3, e_4$ of $\mathbb{R} ^5$, with respect to which ${\sf a}$ has the following form
\begin{equation} {\sf a} = \lambda   \, e _1 \wedge e _2 +  \mu  \, e _3 \wedge e _4, \end{equation}
for some real numbers $\lambda, \mu$, with $0 \leq \lambda \leq \mu$.  Then, 
\begin{equation} \begin{split} & |{\sf a}| ^2 = 2 (\lambda  ^2 + \mu ^2), \\ &
{\sf a} (u) = \lambda  (u _1 e _2 - u _2 e_1) + \mu (u _3 e _4 - u _3 e_3), \\ & 
|{\sf a} (u)| ^2 = \lambda  ^2 (u _1 ^2 + u _2 ^2) + \mu  ^2 (u _3 ^2 + u _4 ^2), \\ &  
u \wedge {\sf a} \wedge {\sf a} = 2 \, \lambda  \, \mu \, u _0 \, e _0 \wedge e_1 \wedge e_2 \wedge e _3 \wedge e _4, \end{split} \end{equation}
 for any $u = \sum _{i = 0} ^4 u _i \, e _i$ in $\mathbb{S} ^4$. 
We thus get
\begin{equation} \label{f-sphere} \begin{split} & f _+ ^2 + f _-  ^2 
= \frac{1}{2} \, \big(\lambda  ^2 + \mu ^2 - \lambda  ^2 (u _1 ^2 + u _2 ^2) 
- \mu ^2 (u _3 ^2 + u _4 ^2)\big), 
\\ &
f _+ ^2 -  f _-  ^2 = \lambda  \mu \, u _0, \end{split} \end{equation}
hence
\begin{equation} \label{S4f} \begin{split}
& f _+ (u) = \frac{1}{2} \,  \big((\lambda  + \mu \, u _0) ^2 + (\mu ^2 
- \lambda  ^2) \, (u _1 ^2 + u _2 ^2)\big) ^{\frac{1}{2}} \\ & \quad =
\frac{1}{2} \, \big((\mu + \lambda  \, u _0) ^2 + (\lambda  ^2 - \mu ^2) \, (u _3 ^2 + u _4 ^2)\big) ^{\frac{1}{2}}, \\ \\ & f _- (u)  = \frac{1}{2} \,  
\big((\lambda  - \mu \, u _0) ^2 + (\mu ^2 - \lambda  ^2) \, (u _1 ^2 + u _2 ^2)\big) ^{\frac{1}{2}} \\ & \quad = 
\frac{1}{2} \, \big((\mu - \lambda \,  u _0) ^2 + (\lambda  ^2 - \mu ^2) \, (u _3 ^2 + u _4 ^2)\big) ^{\frac{1}{2}}.
\end{split} \end{equation}
From (\ref{f-sphere})--(\ref{S4f}), we easily obtain the following three cases, corresponding, in the same order, to  the three cases listed in  Proposition \ref{prop-f}:
\begin{enumerate}

\item[{\bf Case 1}]: ${\sf a}$ is of rank $4$ --- i.e. $\lambda$ and $\mu$ 
are both non-zero ---    and $\lambda < \mu$.  Then: 
\smallskip

\begin{enumerate} 
\item[(i)]  $f _+ (u) = f _- (u)$ if and only if $u$ belongs to the equatorial sphere ${\mathbb S} ^3$  defined by $u _0 = 0$;
\smallskip

\item[(ii)]  $f _+ (u) = 0$ if and only $u$ belongs to the circle ${\rm C} _+ = 
\{u _0 = - \frac{\lambda}{\mu}, u _1 = u _2 = 0\}$, and we then have $f _- (u) = \frac{\lambda}{2}$;  
\smallskip

\item[(iii)] $f _- (u) = 0$ if and only if $u$ belongs to the circle ${\rm C} _- = \{u _0 = \frac{\lambda}{\mu}, u _1 = u _2 = 0\}$, and we then have $f _+ (u) =  \frac{\lambda}{2}$;

\smallskip

\item[(iv)]   the $2$-form $d f _+ ^2 \wedge d f _- ^2$ is non-zero outside the $2$-spheres ${\rm S} ^2 _+ = \{u _1 = u _2 = 0\}$ and ${\rm S} ^2 _- = \{u _3 = u _4 = 0\}$; this is because
\begin{equation} \begin{split} d f _+ ^2 \wedge d f _- ^2 & = 
\frac{\lambda  \mu (\lambda  ^2 - \mu ^2)}{2}  \, d u _0 \wedge (u _1 \, d u _1 + u _2 \, d u _2) \\ \qquad & = \frac{\lambda  \mu (\mu ^2 - \lambda  ^2)}{2}  \, d u _0 \wedge (u _3 \, d u _3 + u _4 \, d u _4), \end{split} \end{equation}
which readily follows from (\ref{f-sphere}). 
\end{enumerate} 
\smallskip

\item[{\bf Case 2}]:  ${\sf a}$ is of rank $4$   and $\lambda  =  \mu$. Then
\begin{equation} f _+ (u) = \frac{\lambda}{2} (1 + u_0), \quad f _- (u) = 
\frac{\lambda}{2} (1 - u _0); \end{equation}
in particular,
\begin{equation} f _+ + f _- = \lambda;  \end{equation}
moreover, $f _+ (u) = 0$ if and only if $u = - e _0$ and $f _- (u) = 0$ if and only if $u = e _0$. 
\smallskip

\item[{\bf Case 3}]:  ${\sf a}$ is of rank $2$, i.e.  $\lambda  = 0$.
   Then, $f _+ - f _-$ is identically zero and
$f _+ (u)  = f _- (u)$ vanishes if and only if $u$ belongs to the circle ${\rm C} _0 = \{u _0 = u _1 = u _2 = 0\}$.
\end{enumerate}
\begin{rem} {\rm Consider the functions $x = \frac{f _+ + f _-}{2}, y = \frac{f_+ - f _-}{2}$ defined in Section \ref{sorthogonal}, as well as the functions of one variable, $A$ and $B$,  appearing in Proposition \ref{prop-separate}. 
 If ${\sf a}$ is of rank $4$, with $0 < \lambda  <  \mu$, corresponding to   {Case 1} in the above list, we easily infer from (\ref{f-sphere}) that
\begin{equation} \label{xyu} \begin{split} & u_0  = \frac{4 x  y }{\lambda    \mu }, \\ &  u _1 ^2 + u_2 ^2 = \frac{(\lambda ^2 - 4 x ^2) 
(\lambda ^2 - 4 y ^2)}{\lambda ^2 (\lambda ^2 - \mu ^2)}, \\ &  u _3 ^2 + u _4 ^2 =
 \frac{(\mu ^2 - 4 x ^2) (\mu ^2 - 4 y ^2)}{\mu ^2 (\mu ^2 - \lambda ^2)}. \end{split} \end{equation}
Since $x  \geq |y|$, the above identities imply that the image of $(x, y)$ in $\mathbb{R} ^2$ is the rectangle 
$R := \left[\frac{\lambda}{2}, \frac{\mu}{2}\right] \times \left[- \frac{\lambda}{2}, \frac{\lambda}{2}\right]$. A simple calculation then shows that 
$A$ and $B$ are given by
\begin{equation} \label{AB-sphere} A (z) = - B (z) = - 
\left(z ^2 - \frac{\lambda ^2}{4}\right) \left(z ^2 - \frac{\mu ^2}{4}\right).
 \end{equation}
Notice that $A (x)$ and $B (y)$ are positive in the interior of $R$,
 corresponding to the open set of $\mathbb{S} ^4$ where $dx, dy$ are independent,  and vanish on its  boundary. 
Also notice that the above expressions of $A, B$ fit  with the identities (\ref{scal-AB})--(\ref{b-AB}), with ${\rm Scal} = 12$ and $b = 0$. 
} \end{rem}

\begin{rem} \label{rem-deformation}  {\rm By using the ambitoric Ansatz in  Theorem \ref{thm-ambitoric}, the above situation can easily be deformed in Case 1, where  ${\sf a}$ is of rank $4$, with $0 < \lambda  <  \mu$,    and the $2$-form $\psi _{\sf a}$ defined by \eqref{psi-A} is $*$-Killing with respect to the round metric\footnote{We warmly thank Vestislav Apostolov for this suggestion.}. On the open set $\mathcal U = \mathbb{S} ^4 \setminus \left(S ^2 _+ \cup S ^2 _-\right)$, where $f_+\ne 0$, $f_-\ne 0$ and $df_+\wedge df_-\ne 0$, the round metric of $\mathbb{S} ^4$ takes the form \eqref{A-g}, where $A$ and $B$ are given by \eqref{AB-sphere}, $x\in \left(\frac{\lambda}{2}, \frac{\mu}{2}\right)$, $y\in\left( - \frac{\lambda}{2}, \frac{\lambda}{2}\right)$ are determined by \eqref{xyu} and $ds$, $dt$ are explicit exact  1-forms determined by the last two equations of \eqref{A-J}\footnote{It can actually be  shown that outside the $2$-spheres $S ^2 _+$ and $S ^2 _-$, $d s$ and $d t$ are given by:
\begin{equation} \begin{split} 
& d s = \frac{2}{\mu ^2 - \lambda ^2} \, \left(\lambda \, \frac{u _1 d u _2 - u _2 d u _1}{u _1 ^2 + u _2 ^2} - \mu \, \frac{u _3 d u _4 - u _4 d u _3}{u _3 ^2 + u _4 ^2}\right) = \frac{2}{\mu ^2 - \lambda ^2} \, d \left(\lambda \, \arctan{\frac{u _2}{u _1}} - \mu \, \arctan{\frac{u _4}{u _3}}\right), \\ &
d t = \frac{8}{\mu ^2 - \lambda ^2} \, \left(- \frac{1}{\lambda} \, \frac{u _1 d u _2 - u _2 d u _1}{u _1 ^2 + u _2 ^2} + \frac{1}{\mu}  \, \frac{u _3 d u _4 - u _4 d u _3}{u _3 ^2 + u _4 ^2}\right) = \frac{8}{\mu ^2 - \lambda ^2} \, d \left(- \frac{1}{\lambda} \, \arctan{\frac{u _2}{u _1}} +  \frac{1}{\mu} \, \arctan{\frac{u _4}{u _3}}\right). \end{split} \end{equation}}. Moreover, 
$\psi _{\sf a}$ is given by \eqref{A-psi} with respect to these coordinates.

Consider now a small perturbation $\tilde A$, $\tilde B$ of the functions $A$ and $B$ such that $\tilde A(x)=A(x)$ near $x=\frac{\lambda}{2}$ and $x= \frac{\mu}{2}$ and $\tilde B(y)=B(y)$ near $y=\pm\frac{\lambda}{2}$. If the perturbation is small enough, the expression analogue to \eqref{A-g}
\begin{equation} \label{A-tildeg} \begin{split} \tilde g & := (x ^2 - y ^2) \, \left(\frac{d x \otimes d x}{\tilde A (x)} + \frac{d y \otimes d y}{\tilde B (y)}\right) \\ & \quad 
+ \frac{\tilde A (x)}{(x ^2 - y ^2)} \, (d s + y ^2 \, dt) \otimes 
(d s +  \, y ^2 \, dt) \\ & \quad 
+ \frac{\tilde B (y)}{(x ^2 - y ^2)} \, (d s + x ^2 \, dt) \otimes 
(d s + x ^2 \, dt) \end{split} \end{equation}
is still positive definite so defines a Riemannian metric on $\mathcal U$, which coincides with the canonical metric on an open neighbourhood of 
$\mathbb{S} ^4 \setminus \mathcal{U} = S ^2 _+ \cup S ^2 _-$, and thus has a smooth extension to $\mathbb{S} ^4$ which we still call $\tilde g$. Since the expression \eqref{A-psi} of the $*$-Killing form in the Ansatz of Section \ref{sambitoric} does not depend on $A$ and $B$, the 2-form $\psi _{\sf a}$ is still $*$-Killing with respect to the new metric $\tilde g$. We thus get an infinite-dimensional family (depending on two functions of one variable) of Riemannian metrics on $S^4$ which all carry {\em the same} non-parallel $*$-Killing form. 
} \end{rem}

\section{Example: complex ruled surfaces} \label{shirzebruch}

In general, a (geometric) {\it complex ruled surface} is a compact, connected, complex manifold of the form $M = \mathbb{P} (E)$, where $E$ denotes a rank 2 holomorphic vector bundle over some  (compact, connected)  Riemann surface, $\Sigma$, and $\mathbb{P} (E)$ is then the corresponding projective line bundle, i.e. the holomorphic bundle over $\Sigma$, whose fiber at each point $y$ of $\Sigma$ is the complex projective line $\mathbb{P} (E _y)$, where $E _y$ denotes the fiber of $E$ at $y$.  A complex ruled surface is said to be {\it of genus} ${\bf g}$ 
if $\Sigma$ is of genus ${\bf g}$.

In this section,  we restrict our attention to complex ruled surfaces $\mathbb{P} (E)$ as above, when  $E = L \oplus \mathbb{C}$ is the Whitney sum of some holomorphic line bundle, $L$, over $\Sigma$ and of the trivial complex line bundle $\Sigma \times \mathbb{C}$, here simply denoted $\mathbb{C}$: $M$ is then the {\it compactification}  of the total space of $L$ obtained by adding the  {\it point at infinity} $[L _y] := \mathbb{P} (L _y \oplus \{0\})$ to  each fiber of $M$ over $y$. The union of the points at infinity is a divisor of $M$, denoted by $\Sigma _{\infty}$, whereas the (image of) the zero section of $L$, viewed as a divisor of $M$, is denoted $\Sigma _0$; both $\Sigma _0$ and $\Sigma _{\infty}$ are identified with $\Sigma$ by the natural projection, $\pi$, from $M$ to $\Sigma$. The open set $M \setminus (\Sigma _0 \cup \Sigma _{\infty})$, denoted $M ^0$, is naturally identified with $L \setminus \Sigma _0$.  We moreover assume that the degree, ${\rm d} (L)$,  of $L$ is {\it negative} and we set: 
${\rm d} (L) = - k$, 
where $k$ is a positive integer.

Complex ruled surfaces of this form will be called {\it Hirzebruch-like ruled surfaces}. When  ${\bf g} = 0$, these are exactly those complex ruled surfaces introduced by F. Hirzebruch in \cite{hirzebruch}. When ${\bf g} \geq 2$, they were named {\it pseudo-Hirzebruch} in \cite{christina}. 

In general, the K\"ahler cone of a complex ruled surface $\mathbb{P} (E)$ was described by A.  Fujiki in \cite{fujiki}. In the special case considered in this section, when $M = \mathbb{P} (L \oplus \mathbb{C})$ is a Hirzebruch-like ruled surface,   if $[\Sigma _0]$, $[\Sigma _{\infty}]$ and $[F]$ denote the Poincar\'e duals of the (homology class of) $\Sigma _0$, $\Sigma _{\infty}$ and of any fiber $F$ of $\pi$ in ${\rm H ^2} (M, \mathbb{Z})$, the latter is freely generated by $[\Sigma _0]$ and $[F]$ or by $[\Sigma _{\infty}]$ and $[F]$, with
$[\Sigma _0] = [\Sigma _{\infty}] - k \, [F]$, 
and the K\"ahler cone is the set of those elements, $\Omega _{a _0, a _{\infty}}$, of ${\rm H} (M, \mathbb{R})$  which are of the form
$\Omega _{a_0, a _{\infty}} = 2 \pi \, \big(- a _0 \, [\Sigma _0] + a _{\infty} \, [\Sigma _{\infty}]\big)$, 
for any two real numbers $a _0, a _{\infty}$ such that
$ 0 < a _0 < a _{\infty}$.

We assume that $\Sigma$ comes equipped with a K\"ahler metric $(g _{\Sigma}, \omega _{\Sigma})$ polarized by $L$, in the sense that $L$ is endowed with a Hermitian (fiberwise) inner product, $h$, in such a way that the curvature, ${\rm R} ^{\nabla}$,  of the associated Chern connection, $\nabla$, is related to the K\"ahler form $\omega _{\Sigma}$  by ${\rm R} ^{\nabla} = i \, \omega$; in particular, $[\omega _{\Sigma}] = 2 \pi \, c _1 (L ^*)$, where 
$[\omega _{\Sigma}]$ denotes the de Rham class of $\omega _{\Sigma}$, $L ^*$ the dual line bundle to $L$ and $c _1 (L ^*)$ the (de Rham) Chern class of $L ^*$. 
The natural action of $\mathbb{C} ^*$ extends to a holomorphic $\mathbb{C} ^*$-action on $M$, trivial on $\Sigma _0$ and $\Sigma _{\infty}$; we denote by $K$ the generator of the restriction of this action on $S ^1 \subset \mathbb{C} ^*$. 
On $M ^0 = L \setminus \Sigma _0$, we denote by $t$ the function defined by 
\begin{equation} \label{t} t = \log{r}, \end{equation}
where $r$ stands for the distance to the origin in each fiber of $L$ determined by $h$;  on $M ^0$, we then have
\begin{equation} d d ^c t = \pi ^* \omega _{\Sigma}, \qquad d ^c t (K) = 1 \end{equation}
(beware: the function $t$ defined by (\ref{t}) has nothing to do with the local coordinate $t$ appearing in Section \ref{sambitoric}). 
Any (smooth) function $F = F (t)$ of $t$ will be regarded as function defined on $M ^0$, which is evidently $K$-invariant; moreover:
\begin{enumerate} \label{asymptotic} 

\item    $F = F (t)$ smoothly extends to $\Sigma _0$  if and only if
$F (t) = \Phi _+ (e ^{2t})$  near $t = - \infty$, for some smooth function $\Phi _+$ defined on some neighbourhood of $0$ in $\mathbb{R} ^{\geq 0}$,  and

\item  $F = F (t)$ smoothly extends to $\Sigma _{\infty}$ if and only if $F (t) = \Phi _- (e ^{-2t})$ near 
$t = \infty$,  for some smooth function $\Phi _-$ defined   
on some neighbourhood of $0$ in $\mathbb{R} ^{\geq 0}$, cf. {\it e.g.} \cite{christina}, \cite[Section 3.3]{ACG1}.
\end{enumerate}

For any (smooth) real function $\varphi = \varphi (t)$, denote by $\omega _{\varphi}$ the real, $J$-invariant $2$-form defined on $M ^0$ by
\begin{equation} \label{omegaphi} \omega _{\varphi} = \varphi  \, d d ^c t + \varphi '  \, dt \wedge d ^c t, \end{equation}
where $\varphi '$ denotes the derivative of $\varphi$ with respect to $t$. 
Then,  $\omega _{\varphi}$ is a K\"ahler form on $M ^0$, with respect to the natural complex structure $J = J _+$, of $M$,  if and only if $\varphi$ is positive and increasing as a function of $t$; moreover, $\omega _{\phi}$ extends to a smooth K\"ahler form on $M$, in the K\"ahler class $\Omega _{a _0, a _{\infty}}$,  if and only if $\varphi$ satisfies the above asymptotic conditions (1)--(2), with $\Phi _+ (0) = a _0 > 0$, $\Phi _+ ' (0) > 0$, $\Phi _- (0) = a _{\infty} > 0$, $\Phi _- ' (0) < 0$. K\"ahler forms of this form on $M$, as well as the corresponding K\"ahler metrics 
\begin{equation} g _{\varphi} = \varphi  \, \pi ^* g _{\Sigma} + \varphi '  \, (d t \otimes d t + d ^c t \otimes d ^c t), \end{equation}
are called {\it admissible}. 

Denote by $J _-$ the complex structure, first defined on the total space of $L$ by keeping $J$ on the horizontal distribution determined by the Chern connection and by substituting $- J$ on the fibers,  then smoothly extended to $M$. The new complex structure induces the opposite orientation, hence commutes with $J _+ = J$. 

Any admissible K\"ahler form $\omega _{\varphi}$ is both $J _+$- and $J _-$-invariant, as well as the associated $2$-form $\tilde{\omega} _{\varphi}$ defined by
\begin{equation} \tilde{\omega} _{\varphi} := \frac{1}{\varphi} \, d d ^c t - 
\frac{\varphi '}{\varphi ^2} \, d t \wedge d ^c t, \end{equation}
which is moreover K\"ahler with respect to $J _-$, with metric
\begin{equation} \tilde{g} _{\varphi} = \frac{1}{\varphi ^2} \, g _{\varphi}. \end{equation}
We thus obtain an ambik\"ahler structure of Calabi-type, as defined in Section \ref{scalabi}, with $f = \frac{1}{\varphi}$ and $\tau (K) = - K$. According to Theorem \ref{thm-calabi} and Remark \ref{rem-k}, for any $k$ in $\mathbb{R} \setminus \{0\}$, the metric $g ^{(k)}$ defined, outside the locus $\{1 + k \, \varphi = 0\}$,  by
\begin{equation} g ^{(k)}_{\varphi}  = \frac{1}{(1 + k \, \varphi) ^2} \, g _{\varphi}, \end{equation}
there admits a non-parallel $*$-Killing $2$-form $\psi ^{(k)} _{\varphi}$, namely 
\begin{equation} \begin{split} \psi ^{(k)}  _{\varphi} & = \frac{1}{(1 + k \, \varphi) ^3} \, \omega _{\phi} + \frac{k \, \varphi ^3}{(1 +  k \, \varphi) ^3} \, \tilde{\omega} _{\varphi} \\ & = \frac{\varphi}{(1 + k \, \varphi) ^2} \, d d ^c t + \frac{(1 - k \, \varphi) \varphi '}{(1 +  k \, \varphi) ^3} \, d t \wedge d ^c t. \end{split} \end{equation}
Notice that the pair $(g ^{(k)} _{\varphi}, \psi ^{(k)} _{\varphi})$ smoothly extends to $M$ 
for any $k\in\mathbb{R}\setminus[- \frac{1}{a _0}, - \frac{1}{a _{\infty}}]$, including $k = 0$ for which  we simply get the K\"ahler pair $(g _{\varphi}, \omega _+)$.

\end{document}